\documentclass[12pt]{article}
\usepackage{tikz}
\usepackage{amsmath,amsfonts,amssymb,graphicx}
\usepackage{mathrsfs}
\usepackage[latin1]{inputenc}
\usepackage{multicol}
\usepackage[all]{xy}
\usepackage{amsthm}
\usepackage{tcolorbox}
\usepackage{color}
\usepackage{cancel}
\usepackage[misc]{ifsym}
\usepackage[bookmarksnumbered,colorlinks]{hyperref}
\usepackage{mathrsfs}
\usetikzlibrary{matrix}
\usepackage{epstopdf}
\usepackage{pstricks}
\usepackage{pst-plot}
\usepackage{pstricks-add}
\usepackage{epsf}
\usepackage{epic}
\usepackage{subfig}
\usepackage{fancyhdr}
\usepackage{fancybox}
\usepackage{boxedminipage}
\usepackage{shadow}
\usepackage{anysize}
\usepackage{extramarks}
\usepackage{fancybox}
\usepackage{multicol}
\usepackage{comment}
\usepackage{xcolor}
\usepackage{newtxmath}
\usepackage{tikz}
\usepackage{calc}
\usepackage{graphicx}
\usepackage{imakeidx}
\usepackage{diagbox}
\usepackage{caption}

\newcommand{\dst}{\displaystyle}
\newcommand{\mb}{\mathbb}

\newcommand{\wangle}{\widetilde{\measuredangle}}
\newcommand{\mangle}{\measuredangle}

\usepackage[top=3cm,bottom=3cm,hmargin=2.5cm]{geometry}

\newtheorem{theorem}{Theorem}
\newtheorem{proposition}[theorem]{Proposition}
\newtheorem{lemma}[theorem]{Lemma}%

\newtheorem{example}{Example}%
\newtheorem{remark}{Remark}%

\newtheorem{definition}{Definition}%

\raggedbottom

\begin{document}

\title{Comparison theorems for Lorentzian length spaces with lower timelike curvature bounds}

\author{Waldemar Barrera, Luis Montes de Oca and Didier A. Solis}
\date{}




\maketitle

\abstract{In this article we introduce a notion of normalized angle for Lorentzian pre-length spaces. This concept allows us to prove some equivalences to the definition of timelike curvature bounds from below for Lorentzian pre-length spaces. Specifically, we establish some comparison theorems known as the local Lorentzian version of the Toponogov theorem and the Alexandrov convexity property. Finally, as an application we obtain a first variation Formula for non-negatively curved globally hyperbolic Lorentzian length spaces.} 

\bigskip

\textbf{Keywords:} {Lorentzian length spaces, triangle comparison, spaces of bounded curvature, first variation.}


\textbf{MSC Classification:} {53C23, 53C50, 53C80.}

\section{Introduction}

The beginning of the XXI century has brought a lot of excitement and whole new perspectives both to Mathematical Relativity and Lorentzian geometry. In particular,  the first detection of gravitational waves \cite{LIGO} and the latest observations of black holes \cite{BH} have boosted the interest of exploring geometric tools that adapt well to non-smooth settings. 

In the context of Lorentzian geometry, the search for an axiomatic approach to the most remarkable aspects of relativity --such as causality--- can be traced back to the fundamental work of Kronheimer and Penrose \cite{KP}. In their approach, the main features of causality theory could be established from a small set of axioms rather than deduced from the smooth geometry structure of spacetime. Thanks to Penrose's insight, such idealizations have provided effective foundations to deal with situations where smoothness is not required, such as in the study of quantum gravity \cite{Surya} and more recently in the novel field of Lorentzian length spaces \cite{Kunzinger}.

 On the other hand, we have witnessed in the past decade a renewed interest for synthetic geometric methods, arising from their extensive use in scenarios where tools stemming from differential geometry are not available.  Indeed, several classical results from Riemannian geometry have been extended to the more general scope of length spaces. Basically, a length space is a metric space where the distance between any two points can be approached by the length of curves joining them. Remarkably, for geodesic length spaces we are able to define a synthetic notion of curvature by comparing geodesic triangles with triangles in a suitable space form of constant curvature (see \cite{Bridson,Burago,Plaut,Shiohama} and references therein).   The first attempts to establish a comparison theory for Lorentzian manifolds dates back to the work of Harris in the proof of Toponogov's Splitting Theorem \cite{Harris}.  In their seminal work \cite{Kunzinger}, Kunzinger and S\"amann developed a synthetic notion of (timelike) curvature bounds in a Lorentzian non-smooth context and used it to explore the nature of singularities. More recently, in \cite{Felix} we find the first detailed study of Lorentzian comparison theory, as well as further developments and techniques in the context of Lorentzian pre-length spaces. Moreover, in \cite{FS} the main analytic tools related to comparison theorems (such as properties of the exponential map) are laid out, along with fundamental results pertaining hyperbolic angles, most notably, the triangle inequality. In the present paper we lay out the basic tools to deal with comparisons in Lorentzian length spaces in a synthetic manner analog to the well established theory of Alexandrov or CAT spaces, as well as some global results. In particular, we discuss three equivalent notions of curvature bounds and a first variation formula for Lorentzian length spaces bounded from below by $0$. We hope these results will prove helpful in the endeavor of applying synthetic geometric methods in Relativity. 

The paper is organized as follows: in section \ref{sec:pre} we set the main definitions pertaining Lorentzian length spaces and fix the notation we will be using throughout this work. In section \ref{sec:comp} we recall the definition of bounded timelike curvature. After revising in section \ref{sec:nonnorm} the notion of non-normalized angle due to Alexander and Bishop \cite{Bishop}, we establish the equivalence of timelike curvature bounds and Alexandrov's convexity property in section \ref{sec:Alex}. On section \ref{sec:Toponogov} we define the notion of angle in Lorentzian pre-length spaces and use it to discuss the relation of timelike curvature bounds and the local Lorentzian version of Toponogov's property. Finally, in section \ref{sec:firstvar} we prove a global first variation formula for non-negative curvature bounded Lorentzian length spaces.


\section{Preliminaries}\label{sec:pre}

Throughout this section we will recall some basic notions in the context of Lorentzian length spaces as defined in \cite{Kunzinger}. The first ingredient consist in an axiomatic formulation of causality, close in spirit to the original definition of causal spaces first proposed in \cite{KP}.

\begin{definition}\label{defi:prel}
A \emph{Lorentzian pre-length space} is a quintuple $(X,d,\ll,\leq, \tau)$ where  
\begin{enumerate}
\item $(X,d)$ is a metric space,
\item $\leq$ is a pre-order,
\item  $\ll$ is a transitive relation contained in $\leq$.
\item $\tau:X\times X \to [0,\infty]$ is lower semi-continuous function satisfying
\begin{itemize}
    \item $\tau(x,z)\geq \tau(x,y) + \tau(y,z)$ for all $x\leq y \leq z$
    \item $\tau(x,y)>0$ if and only if $x\ll y$.
\end{itemize}
\end{enumerate}
\end{definition}

If a pair of points satisfy $x\ll y$, ($x\le y$) we say that $x$ and $y$ are \emph{chronologically} (\emph{causally}) \emph{related}, respectively. \emph{Chronological} (\emph{causal}) \emph{future} and \emph{past sets} $I^+(x)$, $I^-(x)$ ($(J^+(y)$, $J^-(y)$) are thus defined in the standard way. The function $\tau$ is called a \emph{time separation} function.

Lorentzian pre-length spaces have just enough structure in order to establish some of the most basic facts pertaining causality. Most notably, the so called \emph{push-up property}  (if $x\ll y\le z$ or $x\le y\ll z$ then $x\ll z$) and the openness of the chronological sets $I^\pm (x)$.

A curve $\gamma:[a,b]\to X$ that is not constant on any subinterval of $[a,b]$ is called \emph{future-directed timelike} (\emph{causal}) if $\gamma$ is locally Lipschitz continuous with respect to $d$ and whenever $s,t\in [a,b]$ with $s<t$ then $\gamma(s)\ll \gamma(t)$ ($\gamma(s)\leq \gamma(t)$). Such curve is \emph{future-directed null} if it is causal and no pair of points on the curve are timelike related. Past directed curves are defined similarly. 

In order to have a sensible notion of length of causal curves, we rely on the time separation function. Thus, we define the $\tau-$\emph{length} of a future-directed causal curve $\gamma$ as 
\[
L_{\tau}(\gamma)= \inf\left\{ \displaystyle \sum_{i=0}^{n-1} \tau(\gamma(t_i),\gamma(t_{i+1})) : a=t_0<t_1<\cdots <t_n = b, n\in\mathbb{N} \right\}.
\]
where the infimum is taken over all possible partitions of $\gamma$. A future-directed causal curve $\gamma$ is \emph{maximal} if $L_{\tau}(\gamma)=\tau(\gamma(a),\gamma(b))$. Maximal curves are cornestone to synthetic geometry, as they are the closest analogs to geodesics.

Essentially, a \emph{Lorentzian length space} $(X,d,\ll,\leq, \tau)$ is a Lorentzian pre-length space with a local structure that resemble the one provided by normal neighborhoods and whose time separation function can be recovered from the $\tau$-length of curves. The former is achieved through the notions of \emph{localizing neighborhoods}: that is, neighborhoods $\Omega_x$ around each point $x\in X$ furnished with relations $\le_{\Omega_x}$, $\ll_{\Omega_x}$ and continuous functions $\omega_x:\Omega_x\times\Omega_x\to [0,\infty)$ such that
\begin{enumerate}
\item $(\Omega_x,d\vert_{\Omega_x\times\Omega_x},\ll_{\Omega_x},\leq_{\Omega_x}, \omega_x)$ is a Lorentzian pre-length space.
\item $I^\pm (y)\cap \Omega_x\neq\emptyset$, for all $y\in\Omega_x$.
\item All causal curves contained in $\Omega_x$ have uniformly bounded $d$-length.
\item For all $p\neq q\in \Omega_x$ with $p\le q$ there exists a future causal curve $\gamma_{pq}$ contained in $\Omega_x$ such that $L_\tau(\gamma_{pq})=\omega_x(p,q)$ and whose $\tau$-length is maximal among all future causal curves from $p$ to $q$ lying in $\Omega_x$.
\end{enumerate}

The precise definition reads as follows (refer to  Definition 3.22 of  \cite{Kunzinger}).

\begin{definition}\label{defi:lls}
A Lorentzian pre-length space for which:
\begin{enumerate}
    \item Every point $x$ has a localizing neighborhood $\Omega_x$.
    \item Every point has a neighborhood in which the $\ll$ is closed.
    \item If $x\le y$ then there exists a future causal curve from $x$ to $y$.
    \item $\tau (x,y)=\mathcal{T}(x,y)$, for all $x,y\in X$, where
\[
\mathcal{T}(x,y) = \sup\{L_{\tau}(\gamma): \gamma\mbox{ is future-directed causal curve from $x$ to $y$}\}.\footnote{Here, we set $\mathcal{T}(x,y)=0$ when the set of future-directed causal curves from $x$ to $y$ is empty.}
\]
is called a \emph{Lorentzian length space}.  
\end{enumerate}
\end{definition}

A causality theory for Lorentzian length spaces can be developed in a way that resembles the classical theory for spacetimes. In particular, a causal hierarchy can be established with the notion of global hyperbolicity at the top. Just as in the classical smooth case, a causal Lorentzian length space is \emph{globally hyperbolic} if the causal diamonds $J^+(x)\cap J^-(z)$ are compact. In this case, the time separation function $\tau : X\times X\to [0,\infty]$ is continuous and finite. Moreover,   $(X,d,\ll,\leq, \tau)$ satisfies the Avez-Seifert property: for any pair of causally related points $x\le y$ there exists a maximal future causal curve $\gamma$ from $x$ to $y$ \cite{ACS,Kunzinger}.


\section{Triangle Comparison}\label{sec:comp}

At the core of synthetic geometry is the notion of triangle comparison. In a nutshell, the main idea is that curvature bounds can be recovered locally from comparisons of the most basic geometric objects (lengths and angles) with respect to those found in a two dimensional model space.  As in the Alexandrov case, geodesic triangle comparison is the main key to describe the curvature in the Lorentzian context.  This is achieved by looking at timelike triangles in the Lorentzian space forms of constant sectional curvature $k$. We denote these models by
\[
\mathbb{M}_{k}^{L} = \left\{
\begin{array}{ll}
\mathbb{S}_{1}^{2}(r) & k=\frac{1}{r^2} \\
\mathbb{R}_{1}^{2} & k=0 \\
\mathbb{H}_{1}^{2}(r) & k=-\frac{1}{r^{2}}
\end{array}
\right.,
\]
where $\mathbb{S}_{1}^{2}(r)$ is the simply connected cover of two-dimensional de Sitter space, $\mathbb{R}_{1}^{2}$ is the two-dimensional Minkowski space and $\mathbb{H}_{1}^{2}(r)$ is the simple connected cover of the two-dimensional anti de Sitter space. 

Notice that in all these cases there exist restrictions on the lengths of the sides of a triangle akin to the triangle inequality in Euclidean geometry. In fact, since we will be dealing with triangles whose sides are unique maximizing timelike segments, the occurrence of conjugate points along geodesics hinders the possibility of having such kind of triangles with arbitrary long side lengths. Those restrictions are described in the Realizability Lemma (Lemma 4.6 of \cite{Kunzinger} or Lemma 2.1 in \cite{Bishop}), and if the side lengths of a triangle satisfy them, we would say that such triangle obey \emph{timelike size bounds for} $k$. Roughly speaking, for $k=0$ the restriction is given by the reverse triangle inequality, while for $k<0$  in addition we have that the greatest  side should be less than $\pi /\sqrt{-k}$. A timelike geodesic triangle in a model space ${M}_{k}^{L}$ whose vertices satisfy $x\ll y\ll z$ will be denoted by $\triangle xyz$.

\begin{definition}
A \emph{timelike geodesic triangle} $(x,y,z)$ in a Lorentzian length space $(X,d,\ll,\leq, \tau)$ is a triple of points in $X$ satisfying $x\ll y\ll z$ such that $\tau(x,z)<\infty$. Its \emph{sides} are  maximal future-directed causal curves $\alpha$ from $x$ to $y$, $\beta$ from $y$ to $z$ and $\gamma$ from $x$ to $z$, hence 
\[
L_{\tau}(\alpha)=\tau(x,y), \quad L_{\tau}(\beta)=\tau(y,z), \quad L_{\tau}(\gamma)=\tau(x,z).
\]
 A \emph{comparison triangle} for the geodesic triangle $(x,y,z)$ is a triangle $\triangle\bar{x}\bar{y}\bar{z}$ in a model space $\mathbb{M}_{k}^{L}$ with geodesic segments joining them $\bar{\alpha }$, $\bar{\beta}$, $\bar{\gamma}$ whose lengths equal those of $\alpha$, $\beta$, $\gamma$, respectively\footnote{Notice that comparison triangles are unique up to an isometry.}. In other words
 \[ 
\bar{\tau} (\bar x,\bar y) =\tau (x,y),\quad  \bar{\tau} (\bar y,\bar z) =\tau (y,z), \quad \bar{\tau} (\bar x,\bar z) =\tau (x,z).
 \]
 where $\bar{\tau}$ is the time separation function in the model space $\mathbb{M}_{k}^{L}$. If a point $q$ lies on a side of a timelike geodesic triangle, we denote by $\bar{q}$ the point lying on the corresponding side of the comparison triangle such that $\bar{\tau}(\bar{p},\bar{q})=\tau (p,q)$, where $p\in\{x,y\}$ is the initial point of the side.  
\end{definition}


\begin{figure}[h]
\centering{
\includegraphics[scale=.3]{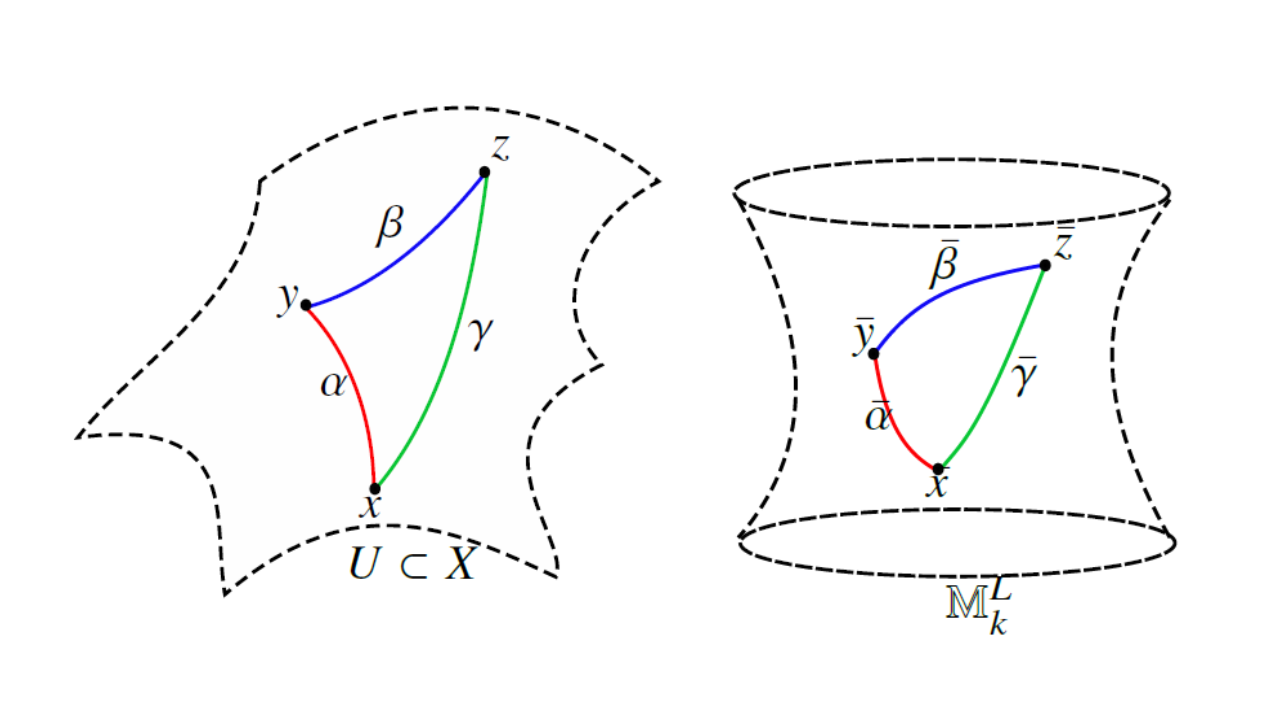}
\caption{A timelike geodesic triangle and its comparison triangle in $\mathbb{M}_{k}^{L}$}
}
\end{figure}

 We now state the original definition of timelike curvature bound for Lorentzian length spaces as established in \cite{Kunzinger}.

\begin{definition}\label{defi:curvbounds}
A Lorentzian pre-length space $(X,d,\ll,\leq,\tau)$ is said to have \emph{timelike curvature bounded below by} $k\in\mathbb{R}$ if around any $p\in X$ there exists a neighborhood $U\ni p$ with the following properties:
\begin{enumerate}
\item[(i)] $\tau\vert_{U\times U}$ is finite and continuous.
\item[(ii)] For every $x\ll y$ there exists a causal curve $\alpha$ in $U$ with $L_{\tau}(\alpha)=\tau(x,y)$.
\item[(iii)] For any timelike geodesic triangle $(x,y,z)$ in $U$, realized by maximal curves $\alpha$, $\beta$, $\gamma$ whose side lengths satisfy timelike size bounds for $k$ the following holds: if triangle $\triangle\bar{x}\bar{y}\bar{z})$ is a comparison triangle for $(x,y,z)$ in $\mathbb{M}_{k}^{L}$ realized by timelike geodesics $\bar{\alpha}$, $\bar{\beta}$ and $\bar{\gamma}$, then whenever $p$, $q$ are points on the sides of $(x,y,z)$ and $\bar{p},\bar{q}$ are the corresponding points in $(\bar{x},\bar{y},\bar{z})$, we have 
\[
\tau(p,q)\leq \bar{\tau}(\bar{p},\bar{q}).
\]
If under the same hypothesis the alternative inequality
\[
\tau(p,q)\ge \bar{\tau}(\bar{p},\bar{q}).
\]
holds, we will say that $(X,d,\ll,\leq,\tau)$ has \emph{timelike curvature bounded above by} $k$. The neighborhood $U$ is called a \emph{comparison neighborhood for} $p$. 
\end{enumerate}
\end{definition}


\begin{figure}[h]
\centering{
\includegraphics[scale=.3]{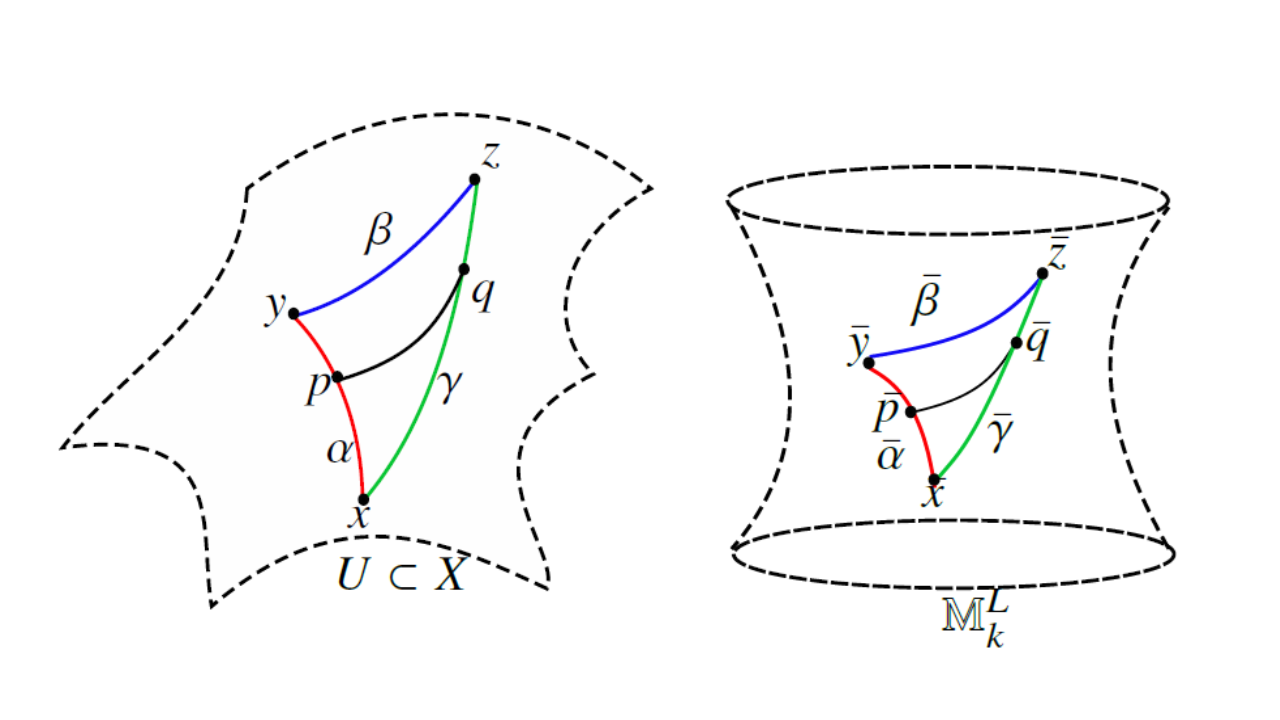}
\caption{Timelike curvature bounded from below by $k$}
}
\end{figure}

In the interest of having a lighter presentation, we include the technical aspects of Definition \ref{defi:curvbounds} as follows:

\begin{definition}\label{defi:curvneigh}
Let $(X,d,\ll,\leq,\tau)$ be a Lorentzian pre-length space. We say that $x\in X$ has a \emph{compatible neighborhood} $U$ if
\begin{enumerate}
    \item[(i)] $\tau\vert_{U\times U}$ is finite and continuous.
\item[(ii)] For every $x\ll y$ there exists a causal curve $\alpha$ in $U$ with $L_{\tau}(\alpha)=\tau(x,y)$.
\end{enumerate}
\end{definition}

\begin{remark}\label{GeodesicPropertiesU}
Let $U$ be a compatible neighborhood in a Lorentzian pre-length space  $(X,d,\ll,\leq,\tau)$.   
\begin{enumerate}
\item In virtue of  Proposition 3.34, Remark 4.3 and Remark 4.8 in \cite{Kunzinger}, any maximal timelike curve in $U$ can parameterized by arc length. Furthermore, any intermediate value of $\tau$ along $\alpha$, $\beta$ or $\gamma$ is attained. In particular, all of these follow when $(X,d,\ll,\leq,\tau)$ is a globally hyperbolic Lorentzian length space.
\item Let $(x,y,z)$ be a timelike geodesic triangle in $U$, realized by maximal causal curves $\alpha$, $\beta$, $\gamma$ whose side lengths satisfy timelike size bounds for $k$. If we take points $p\in \alpha$, $q\in \beta$,  $r\in\gamma$ then the sides of triangle $(p,y,q)$ also satisfy timelike size bounds for $k$. Moreover, if $p\ll r$, then the sides of triangle $(x,p,r)$ satisfy timelike size bounds for $k$ and so do the sides of triangle $(r,q,z)$ provided $r\ll q$. 
\end{enumerate}
\end{remark}

As can be readily seen, Definition \ref{defi:curvbounds} formalizes the intuitive notion that in the presence of positive (negative) timelike curvature, triangles look fatter (thinner) than flat triangles. In fact, this formulation agrees with the well known notions in metric geometry of Alexandrov (curvature bounded from below) and CAT (curvature bounded from above) spaces. Moreover, it is consistent with the definition of curvature bounds for semi-Riemannian manifolds proposed in \cite{Bishop}. However, there is a catch. According to  \cite{Bishop}, a Lorentzian manifold having curvature bounded from \emph{below} by $k$ while having timelike curvature bounded from \emph{below} (as a Lorentzian length space), it has sectional curvature on timelike planes bounded from \emph{above} by $k$. A similar statement holds if the words below and above are interchanged.


Among the first applications of the synthetic notion of curvature in Lorentzian length spaces described in \cite{Kunzinger} we find the description of curvature singularities, a topic of great interest in the realm of Relativity. A Lorentzian length space has \emph{timelike curvature unbounded from below} (\emph{above}) if there exist a compatible neighborhood that fails to be a comparison neighborhood for all $k\in\mathbb{R}$. While this definition can be used to spot singularities (for instance, the interior region of Schwarszchild spacetime is a Lorentzian length space with timelike curvature unbounded from below), its use often requires comparisons on the large, which are at odds with the intuitive local character of curvature.  As an example, a timelike funnel with timelike $\lambda$ has timelike curvature unbounded from below (see Examples 3.19 and 4.21 in \cite{Kunzinger}), but both $I^-(p)$ and $I^+(q)$ are flat open subsets of it, hence Lorentzian length spaces in their own right with timelike curvature bounded ---both from below and from above-- by $0$.

Here we present a novel example of a globally hyperbolic Lorentzian length space with arbitrary small $k$-compatible neighborhoods in which no comparison is possible. Hence, in spite of being at the top of the causal ladder, any open subset of it has unbounded timelike curvature both from above and from below.

\begin{example}
Let us consider $(X,d)=(\mathbb{R}^2,d_T)$ where $d_T$ is the taxicab metric
\[
d_T((x_1,y_1),(x_2,y_2))=\vert x_1-x_2\vert +\vert y_1-y_2\vert ,
\] 
and let  the relations $\ll_T$, $\leq_T$ be the usual chronological and causal relations in Minkowski space $\mathbb{R}^2_1$. Furthermore,  for $(x_1,y_1),(x_2,y_2)\in\mathbb{R}^{2}$ we define
\[
\tau_T((x_1,y_1),(x_2,y_2) = \left\{
\begin{array}{ll}
y_2-y_1-\vert x_2-x_1\vert   & \mbox{if $(x_1,y_1)\leq (x_2,y_2)$} \\
0 & \mbox{otherwise}
\end{array}
\right.
\]
\end{example}
We first show now that $\mathbb{R}^{2,1}_T=(\mathbb{R}^{2},d_T,\ll_T,\leq_T,\tau_T)$ ---dubbed \emph{Lorentzian taxicab space}--- is a globally hyperbolic Lorentzian length space.

A straightforward computation shows that $\tau_T$ satisfies the causal properties of a time separation as described in Definition \ref{defi:prel}. Since $d_T$ is equivalent to the standard  Euclidean metric, the topology induced by $d_T$ is Euclidean, and as a consequence $d_T$ is continuous. Moreover, the class of (Lipschitz) causal curves in both $\mathbb{R}^{2,1}_T$ and $\mathbb{R}^2_1$ also coincide, which in turns implies that the causal diamonds of $\mathbb{R}^{2,1}_T$ are just the standard causal diamonds of $\mathbb{R}^{2}_1$. Compacity of the causal diamonds follows, and thus
$\mathbb{R}^{2,1}_T$ is a globally hyperbolic pre-length space.

Now we focus on the requirements of Definition \ref{defi:lls}. Global hyperbolicity implies causal connectivity and causal closedness. Moreover, given any point  $(x,y)\in \mathbb{R}^{2,1}_T$ consider the $d_T$ open ball centered at $(x,y)$
\[
\Omega_{(x,y)} = \{(p,q)\in \mathbb{R}^{2}: \vert p-x\vert +\vert q-y\vert <1\},
\]
and $\omega_{(x,y)}=\tau_T\vert_{\Omega_{(x,y)}}$. Conditions (1) and (2) of the definition of a localizing neighborhood are immediate. In order to show (3),  let $\gamma:[a,b]\to \mathbb{R}^{1,1}_T$, $\gamma(t)=(\gamma_1(t),\gamma_2(t))$,  be a future causal curve in $\Omega_{(x,y)}$ and take a partition $a=t_1<t_2<\cdots < t_N=b$. Since $\gamma_2(t_{i+1})- \gamma_2(t_i)\geq \vert\gamma_1(t_i)-\gamma_1(t_{i+1})\vert$ we have
\[
\begin{array}{rcl}
\dst\sum_{i=1}^{N-1} d_T(\gamma(t_i),\gamma(t_{i+1})) &=& \dst\sum_{i=1}^{N-1} \vert\gamma_1(t_i)-\gamma_1(t_{i+1})\vert + \vert\gamma_2(t_i)-\gamma_2(t_{i+1})\vert \\
&\leq& \dst\sum_{i=1}^{N-1} 2(\gamma_2(t_{i+1})- \gamma_2(t_i))\leq 4.
\end{array}
\]
Thus, the $d_T$ arc-length of curves is bounded.  

Even though $\mathbb{R}^2_1$ and $\mathbb{R}^{2,1}_T$ share topology and casuality, their geodesic structures are rather different. As opposed to the Minkowski case, the Lorentzian taxicab admits infinitely many maximal curves joining any pair of causally related points. Indeed,  given $(x_1,y_2)\le_T (x_2,y_2)$ take any future causal curve and a partition as above. Further assume $\gamma_1$ is a monotone function. Thus
\[
\begin{array}{rcl}
\dst\sum_{i=1}^{N-1} \tau_T(\gamma(t_i),\gamma(t_{i+1})) &=& \dst\sum_{i=1}^{N-1} \gamma_2(t_{i+1})-\gamma_2(t_{i})-\vert\gamma_1(t_{i+1})-\gamma_1(t_{i})\vert \\
&=& \dst\sum_{i=1}^{N-1} \gamma_2(t_{i+1})-\gamma_2(t_{i}) - \dst\sum_{i=1}^{N-1} \gamma_1(t_{i+1})-\gamma_1(t_{i}) \\
&=& \gamma_2(b)-\gamma_2(a)-(\vert\gamma_1(b)-\gamma_1(a)\vert )\\,
&=& \tau_T((x_1,y_1),(x_2,y_2))
\end{array}
\]
Hence $L_{\tau_T}(\gamma)=\tau_T((x_1,y_1),(x_2,y_2))$, and
\[
\tau_T((x_1,y_1),(x_2,y_2)) = L_{\tau_{T}}(\gamma) \leq \mathcal{T}((x_1,y_1),(x_2,y_2)) \leq  \tau_T((x_1,y_1),(x_2,y_2)).
\]
which shows that $\gamma$ is maximizing. As an immediate consequence, condition (4) of localizing neighborhoods holds and also $\mathcal{T}=\tau$. Thus $\mathbb{R}^{2,1}_T$ is a globally hyperbolic Lorentzian length space.


\begin{figure}[ht]
\centering{
\includegraphics[scale=.3]{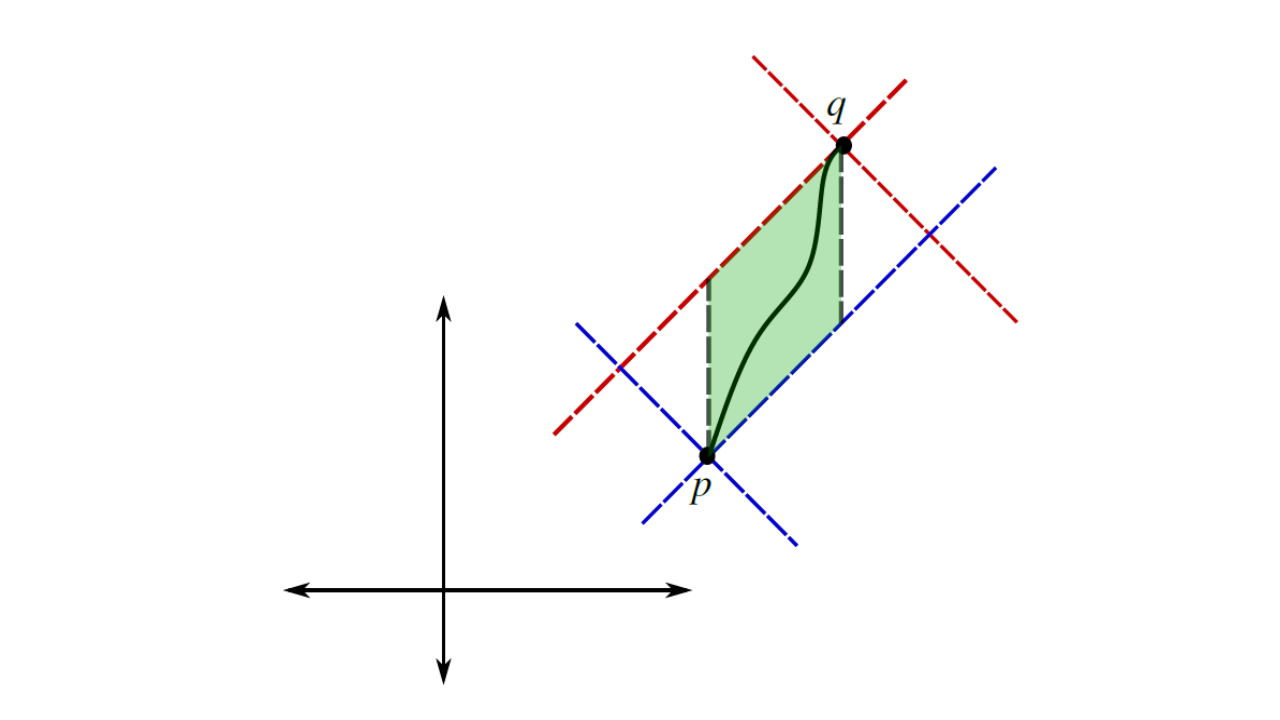}
\caption{Infinitely many maximal causal curves joining causally related points in $\mathbb{R}^{2,1}_T$.}
}
\end{figure}

We now apply directly the definition of timelike curvature bounds to show that there exists arbitrary small neighborhoods of $\mathbb{R}^{2,1}_T$ with no curvature bounds. We focus first at the case $k=0$. Let $\varepsilon>0$ and consider the triangle $(x,y, z)$ in $\mathbb{R}^{2,1}_T$ whose vertices are $x=(0,0)$, $y=(-2\varepsilon,3\varepsilon)$, $z=(\varepsilon,7\varepsilon)$,  and whose sides are the linear segments connecting them. Moreover, set the comparison triangle $\triangle\overline{x}\overline{y}\overline{z}$ in Minkowski space given by $\overline{x}=(0,0)$, $\overline{y}=(\sqrt{10}\varepsilon ,3\varepsilon)$ and $\overline{z}=(0,6\varepsilon)$. 

Further let  $q=( {\varepsilon}/{4},{13\varepsilon}/{2} )$ and notice it belongs to the segment joining $y$ with $z$. Its corresponding point in the comparison triangle $\triangle\overline{x}\overline{y}\overline{z})$ is $\overline{q}=( \sqrt{10}\varepsilon/4, {21\varepsilon}/{4} )$. Then
\[
\tau_T(x,q) =\dst\frac{25\varepsilon}{4} > \dst\frac{\sqrt{431}\varepsilon}{4} =
\overline{\tau}(\overline{x},\overline{q}) 
\]
On the other hand,  set $q=( -{5\varepsilon}/{4},4\varepsilon )$ on the segment from $y$ to $z$ and its corresponding point  $\overline{q}=( {3\sqrt{10}\varepsilon}/{4}, {15\varepsilon}/{4} )$. Thus
\[
\tau_T(x,q) = \frac{11\varepsilon}{4}< \frac{\sqrt{135}\varepsilon}{4} =
\overline{\tau}(\overline{x},\overline{q}) 
\]
Therefore, none of the curvature conditions of Definition \ref{defi:curvbounds} hold in a neighborhood $U$ containing a triangle isometric to $(x,y,z)$. 

We can use the same choice of triangle $(x,y,z)$ and points $p$, $q$ in order  to obtain similar inequalities in the model spaces $\mathbb{M}_k^L$ with $k\neq 0$ provided that $\varepsilon$ is small enough so that $(x,y,z)$ satisfies timelike size bounds.

\begin{remark}
The above example admits an straightforward generalization: consider a metric space $(X,d_X)$ and a Lorentzian pre-length space $(Y,d_Y,\ll_Y,\leq_Y,\tau_Y)$  let us take the metric $d:(X\times Y)\times (X\times Y)\to \mathbb{R}_{\geq 0}$ defined as
\[
d((a,b),(x,y)) = d_X(a,x) + d_Y(b,y).
\]
Let us set $\ll$, $\leq$ and $\tau:(X\times Y)\times (X\times Y)\to [0,\infty]$ defined as follows:
\begin{itemize}
\item $(a,b)\leq (x,y)$ if and only if $\tau_Y(b,y)\geq d_X(a,x)$ and $b\leq y$.
\item $(a,b)\ll (x,y)$ if and only if $\tau_Y(b,y)> d_X(a,x)$.
\item For $(a,b),(x,y)\in X\times Y$ we have
    \[
    \tau((a,b),(x,y)) = \left\{
    \begin{array}{ll}
    \tau_Y(b,y) -d_X(a,x)   & \mbox{if $(a,b)\leq (x,y)$} \\
    0 & \mbox{otherwise}
    \end{array}
    \right.
    \]
\end{itemize}
As can be checked, the \emph{taxicab Lorentzian product} $(X\times Y,d,\ll,\leq,\tau)$ is a Lorentzian pre-length space.
\end{remark}


\section{Non-normalized angles}\label{sec:nonnorm}

According to Euclidean geometry, side lengths and angle measure are the fundamental quantities associated to a triangle. Since the notion of timelike curvature bounds involves length comparison, it is natural to ask if there are alternative formulations involving angle measurements.  This is indeed the case for Alexandrov and CAT spaces. Moreover, in the context of semi-Riemannian geometry, an affirmative answer is given in Proposition 2.1 of \cite{Bishop}.  Refer to \cite{Kirch} for a thorough analysis on the properties of non-normalized angles.

\begin{definition}
Let $(M,\langle \cdot,\cdot\rangle )$  be a semi-Riemannian model space of curvature $k$. For a geodesic triangle $\triangle xyz$ in $M$ with $\alpha ,\gamma :[0,1]\to M$ geodesics  connecting $x$ with $y$, and $x$ with $z$, respectively, we denote $\measuredangle yxz= \langle \alpha'(0), \gamma'(0) \rangle$ and call it the \emph{non-normalized angle} at $p$.
\end{definition}

\begin{remark}
In the scenario depicted above, choose for instance a point $p=\alpha (\lambda)$, other than $x$ on the side $\alpha$. Then it follows 
\[
\angle pxz =\lambda \angle yxz.
\]
Hence,  $\angle pxz$ and $\angle yxz$ though not equal, only differ by the scaling factor $\lambda$. Hence the use of the term non-normalized angles is fully accurate.
\end{remark}

In view of the above remark, we can relate the non-normalized angles when the endpoints vary along the sides of a geodesic triangle. We state this relation in the form of a lemma, which will be used often in the following results.

\begin{lemma}\label{RescalAngulo}
Let $\triangle xyz$ be a timelike geodesic triangle in a Lorentzian model space $\mb{M}_{k}^{L}$ realized by maximal timelike curves $\alpha :[0,a]\to \mb{M}_{k}^{L} $, $\beta :[0,b]\to \mb{M}_{k}^{L} $, $\gamma :[0,c]\to \mb{M}_{k}^{L} $ whose side lengths satisfy timelike size bounds for $k$. Then, for every $a_0\in[0,a]$, $b_0\in[0,b]$ and $c_0\in[0,c]$ we have
\begin{enumerate}
\item[(a)] $\mangle\bar{\alpha}(a_0)\bar{x}\bar{z} = \frac{a_0}{a}\mangle \bar{y}\bar{x}\bar{z}$ and $\mangle \bar{y}\bar{x}\bar{\gamma}(c_0) = \frac{c_0}{c}\mangle \bar{y}\bar{x}\bar{z}$.
\item[(b)] $\mangle \bar{\beta}(b_0)\bar{y}\bar{x} = \frac{b_0}{b} \mangle \bar{z}\bar{y}\bar{x}$ and $\mangle \bar{z}\bar{y}\bar{\alpha}(a_0) = \frac{a-a_0}{a} \mangle \bar{z}\bar{y}\bar{x}$.
\item[(c)] $\mangle \bar{\gamma}(c_0)\bar{z}\bar{y} = \frac{c-c_0}{c} \mangle \bar{x}\bar{z}\bar{y}$ and $\mangle \bar{x}\bar{z}\bar{\beta}(b_0) = \frac{b-b_0}{b} \mangle\bar{x}\bar{z}\bar{y}$.
\end{enumerate}
\end{lemma}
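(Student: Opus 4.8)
The plan is to reduce the six identities to two elementary facts about affinely parametrized geodesics together with careful bookkeeping of orientations. For timelike related points $\bar u\ll \bar v$ in $\mb{M}_k^L$ whose separation obeys the timelike size bounds for $k$, let $\sigma_{\bar u\bar v}\colon[0,1]\to\mb{M}_k^L$ be the unique maximal geodesic with $\sigma_{\bar u\bar v}(0)=\bar u$ and $\sigma_{\bar u\bar v}(1)=\bar v$. With this notation the non-normalized angle at $\bar u$ determined by the sides pointing to $\bar v$ and $\bar w$ is, by definition, the inner product $\langle \sigma_{\bar u\bar v}'(0),\sigma_{\bar u\bar w}'(0)\rangle$. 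Well-definedness rests on the uniqueness of $\sigma_{\bar u\bar v}$ under the size bounds, and part (2) of Remark \ref{GeodesicPropertiesU} guarantees that each sub-triangle produced below again satisfies those bounds, so every geodesic we invoke is legitimate.

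First I would record the two facts. The scaling identity, already noted in the remark preceding the lemma, states that a point of the form $\sigma_{\bar u\bar v}(\mu)$ with $\mu\in[0,1]$ is joined to $\bar u$ by the reparametrized restriction $t\mapsto\sigma_{\bar u\bar v}(\mu t)$, whose initial velocity is $\mu\,\sigma_{\bar u\bar v}'(0)$. The reversal identity $\sigma_{\bar v\bar u}(t)=\sigma_{\bar u\bar v}(1-t)$ yields $\sigma_{\bar v\bar u}'(0)=-\sigma_{\bar u\bar v}'(1)$. Finally, passing from arc length to the unit interval, the side $\bar\alpha$ satisfies $\bar\alpha(a_0)=\sigma_{\bar x\bar y}(a_0/a)$, and likewise for $\bar\beta$ and $\bar\gamma$.

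With these in hand the computation is pure bilinearity of $\langle\cdot,\cdot\rangle$. For the forward cases, where the moving side emanates from the vertex, the point lies at parameter $a_0/a$ (resp. $b_0/b$, $c_0/c$) along the relevant $\sigma$, so the scaling identity and bilinearity give the factor $a_0/a$ (resp. $b_0/b$, $c_0/c$); this covers the first identity of (a), the second of (a), and the first of (b). For the reverse cases, the moving side arrives at the vertex instead of leaving it, so one first applies the reversal identity: viewed from $\bar y$, the point $\bar\alpha(a_0)$ equals $\sigma_{\bar y\bar x}(1-a_0/a)=\sigma_{\bar y\bar x}((a-a_0)/a)$, and analogously $\bar\gamma(c_0)$ and $\bar\beta(b_0)$ sit at parameters $(c-c_0)/c$ and $(b-b_0)/b$ as seen from $\bar z$. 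The scaling identity and bilinearity then produce the complementary factors, yielding the second identity of (b) and both identities of (c).

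The content of the argument is genuinely just this orientation bookkeeping: the only place one can go wrong is in deciding, at the vertices $\bar y$ and $\bar z$, that the sides $\bar\alpha$, $\bar\gamma$ and $\bar\beta$ terminate at (rather than start from) the vertex, which is exactly what forces the reversal identity and the appearance of $(a-a_0)/a$, $(c-c_0)/c$ and $(b-b_0)/b$ in place of the naive $a_0/a$, $c_0/c$, $b_0/b$. Everything else is linear algebra, so I expect no further obstacle.
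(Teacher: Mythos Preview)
Your argument is correct and is precisely the approach the paper takes: the paper does not give a separate proof of this lemma but simply refers to the preceding remark, where the scaling identity $\mangle pxz=\lambda\,\mangle yxz$ for $p=\alpha(\lambda)$ is observed directly from the definition of the non-normalized angle as an inner product of initial velocities. Your write-up is a careful unpacking of exactly that observation, together with the orientation bookkeeping needed at the vertices $\bar y$ and $\bar z$; the only quibble is that your appeal to Remark~\ref{GeodesicPropertiesU} is unnecessary here, since the only geodesics you use are restrictions of the original sides (so no new size-bound check is required), and that remark is phrased for pre-length spaces rather than model spaces.
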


Two of the main results pertaining the above notion are the Hinge Lemma (Lemma 2.2 in \cite{Bishop}) and the Straightening Lemma (Lemma 2.4 in \cite{Bishop}). We present these lemmas in a context adapted to our ends. Notice that in its original formulation, these results are stated using signed distances.

First, notice that the result below agrees completely with its basic Euclidean counterpart.

\begin{lemma}[Hinge Lemma for included angles]\label{HingeLemmaMk}
Let $\triangle x_1y_1z_1$ be two timelike geodesic triangles in $\mathbb{M}^{L}_{k}$ satisfying timelike curvature bounds for $k$. 
\begin{itemize}
    \item Suppose $\bar{\tau}(x_1,y_1)=\bar{\tau}(x_2,y_2)$ and $\bar{\tau}(y_1,z_1)=\bar{\tau}(y_2,z_2)$. Then $\bar{\tau}(y_1,z_1)\leq \bar{\tau}(y_2,z_2)$ if and only if $\measuredangle y_1x_1z_1 \leq \measuredangle y_2x_2z_2$.
    \item Suppose $\bar{\tau}(x_1,y_1)=\bar{\tau}(x_2,y_2)$ and $\bar{\tau}(y_1,z_1)=\bar{\tau}(y_2,z_2)$. Then $\bar{\tau}(x_1,z_1)\leq \bar{\tau}(x_2,z_2)$ if and only if
$\measuredangle x_1y_1z_1 \leq \measuredangle x_2y_2z_2$.
    \item Suppose $\bar{\tau}(x_1,z_1)=\bar{\tau}(x_2,z_2)$ and $\bar{\tau}(y_1,z_1)=\bar{\tau}(y_2,z_2)$. Then $\bar{\tau}(x_1,y_1)\leq \bar{\tau}(x_2,y_2)$ if and only if
$\measuredangle y_1z_1x_1 \leq \measuredangle y_2z_2x_2$.
\end{itemize}
 \end{lemma}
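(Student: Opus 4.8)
The plan is to reduce all three statements to a single monotonicity principle in the model space $\mathbb{M}_k^L$: \emph{with the two side lengths adjacent to a fixed vertex held constant, the non-normalized angle at that vertex is a strictly increasing function of the length of the opposite side}. Granting this, each bullet follows at once, since in every case the two triangles $\triangle x_1y_1z_1$ and $\triangle x_2y_2z_2$ lie in the same $\mathbb{M}_k^L$ and share the two side lengths adjacent to the relevant vertex; the asserted equivalence is then exactly the fact that a strictly increasing function both preserves and reflects the order, which gives the ``if and only if'' in both directions. The three vertices are treated identically up to relabeling, the only extra bookkeeping being the causal orientation of the segments emanating from each vertex (future--future at $x$, future--past at $y$, past--past at $z$), which changes signs but not the structure of the argument.

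To establish the monotonicity I would argue in each model space separately. For $k=0$ the computation is immediate and reproduces the Euclidean picture: parametrizing the sides affinely on $[0,1]$ so that each velocity vector at a vertex has Lorentzian norm equal to the corresponding $\tau$-length, the polarization identity gives, at a vertex $P$ with neighbors $Q,R$,
\[
\measuredangle QPR \;=\; \tfrac12\bigl(\bar\tau(Q,R)^2-\bar\tau(P,Q)^2-\bar\tau(P,R)^2\bigr),
\]
which, with the two adjacent lengths fixed, is manifestly a strictly increasing function of $\bar\tau(Q,R)$. For $k\neq 0$ I would replace the polarization identity by the Lorentzian law of cosines in $\mathbb{S}_1^2(r)$ and $\mathbb{H}_1^2(r)$. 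Introducing the hyperbolic (boost) angle $\theta$ at $P$ between the unit tangents of the two emanating sides, the non-normalized angle $\measuredangle QPR$ is a strictly monotone function of $\theta$ once the two adjacent lengths are fixed, while the law of cosines likewise expresses $\bar\tau(Q,R)$ as a strictly monotone function of $\theta$ throughout the range allowed by the timelike size bounds for $k$. Composing these two monotonicities yields the desired strictly monotone dependence of the included angle on the opposite side.

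The main obstacle is precisely the curved case: one must (a) fix the correct form of the Lorentzian law of cosines in de Sitter and anti--de Sitter space, together with the correct signs for the future--past hinge at the middle vertex $y$, and (b) verify that the relevant derivative with respect to $\theta$ does not vanish anywhere in the admissible range cut out by the timelike size bounds, so that the monotonicity is genuinely strict. One may either carry out these computations directly, or translate the signed-distance version of the Hinge Lemma (Lemma 2.2 of \cite{Bishop}) into the present $\tau$-formulation via the dictionary between signed distances and the time separation function and invoke it; in either route the strictness and the sign conventions are the only delicate points.
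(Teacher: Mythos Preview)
The paper does not prove this lemma at all: it is stated as an adaptation of Lemma~2.2 in \cite{Bishop} and simply invoked thereafter, so there is no ``paper's own proof'' to compare against. Your proposal is correct and is essentially the standard argument behind the cited result: reduce each bullet to the monotone dependence of the opposite side on the included (non-normalized) angle via the law of cosines in $\mathbb{M}_k^L$. For $k=0$ your polarization identity coincides with the paper's Lemma~\ref{MinkowskiLawCosines}, and for $k\neq 0$ the hyperbolic laws of cosines in \cite{Birman,Laws} play exactly the role you describe; the timelike size bounds guarantee the required strict monotonicity on the admissible parameter range. Your second option---translating the signed-distance statement of \cite[Lemma~2.2]{Bishop} into the $\tau$-formulation---is precisely what the paper implicitly relies on. One incidental remark: the first bullet of the stated lemma contains a typo (the hypothesis should fix $\bar\tau(x_i,y_i)$ and $\bar\tau(x_i,z_i)$, the two sides adjacent to the vertex $x_i$, not $\bar\tau(y_i,z_i)$); your reading of it as the ``included angle at $x$'' is the intended one.
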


\begin{lemma}[Hinge Lemma for shoulder angles]\label{Hinge2}
Let $\triangle x_1y_1z_1$ be two timelike geodesic triangles in $\mathbb{M}^{L}_{k}$ satisfying timelike curvature bounds for $k$. \begin{itemize}
\item Suppose $\overline{\tau}(x_1,y_1)=\overline{\tau}(x_2,y_2)$ and $\overline{\tau}(x_1,z_1)=\overline{\tau}(x_2,z_2)$. If $\overline{\tau}(y_1,z_1)\leq \overline{\tau}(y_2,z_2)$ then $\measuredangle x_2y_2z_2 \leq \measuredangle x_1y_1z_1$ or $\measuredangle x_2z_2y_2\leq \measuredangle x_1z_1y_1$.
\item Suppose $\overline{\tau}(x_1,y_1)=\overline{\tau}(x_2,y_2)$ and $\overline{\tau}(y_1,z_1)=\overline{\tau}(y_2,z_2)$. If $\overline{\tau}(x_1,z_1)\leq \overline{\tau}(x_2,z_2)$ then $\measuredangle y_2x_2z_2\leq \measuredangle y_1x_1z_1$ or $\measuredangle y_2z_2x_2 \leq \measuredangle y_1z_1x_1$.
\item Suppose $\overline{\tau}(x_1,z_1)=\overline{\tau}(x_2,z_2)$ and $\overline{\tau}(y_1,z_1)=\overline{\tau}(y_2,z_2)$. If $\overline{\tau}(x_1,y_1)\leq \overline{\tau}(x_2,y_2)$ then $\measuredangle x_2y_2z_2\leq \measuredangle x_1y_1z_1$ or $\measuredangle y_2x_2z_2 \leq y_1x_1z_1$.
\end{itemize}
\end{lemma}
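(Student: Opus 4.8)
The plan is to reduce everything to the first bullet, argue by contradiction, and use Lemma~\ref{HingeLemmaMk} to trade the base length for the apex angle, a law-of-cosines computation in $\mathbb{M}^L_k$ supplying the crux. The three bullets are the same assertion with the vertices relabelled, so it suffices to treat the first: the two sides issuing from the apex $x$ are fixed, $p:=\bar{\tau}(x_1,y_1)=\bar{\tau}(x_2,y_2)$ and $q:=\bar{\tau}(x_1,z_1)=\bar{\tau}(x_2,z_2)$, while the base $r_i:=\bar{\tau}(y_i,z_i)$ satisfies $r_1\le r_2$; the remaining two bullets follow by applying the correspondingly permuted part of Lemma~\ref{HingeLemmaMk}. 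Suppose, towards a contradiction, that neither shoulder inequality holds, that is
\[
\measuredangle x_2y_2z_2>\measuredangle x_1y_1z_1 \qquad\text{and}\qquad \measuredangle x_2z_2y_2>\measuredangle x_1z_1y_1 .
\]
Since a timelike triangle in a model space is determined up to isometry by two sides and the included angle, each $\triangle x_iy_iz_i$ is fixed by its apex angle $\measuredangle y_ix_iz_i$, and with the two sides at $x$ held fixed the base length $r_i$ is a strictly increasing function of that apex angle by Lemma~\ref{HingeLemmaMk}. Hence it is enough to view the two shoulder angles as functions of the single parameter $r$, with $p$ and $q$ fixed, and to show that they cannot both strictly increase as $r$ increases.

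The heart of the argument is a monotonicity property of the \emph{sum} of the two shoulder angles. Parametrizing each side by a constant-speed geodesic on $[0,1]$, so that the velocity at a vertex has Lorentzian length equal to the adjacent side length, and evaluating the defining inner products in the flat model $\mathbb{R}^2_1$, one obtains the quadratic expressions
\[
\measuredangle x_iy_iz_i=\tfrac12\,(q^2-p^2-r_i^2),\qquad \measuredangle x_iz_iy_i=\tfrac12\,(p^2-q^2-r_i^2),
\]
and therefore the clean identity $\measuredangle x_iy_iz_i+\measuredangle x_iz_iy_i=-\,r_i^2$. This sum is strictly decreasing in $r$, so $r_1\le r_2$ forces $\measuredangle x_2y_2z_2+\measuredangle x_2z_2y_2\le \measuredangle x_1y_1z_1+\measuredangle x_1z_1y_1$, contradicting the assumption that both shoulder angles increase. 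For $k\ne 0$ one performs the same elimination of the apex data using the law of cosines for timelike triangles in $\mathbb{M}^L_k$, the goal being to show that the shoulder-angle sum remains a non-increasing function of the base length $r$; the contradiction then closes in exactly the same way.

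The main obstacle is precisely this curved computation: one must verify the monotonicity of the shoulder-angle sum uniformly for the de Sitter ($k>0$) and anti–de Sitter ($k<0$) models, keeping track both of the Lorentzian signature entering the law of cosines and of the timelike size bounds for $k$, which constrain the admissible range of $r$ (and, when $k<0$, bound the longest side by $\pi/\sqrt{-k}$). The disjunctive form of the conclusion is genuinely needed here: in the flat case the identity above shows that \emph{both} shoulder angles decrease, whereas for $k\ne 0$ one expects only their sum to decrease, so in general just one of the two shoulder angles need diminish.
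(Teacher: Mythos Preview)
The paper does not prove this lemma; it is presented as an adaptation of Alexander--Bishop's Lemma~2.2 and stated without argument, so there is no in-paper proof to compare against.

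Your treatment of $k=0$ is correct and in fact yields more than needed: from $\measuredangle x_iy_iz_i=\tfrac12(q^2-p^2-r_i^2)$ and $\measuredangle x_iz_iy_i=\tfrac12(p^2-q^2-r_i^2)$ each shoulder angle is \emph{individually} strictly decreasing in $r$, so both inequalities hold and the disjunction is not sharp here.

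The genuine gap is the curved case. You correctly identify that the crux is the monotonicity of the shoulder angles (or their sum) as functions of the base length in $\mathbb{M}^L_k$ for $k\ne 0$, but you then explicitly label this ``the main obstacle'' and do not carry it out. That unproved monotonicity \emph{is} the content of the lemma for $k\ne 0$, so as it stands the proposal is not a proof there. Moreover, your speculation that ``for $k\ne 0$ one expects only their sum to decrease'' is unsupported and misleading: the Lorentzian laws of cosines in $\mathbb{S}^2_1$ and $\mathbb{H}^2_1$ express each shoulder angle as an explicit monotone function of the opposite side with the other two fixed, so both inequalities hold for every $k$---this is what Alexander--Bishop actually prove, and what the paper implicitly relies on when it extracts a single specific shoulder inequality in the proof of Lemma~\ref{ConverseSL}. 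To complete your argument you must either carry out the curved law-of-cosines computation or cite it directly.
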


As a first application of the notion of non-normalized angles we prove an improved version of timelike curvature bounds more suited for applications. Namely, triangle comparison can be more easily performed when one of the points $p$ or $q$ in Definition \ref{defi:curvbounds} agree with a vertex, while the other point is chosen on its opposite side.

\begin{proposition} \label{CeviansCriteria}
Let $(X,d,\ll,\leq,\tau)$ be a Lorentzian pre-length space and  suppose that for every point $X$ there exists a compatible neighborhood $U$ with the following property: 
for any timelike geodesic triangle $(x,y,z)$ in $U$, realized by maximal timelike curves $\alpha$, $\beta$, $\gamma$ whose lengths satisfy timelike size bounds for $k$, whenever $p$ is a point on one side of $(x,y,z)$ and $v\in\{x,y,z\}$ is the vertex opposite to it, then we have 
    \begin{enumerate}
    \item[(i)] $\tau(v,p)\leq \bar{\tau}(\bar{v},\bar{p})$, if $\tau(v,p)>0$.
    \item[(ii)] $\tau(p,v)\leq \bar{\tau}(\bar{p},\bar{v})$, if
        $\tau(p,v)>0$;
    \end{enumerate}
 where $\triangle\bar{x}\bar{y}\bar{z}$ is a comparison triangle of $(x,y,z)$ in $\mb{M}_{k}^{L}$ with corresponding sides $\bar{\alpha}$, $\bar{\beta}$, $\bar{\gamma}$. Then $(X,d,\ll,\leq,\tau)$ has timelike curvature bounded below by $k$.
\end{proposition}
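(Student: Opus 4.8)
The plan is to deduce the full comparison of Definition \ref{defi:curvbounds}---the inequality $\tau(p,q)\leq\bar\tau(\bar p,\bar q)$ for \emph{arbitrary} points $p,q$ on the sides of a triangle $(x,y,z)$ in $U$ satisfying timelike size bounds for $k$---from the hypothesis, which only compares a vertex with a point on the opposite side. First I would unfold the definition and dispose of the degenerate situations: if $p$ and $q$ are not chronologically related then $\tau(p,q)=0$ and the inequality is trivial; if they lie on a common side, then both sides being maximal and arc-length parametrized forces $\tau(p,q)=\bar\tau(\bar p,\bar q)$; and if one of them is a vertex, the inequality is exactly the hypothesis. The crux is therefore the case where $p$ and $q$ lie on two \emph{distinct} sides, are chronologically related (say $p\ll q$), and neither is a vertex.

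In this situation I would run a two-step reduction to cevians, treating the representative configuration $p\in\alpha$, $q\in\beta$ (the configurations with $p,q$ on $\alpha,\gamma$ or on $\beta,\gamma$, where the shared vertex is causally extreme rather than intermediate, are handled identically using the time-dual vertex). First apply the cevian hypothesis to the \emph{original} triangle with the point $q\in\beta$ and its opposite vertex $x$, obtaining $\tau(x,q)\leq\bar\tau(\bar x,\bar q)$. Next form the sub-triangle $(x,y,q)$, whose sides satisfy timelike size bounds by Remark \ref{GeodesicPropertiesU}, and apply the cevian hypothesis to it with $p$ lying on the side from $x$ to $y$ and $q$ the opposite vertex; this yields $\tau(p,q)\leq\widetilde\tau(\widetilde p,\widetilde q)$, where $\triangle\widetilde x\widetilde y\widetilde q$ is a comparison triangle for $(x,y,q)$ in $\mb{M}_k^L$ and $\widetilde p$ is the comparison point of $p$.

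It remains to transfer the bound $\widetilde\tau(\widetilde p,\widetilde q)$ to the original comparison triangle $\triangle\bar x\bar y\bar z$, and here the Hinge and rescaling lemmas do the work. The model triangles $\triangle\widetilde x\widetilde y\widetilde q$ and $\triangle\bar x\bar y\bar q$ share the two sides meeting at $y$, of lengths $\tau(x,y)$ and $\tau(y,q)$, while $\widetilde\tau(\widetilde x,\widetilde q)=\tau(x,q)\leq\bar\tau(\bar x,\bar q)$; so the Hinge Lemma \ref{HingeLemmaMk} gives $\measuredangle\widetilde x\widetilde y\widetilde q\leq\measuredangle\bar x\bar y\bar q$ for the angles at $y$. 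Since $\widetilde p$ and $\bar p$ occupy the same relative position on the corresponding sides, Lemma \ref{RescalAngulo} scales both angles by the identical factor $\tau(p,y)/\tau(x,y)$, whence $\measuredangle\widetilde p\widetilde y\widetilde q\leq\measuredangle\bar p\bar y\bar q$. A second application of the Hinge Lemma to $\triangle\widetilde p\widetilde y\widetilde q$ and $\triangle\bar p\bar y\bar q$---which share the sides of lengths $\tau(p,y)$ and $\tau(y,q)$ at $y$---converts this angle inequality into $\widetilde\tau(\widetilde p,\widetilde q)\leq\bar\tau(\bar p,\bar q)$. Chaining with the earlier step gives $\tau(p,q)\leq\bar\tau(\bar p,\bar q)$, as required.

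I expect the main obstacle to be the bookkeeping that keeps this engine uniform across all configurations: in each case one must pick the auxiliary vertex and sub-triangle so that (a) the sub-triangle falls under Remark \ref{GeodesicPropertiesU} and hence inherits the size bounds for $k$, including the degenerate endpoint instances where the auxiliary point coincides with $x$ or $z$; (b) the causal orientation selects the correct alternative (i) or (ii) of the hypothesis; and (c) the two pairs of equal corresponding sides demanded by the Hinge Lemma are genuinely present, so that the angle comparison is allowed to propagate. Establishing the analogous constructions for the two remaining side-pairings, where the shared vertex lies to the causal past or future of both $p$ and $q$, is where most of the care is needed.
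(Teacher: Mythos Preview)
Your proposal is correct and follows essentially the same route as the paper: reduce to the case of two interior points on distinct sides, form the sub-triangle through the common vertex and one of the two points, apply the cevian hypothesis once on the big triangle and once on the sub-triangle, and then transfer the bound via the Hinge Lemma~\ref{HingeLemmaMk} together with the rescaling Lemma~\ref{RescalAngulo}. The paper organizes the case split by the common vertex ($y$, then $x$, then $z$) rather than by side-pairs, and in the representative case it labels the point on $\beta$ as $p$ and the point on $\alpha$ as $q$, but the sub-triangle it forms and the chain Hinge--rescale--Hinge are exactly yours.
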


\begin{proof}
 We will prove that $U$ is a comparison neighborhood with respect to $\mb{M}_{k}^{L}$.  Thus, take two points $p$, $q$ on the sides of triangle $(x,y,z)$, so $\bar{p}$, $\bar{q}$ are the corresponding points on triangle $\triangle\bar{x}\bar{y}\bar{z}$.
 
 We first focus on vertex $y$. Suppose $p\in \beta$ and $q\in\alpha$. Notice that $0<\tau(q,p)<\overline{\tau} (\overline{q},\overline{p})$ and observe that $x\ll y\ll p$, then the sides of triangle $(x,y,p)$ satisfy timelike size bounds for $k$. Thus, let $\triangle x_1y_1p_1$ be a comparison triangle in $\mb{M}_{k}^{L}$ for triangle $(x,y,p)$.

\begin{figure}[h!]
    \centering{
    \includegraphics[scale=.3]{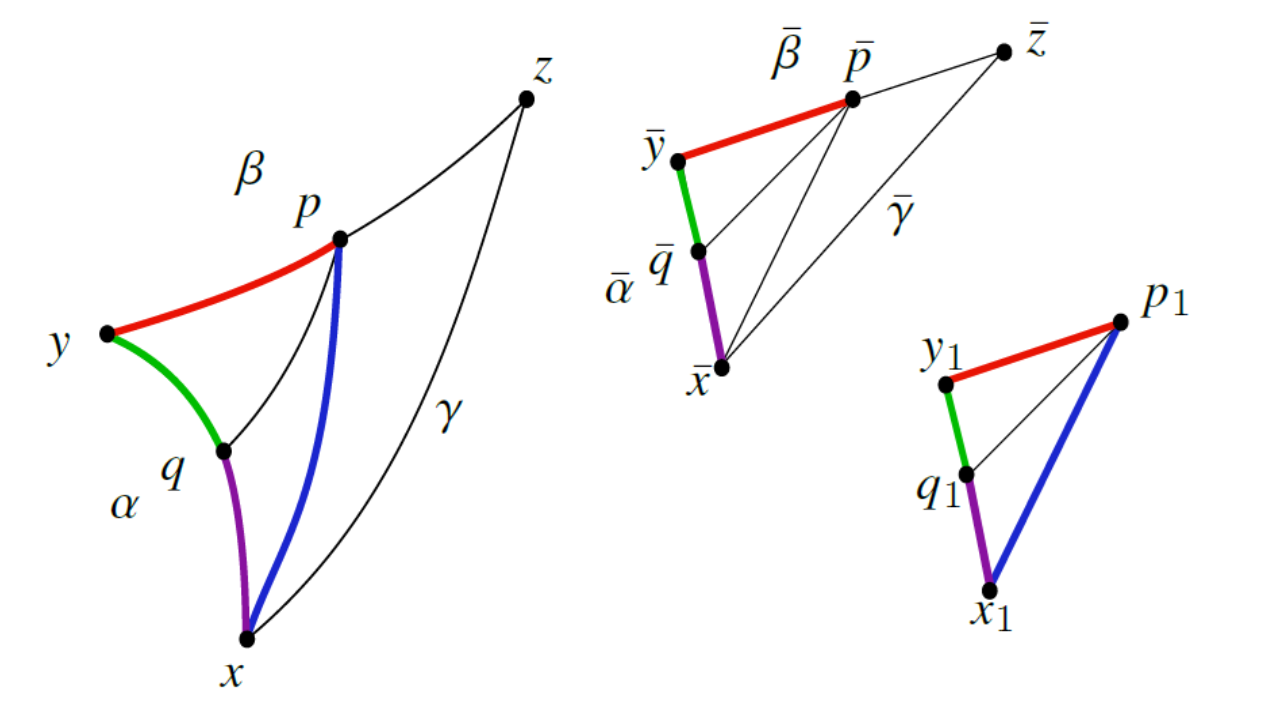}
    \caption{Triangle $(x,y,z)$ and comparison triangles $\triangle \bar{x}\bar{y}\bar{z}$ and $\triangle x_1y_1p_1$.}
    }
    \end{figure}

    By hypothesis we have $\tau(x,p)\leq \bar{\tau}(\bar{x},\bar{p})$. Then
    \[
    \bar{\tau}(x_1,p_1) = \tau(x,p) \leq \bar{\tau}(\bar{x},\bar{p}),
    \]
    and therefore by Lema \ref{HingeLemmaMk} we have $\measuredangle \bar{p}\bar{y}\bar{x}\geq \measuredangle p_1y_1x_1$. On the other hand, using Lemma \ref{RescalAngulo} we deduce that
    \[
    \measuredangle \bar{p}\bar{y}\bar{q} = \frac{\bar{\tau}(\bar{q},\bar{y})}{\bar{\tau}(\bar{x},\bar{y})} \measuredangle \bar{p}\bar{y}\bar{x} \ \mbox{and} \ \measuredangle p_1y_1q_1 = \dst\frac{\overline{\tau}(q_1,y_1)}{ \overline{\tau}(x_1,y_1)} \measuredangle p_1x_1y_1.
    \]
    Since $\bar{\tau}(\bar{x},\bar{y}) = \tau(x,y) = \bar{\tau}(x_1,y_1)$ and $\bar{\tau}(\bar{q},\bar{y}) = \tau(q,y) = \bar{\tau}(q_1,y_1)$ we conclude $\measuredangle \bar{p}\bar{y}\bar{q} \geq \measuredangle p_1y_1q_1$. Again, by Lemma \ref{HingeLemmaMk} we have $\bar{\tau}(q_1,p_1) \leq \bar{\tau}(\bar{q},\bar{p})$ and therefore
    \[
    \tau(q,p) \leq
    \bar{\tau}(q_1,p_1) \leq \bar{\tau}(\bar{q},\bar{p}).
    \]

Now we look at vertex $x$. Suppose $p\in\alpha$ and $q\in\gamma$. If $\tau(p,q)=0$ then $\tau(p,q)\leq \bar{\tau}(\bar{p},\bar{q})$. So take $\tau(p,q)>0$, which implies $x\ll p \ll q$ and the sides of triangle $(x,q,p)$ satisfy timelike size bounds for $k$. Let $(x_1,p_1,z_1)$ be a comparison triangle for $(x,p,z)$ and $q_1$ the corresponding point for $q$ in triangle $(x_1,p_1,z_1)$.
    
 \begin{figure}[ht]
    \centering{
    \includegraphics[scale=.3]{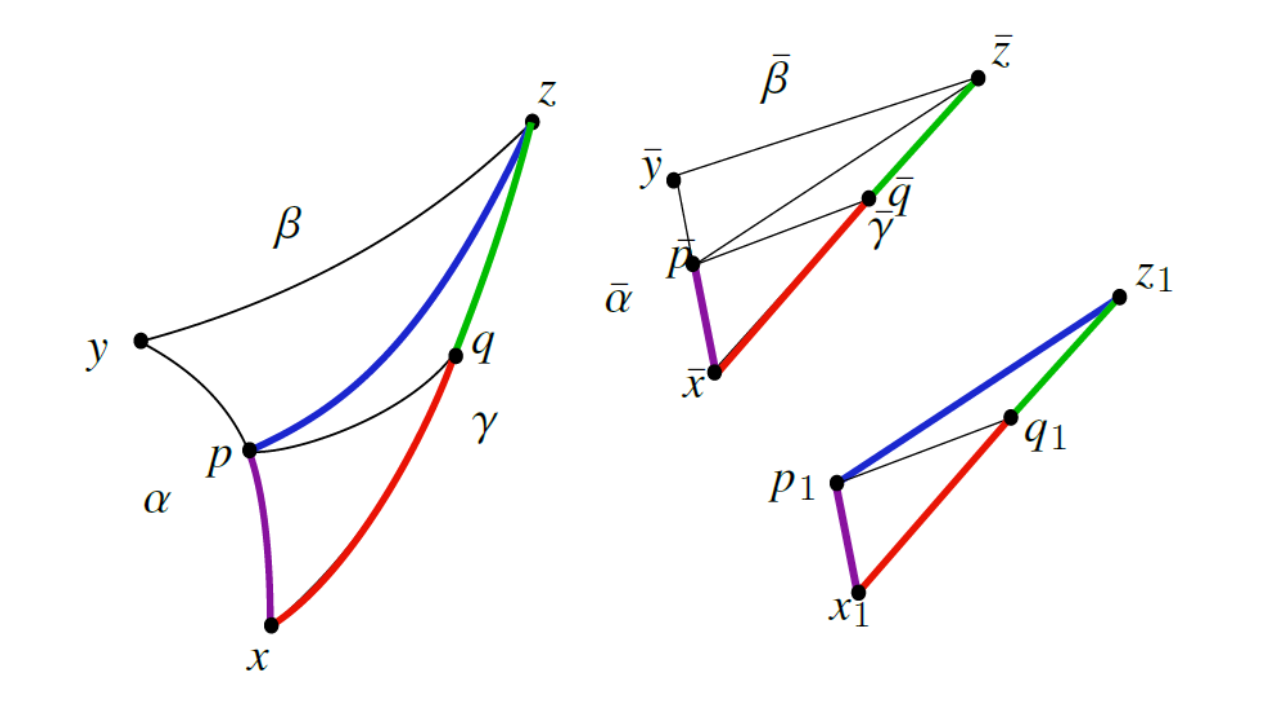}
    \caption{Triangle $(x,y,z)$ and comparison triangles $\triangle \bar{x}\bar{y}\bar{z}$ and $\triangle x_1p_1z_1$.}
    }
    \end{figure}    
    
    Then $\bar{\tau}(p_1,z_1) = \tau(p,z) \leq \bar{\tau}(\bar{p}, \bar{z})$, which implies $\mangle p_1x_1z_1 \leq \mangle \bar{p}\bar{x}\bar{z}$ because of Lemma \ref{HingeLemmaMk}. Following a  similar argument as in vertex $y$ we show that $\mangle p_1x_1q_1 \leq \mangle \bar{p}\bar{x}\bar{q}$ and therefore $\bar{\tau}(p_1,q_1)\leq \bar{\tau}(\overline{p}, \bar{q})$, again by Lemma \ref{HingeLemmaMk}. On the other hand $(x,p,z)$ we have $\tau(p,q) \leq \bar{\tau}(p_1,q_1)$, thus
    \[
    \tau(p,q) \leq \overline{\tau}(p_1,q_1) \leq \bar{\tau}(\bar{p}, \bar{q}).
    \]
    Again, the case $\tau(q,p)=0$ is trivial. If $\tau(q,p)>0$ we have $q\ll p\ll y$ and therefore this case follows an analog path as the case $\tau(p,q)>0$ just analyzed, where the point $y$ plays the role of $x$.  Thus $\tau(q,p)\leq \bar{\tau}(\bar{q},\bar{p})$.

Finally, notice that the analysis of vertex $z$ is completely analogous to the one preformed on vertex $x$, thus completing the proof. 
\end{proof}


\section{Alexandrov's convexity property}\label{sec:Alex}

 Let us recall that if two future directed timelike curves $\alpha,\beta :[0,a]\to M$ in a Lorentzian manifold $(M,\langle \cdot,\cdot\rangle )$ meet at a point $p=\alpha (0)=\beta (0)$, then the hyperbolic angle $\varphi$ spanned by $\alpha$ and $\beta$ at $p$ is given by the relation
\[
-\cosh \varphi = \frac{\langle \alpha^\prime (0),\beta^\prime (0)\rangle }{\vert \alpha^\prime (0)\vert \ \vert \beta^\prime (0)\vert}.
\]

In this section we provide analog formulations adapted to the context of Lorentzian pre-length spaces. The main idea is to construct a function that plays the same role as $-\cosh \varphi$, being its monotonicity the most relevant feature for comparison purposes.

\begin{definition}
Let $\triangle\bar{p}\bar{q}\bar{r}$ be a comparison triangle in $\mb{M}_{k}^{L}$ for a timelike geodesic triangle $(p,q,r)$ in a Lorentzian pre-length space $(X,d,\ll,\leq,\tau)$. We define the \emph{comparison angles at} $p$, $q$, $r$ by 
\[
\wangle_{k} rpq = \mangle \bar{r}\bar{p}\bar{q}\quad \wangle_{k} pqr = \mangle \bar{p}\bar{q}\bar{r}, \quad \wangle_{k} qrp = \mangle \bar{q}\bar{r}\bar{p},
\]
respectively.
\end{definition}

\begin{definition}\label{preAngle}
Given a compatible neighborhood $U$ in a Lorentzian pre-length space $(X,d,\ll,\leq,\tau)$, take a timelike geodesic triangle $(x,y,z)$ in $U$, realized by maximal causal curves  ${\alpha}:[0,a]\to X$, ${\beta}:[0,b]\to X$, ${\gamma}:[0,c]\to X$ satisfying timelike size bounds for $k$. We define the \emph{angle comparison functions} $\theta_{\alpha,\gamma}^{k}:(0,a]\times (0,c]\to \mb{R}$, $\theta_{\beta,\alpha}^{k}:(0,b]\times (0,a]\to \mb{R}$ and
$\theta_{\gamma,\beta}^{k}:(0,c]\times (0,b]\to \mb{R}$ as follows:
\begin{enumerate}
\item $\theta_{\alpha,\gamma}^{k}(s,t) = \dst\frac{\wangle_{k}\alpha(s)x\gamma(t)}{st}$, provided $\alpha(s)\ll \gamma(t)$ or $\gamma(t)\ll \alpha(s)$.
\item $\theta_{\gamma,\beta}^{k}(s,t) = \dst\frac{\wangle_{k}{\gamma}^*(s)z{\beta}^*(t)}{st}$, provided ${\gamma}^*(s)\ll {\beta}^*(t)$ or  ${\beta}^*(t)\ll {\gamma}^*(s)$.
\item $\theta_{\beta,\alpha}^{k}(s,t) = \dst\frac{\wangle_{k}\beta(s)y{\alpha}^*(t)}{st}$, for every $(s,t)\in (0,b]\times (0,a]$.
\end{enumerate}
where $\alpha^*:[0,a]\to X$, $\alpha (t)=\alpha (a-t)$ denotes the reverse curve of $\alpha$.
\end{definition}

It is immediate for the definition that $\theta_{\beta,\alpha}^{k}(s,t)>0$ and $\theta_{\alpha,\gamma}^{k}(s,t)<0$,  $\theta_{\gamma,\beta}^{k}(s,t)<0$.

\begin{remark}
Notice that because the relation $\ll$ is open and $\tau$ is continuous in $U$, we can always find small enough $s,t$ such that $\alpha(s)\ll \gamma(t)$ or $\gamma(t)\ll \alpha(s)$ so the conditions in part (1) of the above definition are met. The same applies for parts (2) and (3).
\end{remark}

An straightforward computation shows that when we apply Definition \ref{preAngle} to a geodesic triangle in $\mathbb{R}^2_1$, viewed as a Lorentzian pre-length space, we obtain that the value of $\theta_{\alpha,\gamma}^{0}(s,t)$ is constant ---independent of $(s,t)$--- and $\cosh^{-1}(-\theta_{\alpha,\gamma}^{0})$ is precisely the hyperbolic angle $\varphi$ described above. A similar scenario holds for the functions $\theta_{\alpha,\gamma}^{0}$ and $\theta_{\beta,\alpha}^{0}$. The same is true for the model spaces $\mb{M}_{k}^{L}$ with $k\neq 0$. Thus, these functions are natural candidates as comparison functions for Lorentzian length spaces.  To be able to show their monoticity, a lemma is in order.

\begin{lemma}\label{ComparisonPreLemma}
Let $(X,d,\ll,\leq,\tau)$ be a Lorentzian pre-length space and suppose it has timelike curvature bounded below by $k$. Let $U$ be a comparison neighborhood. Suppose that $(x,y,z)$ is a timelike geodesic triangle in $U$, realized by maximal causal curves $\alpha$, $\beta$, $\gamma$ whose lengths satisfy timelike size bounds for $k$.
\begin{enumerate}
\item For every $(s,t)\in (0,a]\times (0,c]$ such that $\alpha(s)\ll \gamma(t)$ or $\gamma(t)\ll \alpha(s)$ the following inequalities hold:
    \begin{enumerate}
    \item If $\alpha(s)\ll \gamma(t)$, then for all $s\geq s'$ and $t'\geq t$ we have
        \[
        \dst\frac{\wangle_{k} \alpha(s)x\gamma(t)}{s} \geq \dst\frac{\wangle_{k} \alpha(s')x\gamma(t)}{s'} \ \ \mbox{and} \ \
        \dst\frac{\wangle_{k} \alpha(s)x\gamma(t')}{t'} \geq \dst\frac{\wangle_{k} \alpha(s)x\gamma(t)}{t},
        \]
    \item If $\gamma(t)\ll \alpha(s)$, then for all $s'\geq s$ and $t\geq t'$ we have
        \[
        \dst\frac{\wangle_{k} \alpha(s')x\gamma(t)}{s'} \geq \dst\frac{\wangle_{k} \alpha(s)x\gamma(t)}{s} \ \ \mbox{and} \ \
        \dst\frac{\wangle_{k} \alpha(s)x\gamma(t)}{t} \geq \dst\frac{\wangle_{k} \alpha(s)x\gamma(t')}{t'},
        \]
    \end{enumerate}
\item For every $(s,t)\in (0,c]\times (0,b]$ such that ${\gamma}^*(s)\ll {\beta}^*(t)$ or ${\beta}^*(t)\ll {\gamma}^*(s)$ the following inequalities hold:
    \begin{enumerate}
    \item If ${\gamma}^*(s)\ll {\beta}^*(t)$, then for all $s\geq s'$ and $t'\geq t$ we have
        \[
        \dst\frac{\wangle_{k} {\gamma}^*(s)z{\beta}^*(t)}{s} \geq \dst\frac{\wangle_{k} {\gamma}^*(s')z{\beta}^*(t)}{s'} \ \ \mbox{and} \ \
        \dst\frac{\wangle_{k} {\gamma}^*(s)z{\beta}^*(t')}{t'} \geq \dst\frac{\wangle_{k} {\gamma}^*(s)z{\beta}^*(t)}{t},
        \]
    \item If  ${\beta}^*(t)\ll {\gamma}^*(s)$, then for all $s'\geq s$ and $t\geq t'$ we have
        \[
        \dst\frac{\wangle_{k} {\gamma}^*(s')z{\beta}^*(t)}{s'} \geq \dst\frac{\wangle_{k} {\gamma}^*(s)z{\beta}^*(t)}{s} \ \ \mbox{and} \ \
        \dst\frac{\wangle_{k} {\gamma}^*(s)z{\beta}^*(t)}{t} \geq \dst\frac{\wangle_{k} {\gamma}^*(s)z{\beta}^*(t')}{t'},
        \]
    \end{enumerate}
\item For every $(s,t)\in (0,b]\times t\in(0,a]$ we have
    \[
    \dst\frac{\wangle_{k} \beta(s)y{\alpha}^*(t)}{s} \geq \dst\frac{\wangle_{k} \beta(s')y{\alpha}^*(t)}{s'} \ \ \mbox{and} \ \
    \dst\frac{\wangle_{k} \beta(s)y{\alpha}^*(t')}{t'} \geq \dst\frac{\wangle_{k} \beta(s)y{\alpha}^*(t)}{t},
    \]
    for all $s\geq s'$ and $t'\geq t$.
\end{enumerate}
\end{lemma}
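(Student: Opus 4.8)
The plan is to prove all three parts by a single mechanism and to write out the first inequality of part (1)(a) in full, indicating how the rest follow by symmetry. Throughout I fix the triangle $(x,y,z)$ and work with the sub-triangles cut out by restricting the sides; by Remark \ref{GeodesicPropertiesU}(2) each such sub-triangle again satisfies timelike size bounds for $k$, so every comparison triangle I invoke exists. I first record that the chronology conditions propagate by the push-up property: if $\alpha(s)\ll\gamma(t)$ and $s'\le s$, then $x\ll\alpha(s')\ll\alpha(s)\ll\gamma(t)$ yields $\alpha(s')\ll\gamma(t)$, and likewise $\alpha(s)\ll\gamma(t')$ whenever $t'\ge t$; hence each comparison angle appearing in (1)(a) is well defined.

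Fix $(s,t)$ with $\alpha(s)\ll\gamma(t)$ and take $s'\le s$. The first step is to pass to the comparison triangle $\triangle\bar x\,\overline{\alpha(s)}\,\overline{\gamma(t)}$ of the sub-triangle $(x,\alpha(s),\gamma(t))$ in $\mb M_k^L$. Since $\overline{\alpha(s')}$ lies on the side from $\bar x$ to $\overline{\alpha(s)}$ at parameter $s'$, Lemma \ref{RescalAngulo}(a) gives
\[
\mangle\overline{\alpha(s')}\,\bar x\,\overline{\gamma(t)}=\frac{s'}{s}\,\mangle\overline{\alpha(s)}\,\bar x\,\overline{\gamma(t)}=\frac{s'}{s}\,\wangle_k\alpha(s)x\gamma(t),
\]
so that $\dfrac{\mangle\overline{\alpha(s')}\,\bar x\,\overline{\gamma(t)}}{s'}=\dfrac{\wangle_k\alpha(s)x\gamma(t)}{s}$. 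The second step compares this in-triangle angle with the genuine comparison angle $\wangle_k\alpha(s')x\gamma(t)$, which is the included angle at the image of $x$ in the comparison triangle of $(x,\alpha(s'),\gamma(t))$. These two triangles both have the sides issuing from $x$ of lengths $s'$ and $t$; their opposite sides are $\tau(\alpha(s'),\gamma(t))$ and $\bar\tau(\overline{\alpha(s')},\overline{\gamma(t)})$, respectively. Because $U$ is a comparison neighborhood for curvature bounded below, Definition \ref{defi:curvbounds} applied to the points $\alpha(s')$ and $\gamma(t)$ on the sides of $(x,\alpha(s),\gamma(t))$ gives $\tau(\alpha(s'),\gamma(t))\le\bar\tau(\overline{\alpha(s')},\overline{\gamma(t)})$, i.e. the in-triangle configuration carries the longer opposite side. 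The Hinge Lemma \ref{HingeLemmaMk} then forces $\wangle_k\alpha(s')x\gamma(t)\le\mangle\overline{\alpha(s')}\,\bar x\,\overline{\gamma(t)}$. Dividing by $s'$ and substituting the identity from the first step yields
\[
\frac{\wangle_k\alpha(s')x\gamma(t)}{s'}\le\frac{\wangle_k\alpha(s)x\gamma(t)}{s},
\]
which is the first inequality. The second inequality of (1)(a), in the variable $t$, is identical after exchanging the roles of the two sides issuing from $x$: one rescales along $\bar x\,\overline{\gamma(t')}$ and again combines the curvature bound with Lemma \ref{HingeLemmaMk}.

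The remaining items are proved verbatim after the obvious substitutions. Part (1)(b), where $\gamma(t)\ll\alpha(s)$, is the same computation with the timelike order reversed, which is exactly why the monotone ranges there become $s'\ge s$ and $t\ge t'$. Parts (2) and (3) repeat the argument at the vertices $z$ and $y$ using the reversed parametrizations $\gamma^*,\beta^*,\alpha^*$ and items (c), (b) of Lemma \ref{RescalAngulo}; one only has to respect the sign conventions recorded after Definition \ref{preAngle} ($\theta^k_{\beta,\alpha}>0$ while $\theta^k_{\alpha,\gamma},\theta^k_{\gamma,\beta}<0$), which do not alter the direction dictated by the Hinge Lemma. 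I expect the main obstacle to be bookkeeping rather than conceptual: one must keep the in-triangle angle $\mangle\overline{\alpha(s')}\,\bar x\,\overline{\gamma(t)}$, computed inside the comparison triangle of the \emph{larger} sub-triangle via Lemma \ref{RescalAngulo}, cleanly separated from the comparison angle $\wangle_k\alpha(s')x\gamma(t)$, which is built from its \emph{own} comparison triangle, and one must check that the two triangles fed into Lemma \ref{HingeLemmaMk} share precisely the correct pair of adjacent sides, so that the single application of $\tau\le\bar\tau$ controls the remaining side in the intended direction.
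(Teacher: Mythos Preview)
Your argument is correct and follows essentially the same route as the paper: both proofs take the comparison triangle of the larger sub-triangle $(x,\alpha(s),\gamma(t))$, use the curvature bound to compare $\tau(\alpha(s'),\gamma(t))$ with the corresponding $\bar\tau$ inside that comparison triangle, apply the Hinge Lemma \ref{HingeLemmaMk}, and then use the rescaling identity of Lemma \ref{RescalAngulo} to convert the angle inequality into the desired quotient inequality. Your treatment of the push-up step, the size-bound check via Remark \ref{GeodesicPropertiesU}(2), and the reduction of the remaining cases to symmetry all match the paper's handling.
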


\begin{proof}
For (1a), take $s\geq s'$, then $x\ll \alpha(s')\ll \alpha(s) \ll \gamma(t)$ and therefore the sides of triangle $(x,\alpha(s'),\gamma(t))$ satisfy timelike size bounds for $k$. Let $\triangle x_1p_1q_1)$ and $\triangle x_2p_2q_2$ be  comparison triangles for $(x,\alpha(s),\gamma(t))$ and $(x,\alpha(s'),\gamma(t))$, respectively.

\begin{figure}[ht]
    \centering{
    \includegraphics[scale=.3]{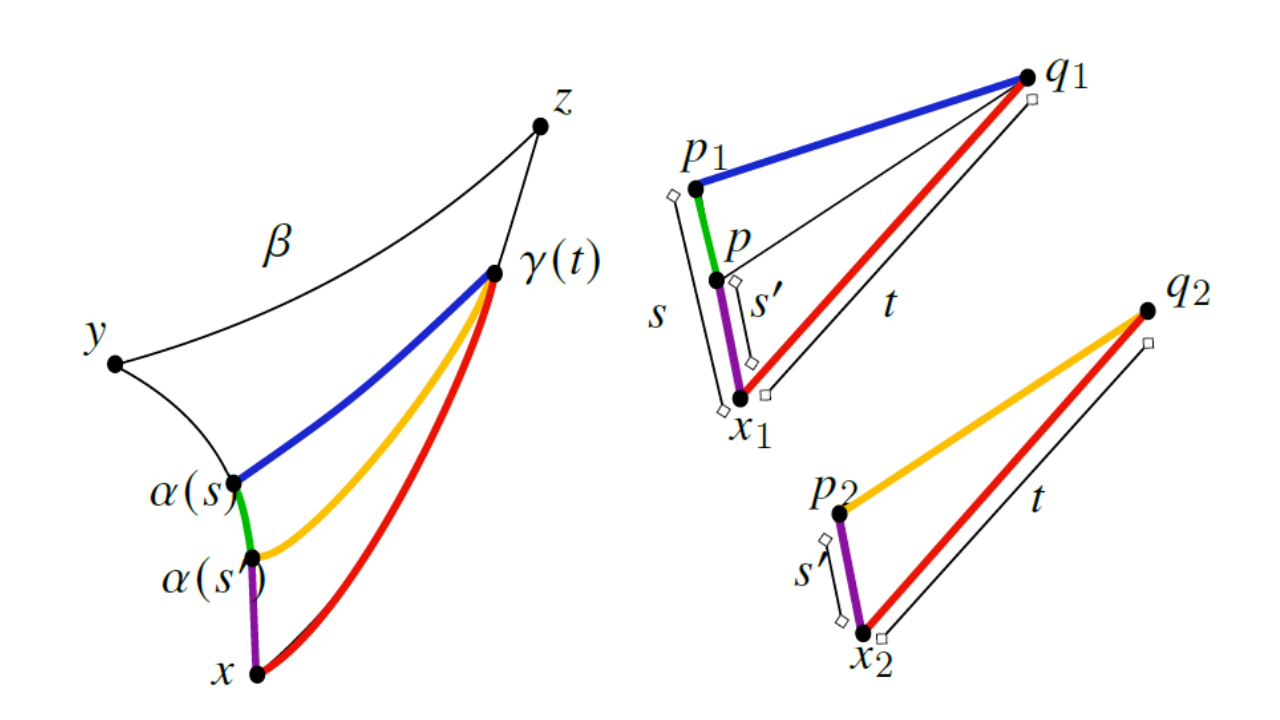}
    \caption{Triangle $(x,y,z)$ and comparison triangles $\triangle x_1p_1q_1$ and $\triangle x_2p_2q_2$.}
    }
\end{figure}

Let $p$ be the corresponding point in triangle $\triangle x_1p_1q_1$ for $\alpha(s')$ and denote $\theta_1= \measuredangle p_1x_1q_1$, $\theta_2  = \measuredangle p_2x_2q_2$. Since $U$ is a comparison neighborhood with respect to $\mb{M}_{k}^{L}$ we have
\[
\bar{\tau}(p,q_1) \geq \tau(\alpha(s'),\gamma(t)) = \bar{\tau}(p_2,q_2),
\]
thus by Lemma \ref{HingeLemmaMk} we get $\measuredangle px_1q_1 \geq \measuredangle p_2x_2q_2=\theta_2$. By Lemma \ref{RescalAngulo} we have $\dst\frac{s'}{s}\theta_1 = \mangle_k px_1q_1 \geq \theta_2$. This implies
\[
\dst\frac{\wangle_{k} \alpha(s)x\gamma(t)}{s} \geq \dst\frac{\wangle_{k} \alpha(s')x\gamma(t)}{s'}.
\]
For part of (1b), fix $t'\geq t$. Then $x\ll \alpha(s) \ll \gamma(t) \ll \gamma(t')$ and the sides of $(x,\alpha(s),\gamma(t'))$ satisfy timelike size bounds for $k$. Let $\triangle x_1p_1q_1$ and $\triangle x_2p_2q_2$ be comparison triangles for triangles $(x,\alpha(s),\gamma(t'))$ and $(x,\alpha(s),\gamma(t))$, respectively.
\begin{figure}[ht]
    \centering{
    \includegraphics[scale=.3]{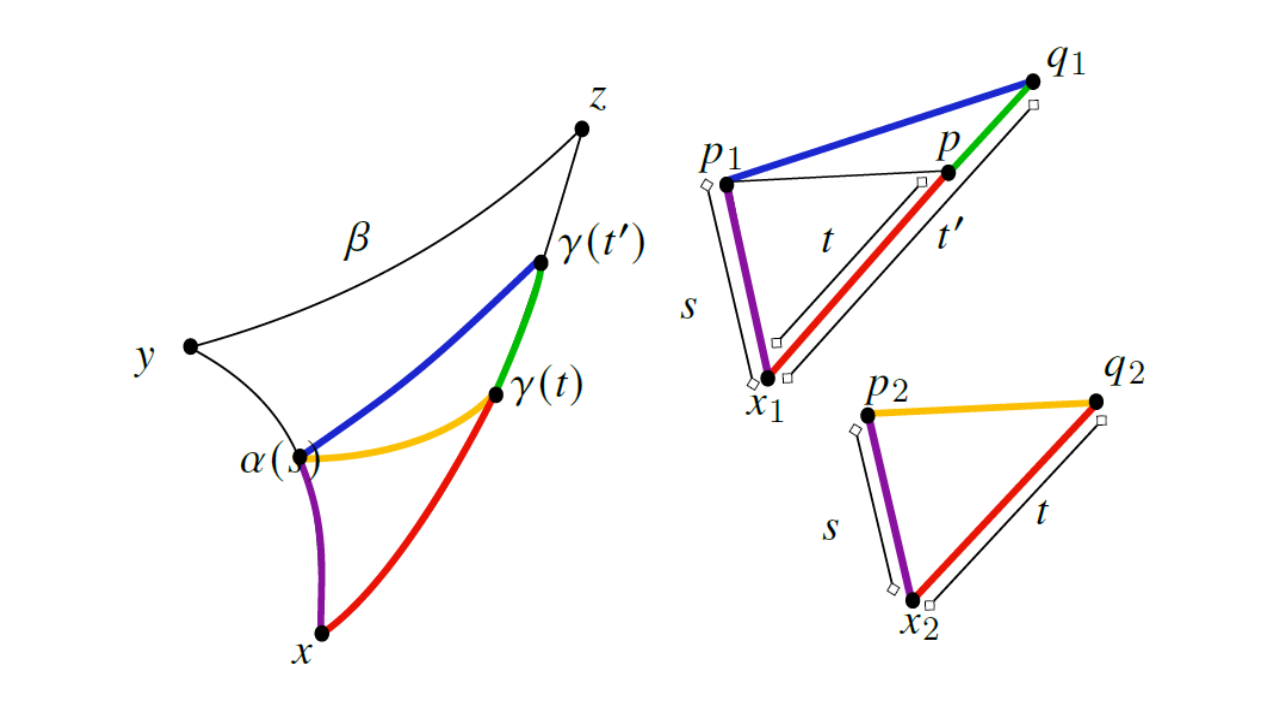}
    \caption{Triangle $(x,y,z)$ and comparison triangles $\triangle x_1p_1q_1$ and $\triangle x_2p_2q_2$.}
    }
\end{figure}
Denote by $p$ the corresponding point for $\gamma(t)$ in triangle $\triangle x_1p_1q_1$ and take $\theta_1 = \measuredangle p_1x_1q_1$, $\theta_2=\measuredangle p_2x_2q_2$. Just like in the previous case we have
\[
\bar{\tau}(p_1,p) \geq \tau(\alpha(s),\gamma(t)) = \bar{\tau}(p_2,q_2)
\]
and applying Lemma \ref{HingeLemmaMk} we deduce $\measuredangle p_1x_1p \geq \measuredangle p_2x_2q_2 = \theta_2$. Notice $\dst\frac{t}{t'}\theta_1 = \measuredangle p_1x_1p \geq \theta_2$ by Lemma \ref{RescalAngulo}, which implies
\[
\dst\frac{\wangle_{k} \alpha(s)x\gamma(t')}{t'} \geq  \dst\frac{\wangle_{k} \alpha(s)x\gamma(t)}{t}.
\]

Alternatively, if  $\gamma(t)\ll \alpha(s)$, then $\gamma(t)\ll \alpha(s')$ for all $s'\geq s$. Let us take the comparison triangles $\triangle x_1q_1p_1$ and $\triangle x_2q_2p_2$ for triangles $(x,\gamma(t),\alpha(s'))$ and $(x,\gamma(t),\alpha(s))$, respectively. Also, the point $p$ is the corresponding one for $\alpha(s)$ in triangle $\triangle x_1q_1p_1$. Thus by curvature conditions we have
\[
\overline{\tau}(q_2,p_2) = \tau(\gamma(t),\alpha(s)) \leq \overline{\tau}(\gamma(t),\alpha(s')).
\]
Therefore
\[
\frac{s}{s'} \wangle_{k} \alpha(s')x\gamma(t) = \frac{s}{s'}\measuredangle p_1x_1q_1 = \measuredangle px_1q_1 \geq \measuredangle p_2x_2q_2 = \wangle_{k} \alpha(s)x\gamma(t),
\]
and hence  
\[\dst\frac{\wangle_{k} \alpha(s')x\gamma(t)}{s'} \geq \dst\frac{\wangle_{k} \alpha(s)x\gamma(t)}{s}.\] 
On the other hand, if $t\geq t'$ then $\gamma(t')\ll \alpha(s)$. Choose comparison triangles $\triangle x_1q_1p_1$ and $\triangle x_2q_2p_2$ for triangles $(x,\gamma(t),\alpha(s))$ and $(x,\gamma(t'),\alpha(s))$, respectively. Now, the point $p$ on triangle $\triangle x_1q_1p_1$ the corresponding point for $\gamma(t')$. Since
\[
\overline{\tau}(q_2,p_2) = \tau(\gamma(t'),\alpha(s)) \leq \overline{\tau}(p,p_1),
\]
we obtain
\[
\frac{t'}{t}\wangle_{k} \alpha(s)x\gamma(t) =  \frac{t'}{t}\measuredangle p_1x_1q_1 = \measuredangle p_1x_1p \geq \measuredangle p_2x_2q_2 =  \wangle_{k} \alpha(s)x\gamma(t').
\]
In conclusion 
\[\dst\frac{\wangle_{k} \alpha(s)x\gamma(t)}{t} \geq \dst\frac{\wangle_{k} \alpha(s)x\gamma(t')}{t'}.\]

Now, let us focus in cases (3a) and (3b). If $s\geq s'$ then ${\alpha}^*(t)\ll \beta(s')$. Let us take the comparison triangles $\triangle p_1y_1q_1$ and $\triangle p_2y_2q_2$ for triangles $({\alpha}^*(t), y, \beta(2))$ and $({\alpha}*(t), y, \beta(s'))$, respectively (here observe that $\tau({\alpha}^*(t), y)=t$ by definition). Set $p$ the corresponding point for $\beta(s')$ in triangle $\triangle p_1y_1q_1$. Then
\[
\overline{\tau}(p_2,q_2) = \tau({\alpha}^*(t),\beta(s')) \leq \overline{\tau}(p_1,p),
\]
it means
\[
\frac{s'}{s}\wangle \beta(s)y{\alpha}^*(t) = \frac{s'}{s}\measuredangle p_1y_1q_1 = \measuredangle p_1y_1p \geq \measuredangle p_2y_2q_2 = \wangle \beta(s')y{\alpha}^*(t). 
\]
Hence 
\[\dst\frac{\wangle_{k} \beta(s)y{\alpha}^*(t)}{s} \geq \dst\frac{\wangle_{k} \beta(s')y{\alpha}^*(t)}{s'}.\]
In case that $t'\geq t$ we take the comparison triangles $\triangle p_1y_1q_1$ and $\triangle p_2y_2q_2$ for triangles $({\alpha}^*(t'), y, \beta(s))$ and $(\alpha(t), y,\beta(s))$, respectively (here $\tau({\alpha}^*(t'),y)=t'$ and $\tau({\alpha}^*(t),y)=t$). If $p$ is the corresponding point for $\alpha(t)$ in triangle $\triangle p_1y_1q_1$, then
\[
\overline{\tau}(p_2,q_2) = \tau({\alpha}^*(t),\beta(s)) \leq \overline{\tau}(p,q_1).
\]
This last inequality implies
\[
\frac{t}{t'}\wangle {\alpha}^*(t')y\beta(s) = \frac{t}{t'}\measuredangle p_1y_1q_1 =\measuredangle py_1q_1 \geq \measuredangle p_2y_2q_2 = \wangle {\alpha}^*(t)y\beta(s),
\]
thus 
\[\dst\frac{\wangle_{k} \beta(s)y\tilde{\alpha}(t')}{t'} \geq \dst\frac{\wangle_{k} \beta(s)y\tilde{\alpha}(t)}{t}.
\]

Finally, cases (2a) and  (2b) are analogous to (1a) and (1b) and the proof is complete.

\end{proof}

In Alexandrov geometry the monotonicity of the angle comparison functions is equivalent to the definition of curvature bounds (see for example Definition 4.3.1 of \cite{Burago} or Section 2.2 of \cite{Shiohama}). Hence, this monotonicity property is termed as \emph{the local version of the Alexandrov convexity}. We proceed to establish a similar result in the Lorentzian context.

\begin{theorem}[Angle monotonicity] \label{MonotonicityCriterion}
Let $(X,d,\ll,\leq,\tau)$ be a Lorentzian pre-length space and suppose it has timelike curvature bounded below by $k$. Let $U$ be a comparison neighborhood. Suppose that $(x,y,z)$ is a timelike geodesic triangle in $U$, realized by maximal causal curves $\alpha$, $\beta$, $\gamma$ whose lengths satisfy timelike size bounds for $k$. For every $s,s'\in (0,\tau(x,y)]$ and $t,t'\in(0,\tau(x,z)]$ such that $s'\leq s$, $t'\leq t$ and
\begin{enumerate}
\item  $\alpha(s)\ll \gamma(t)$ or $\gamma(t) \ll \alpha(s)$,
\item  $\alpha(s')\ll \gamma(t')$ or $\gamma(t') \ll \alpha(s')$,
\end{enumerate}
we have the following monotonicity condition
\[
\theta^k_{\alpha,\gamma}(s',t')\leq \theta^k_{\alpha,\gamma}(s,t).
\]
Similar inequalities apply for functions $\theta^{k}_{\gamma,\beta}$ and $\theta^{k}_{\beta,\alpha}$.
\end{theorem}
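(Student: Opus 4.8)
The plan is to obtain the joint monotonicity by interpolating between $(s',t')$ and $(s,t)$ through an intermediate pair, thereby reducing everything to the one-variable monotonicity already contained in Lemma \ref{ComparisonPreLemma}. First I would rewrite that lemma in terms of $\theta^{k}_{\alpha,\gamma}(s,t)=\wangle_{k}\,\alpha(s)x\gamma(t)/(st)$: dividing the two inequalities of part (1a) by $t$ and by $s$ respectively shows that, on the region where $\alpha(s)\ll\gamma(t)$, the function $\theta^{k}_{\alpha,\gamma}$ is non-decreasing in $s$ and non-decreasing in $t$ separately; dividing those of part (1b) gives the same conclusion on the region where $\gamma(t)\ll\alpha(s)$. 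Thus inside either chronological regime $\theta^{k}_{\alpha,\gamma}$ increases with each coordinate.

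Next I would set up the interpolation. Given admissible pairs $(s,t)$ and $(s',t')$ with $s'\le s$ and $t'\le t$, the push-up property guarantees that at least one intermediate pair is again admissible: if $\alpha(s)\ll\gamma(t)$ then $\alpha(s')\ll\alpha(s)\ll\gamma(t)$, so $(s',t)$ is admissible, while if $\gamma(t)\ll\alpha(s)$ then $\gamma(t')\ll\gamma(t)\ll\alpha(s)$, so $(s,t')$ is admissible. Whenever both endpoints of a one-variable step lie in the same regime, the corresponding inequality is immediate from the previous paragraph, and chaining the two steps yields $\theta^{k}_{\alpha,\gamma}(s',t')\le\theta^{k}_{\alpha,\gamma}(s,t)$. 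This disposes of every configuration in which $(s,t)$ and $(s',t')$ belong to the same regime.

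The hard part, and the step I expect to be the main obstacle, is when the chronological relation switches between the two endpoints, so that one value of $\theta^{k}_{\alpha,\gamma}$ is read off a forward comparison triangle and the other off a backward one. Because the regions $\{\alpha(s)\ll\gamma(t)\}$ and $\{\gamma(t)\ll\alpha(s)\}$ are separated by a band of spacelike-related pairs, any coordinatewise-increasing interpolation is forced to cross this band, and within-regime monotonicity alone does not bridge it. Here I would argue by continuity of the model-space comparison angle: along the relevant one-parameter family the side of the comparison triangle opposite $\bar{x}$, whose length equals $\tau(\alpha(s),\gamma(t))$ on the forward side and $\tau(\gamma(t),\alpha(s))$ on the backward side, shrinks to a null segment as one approaches the band, and the comparison angle at $\bar{x}$ converges to that of the degenerate (null opposite side) configuration in $\mathbb{M}^{L}_{k}$. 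For $k=0$ this limiting angle is explicitly $\theta=(\ell^{2}-s^{2}-t^{2})/(2st)$ with $\ell$ the opposite side, and the corresponding law of cosines governs $k\neq 0$; matching these boundary values along the two null curves and combining with the within-regime monotonicity gives monotonicity of $\theta^{k}_{\alpha,\gamma}$ across the band. The delicate point is precisely to control these limiting boundary values by means of the curvature bound, so that no downward jump occurs as the opposite side passes through null.

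Feeding this into the interpolation proves the statement for $\theta^{k}_{\alpha,\gamma}$. The claim for $\theta^{k}_{\gamma,\beta}$ follows verbatim from part (2) of Lemma \ref{ComparisonPreLemma} by the same interpolation-and-transition argument. For $\theta^{k}_{\beta,\alpha}$ the situation is simpler still: since $\alpha^{*}(t)\ll y\ll\beta(s)$ holds for all admissible parameters, part (3) of Lemma \ref{ComparisonPreLemma} provides one-variable monotonicity on the entire domain with no regime switch, and a single interpolation through $(s',t)$ yields the joint monotonicity directly.
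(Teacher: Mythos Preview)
Your interpolation through an intermediate pair is precisely what the paper does: it writes out only the case $\alpha(s)\ll\gamma(t)$, $\alpha(s')\ll\gamma(t')$, chains $\theta^{k}_{\alpha,\gamma}(s,t)\ge\theta^{k}_{\alpha,\gamma}(s',t)\ge\theta^{k}_{\alpha,\gamma}(s',t')$ via Lemma \ref{ComparisonPreLemma}, and dismisses the remaining configurations with ``similar ideas apply for the other cases''. So for the same-regime configurations your plan and the paper's proof coincide.

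The gap in your proposal is the mixed-regime case. Your continuity argument through the null band does not close: for $k=0$ the boundary value of $\theta^k_{\alpha,\gamma}(s',\hat t)$ as the opposite side becomes null is $f(\hat t)=-\bigl((s')^2+\hat t^2\bigr)/(2s'\hat t)$, which is \emph{not} monotone in $\hat t$ (it increases on $(0,s']$, decreases on $[s',\infty)$, and peaks at $-1$). Since the backward boundary necessarily has $\hat t_-\le s'$ and the forward one $\hat t_+\ge s'$, the two limiting values $f(\hat t_-)$ and $f(\hat t_+)$ can sit in either order, so matching boundary values across the band gives no inequality in the required direction. The cleaner route---and what the paper's ``similar ideas'' presumably intends---is to rerun the proof of Lemma \ref{ComparisonPreLemma} on the regime-crossing step itself rather than pass to a limit: for $(s',t)\to(s',t')$ with $\alpha(s')\ll\gamma(t)$ but $\gamma(t')\ll\alpha(s')$, take the comparison triangle $\triangle x_1p_1q_1$ for $(x,\alpha(s'),\gamma(t))$, let $p$ be the point on side $x_1q_1$ corresponding to $\gamma(t')$, and apply the curvature bound to get $\tau(\gamma(t'),\alpha(s'))\le\bar\tau(p,p_1)$. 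Positivity of the left side forces $p\ll p_1$ in the model, whence Lemmas \ref{HingeLemmaMk} and \ref{RescalAngulo} give $\wangle_k\,\alpha(s')x\gamma(t')\le(t'/t)\,\wangle_k\,\alpha(s')x\gamma(t)$, i.e.\ $\theta^k_{\alpha,\gamma}(s',t')\le\theta^k_{\alpha,\gamma}(s',t)$, with no limit through the band needed.
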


\begin{proof}
Here we will deal with the case when $\alpha(s)\ll \gamma(t)$ and $\alpha(s')\ll \gamma(t')$ and similar ideas apply for the other cases. Thus, using Proposition \ref{ComparisonPreLemma} we get
\[
t\theta^{k}_{\alpha,\gamma}(s,t) =\dst\frac{\wangle_{k} \alpha(s)x\gamma(t)}{s} \geq \dst\frac{\wangle_{k} \alpha(s')x\gamma(t)}{s'} = t\theta^{k}_{\alpha,\gamma}(s',t),
\]
which implies $\theta^{k}_{\alpha,\gamma}(s,t)\geq \theta^{k}_{\alpha,\gamma}(s',t)$. On the other hand
\[
s'\theta^{k}_{\alpha,\gamma}(s',t) = \dst\frac{\wangle_{k} \alpha(s')x\gamma(t)}{t} \geq \dst\frac{\wangle_{k} \alpha(s')x\gamma(t')}{t'}= s'\theta^{k}_{\alpha,\gamma}(s',t'),
\]
and therefore $\theta^{k}_{\alpha,\gamma}(s',t)\geq \theta^{k}_{\alpha,\gamma}(s',t')$. Finally
\[
\theta^{k}_{\alpha,\gamma}(s,t)\geq \theta^{k}_{\alpha,\gamma}(s',t) \geq \theta^{k}_{\alpha,\gamma}(s',t').
\]
\end{proof}

As in the Alexandrov case, by assuming the monotonicity of $\theta^{k}_{\alpha,\gamma}$, $\theta^{k}_{\gamma,\beta}$ $\theta^{k}_{\beta,\alpha}$ we get the converse of Theorem \ref{MonotonicityCriterion}.

\begin{theorem}
Let $(X,d,\ll,\leq,\tau)$ be a Lorentzian pre-length space 
and  $(x,y,z)$ be a timelike geodesic triangle in a compatible neighborhood $U$, realized by maximal timelike curves $\alpha$, $\beta$, $\gamma$ whose lengths satisfy timelike size bounds for $k$. If $\theta^{k}_{\alpha,\gamma}$, $\theta^{k}_{\gamma,\beta}$ and $\theta^{k}_{\beta,\alpha}$ are increasing functions, then $(X,d,\ll,\leq,\tau)$ has timelike curvature bounded below by $k$.
\end{theorem}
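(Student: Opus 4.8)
The plan is to invoke Proposition \ref{CeviansCriteria}: since $U$ is a compatible neighborhood, it suffices to check that whenever $p$ lies on one side of $(x,y,z)$ and $v$ is the opposite vertex, the inequality $\tau(v,p)\le\bar\tau(\bar v,\bar p)$ holds when $v\ll p$, and $\tau(p,v)\le\bar\tau(\bar p,\bar v)$ holds when $p\ll v$. Thus the whole argument reduces to a finite case analysis over the three (side, opposite-vertex) pairs, and each case is handled by the same three-step mechanism, which is essentially the reverse of the proof of Theorem \ref{MonotonicityCriterion}: pass to the subtriangle cut off by the cevian, read off its comparison angle at a shared vertex as a value of one of the functions $\theta^k$, use the assumed monotonicity of that function to bound this angle by the corresponding angle of the full comparison triangle $\triangle\bar x\bar y\bar z$, and finally convert the angle inequality into the desired length inequality through the Hinge Lemma \ref{HingeLemmaMk}.

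I will carry this out in detail only for the vertex $x$ and a point $p=\beta(s_0)$ on the opposite side $\beta$; here $x\ll y\ll p$, so only the first cevian inequality is relevant. First I would form the subtriangle $(x,y,\beta(s_0))$, which obeys timelike size bounds for $k$ by Remark \ref{GeodesicPropertiesU}, and choose a comparison triangle $\triangle x_1y_1p_1$ for it. By definition its comparison angle at $y$ is $\wangle_k\beta(s_0)yx=s_0a\,\theta^k_{\beta,\alpha}(s_0,a)$, while $\wangle_k zyx=ba\,\theta^k_{\beta,\alpha}(b,a)$ for the full triangle. Monotonicity of $\theta^k_{\beta,\alpha}$ in its first variable gives $\theta^k_{\beta,\alpha}(s_0,a)\le\theta^k_{\beta,\alpha}(b,a)$, hence $\wangle_k\beta(s_0)yx\le\frac{s_0}{b}\wangle_k zyx$; and by the first rescaling identity of Lemma \ref{RescalAngulo}(b) the right-hand side is exactly $\mangle\bar p\bar y\bar x$. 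Thus the comparison angles at $y$ satisfy $\mangle x_1y_1p_1\le\mangle\bar x\bar y\bar p$. The two hinges at $y$ share adjacent side lengths $a$ (to $x$) and $s_0$ (to $p$), so the Hinge Lemma \ref{HingeLemmaMk} turns the angle inequality into $\bar\tau(x_1,p_1)\le\bar\tau(\bar x,\bar p)$, that is $\tau(x,p)\le\bar\tau(\bar x,\bar p)$, which is what was needed.

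The remaining cases are identical in spirit, only the bookkeeping changes. For $p=\alpha(s_0)$ opposite $z$ (so $p\ll z$ and the second cevian inequality is in force) I would use the subtriangle $(\alpha(s_0),y,z)$, again the angle at $y$, the monotonicity of $\theta^k_{\beta,\alpha}$ in its \emph{second} variable, and the second identity of Lemma \ref{RescalAngulo}(b). For $p=\gamma(t_0)$ opposite $y$ I would split on the causal relation between $y$ and $p$: when $y\ll p$ I use the subtriangle $(x,y,\gamma(t_0))$, the angle at $x$, $\theta^k_{\alpha,\gamma}$ and Lemma \ref{RescalAngulo}(a); when $p\ll y$ I use $(\gamma(t_0),y,z)$, the angle at $z$, $\theta^k_{\gamma,\beta}$ and Lemma \ref{RescalAngulo}(c). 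In each instance the causal hypothesis relating $v$ and $p$ is precisely what makes the relevant $\theta^k$ defined at the arguments in question, and Remark \ref{GeodesicPropertiesU} guarantees that every subtriangle still satisfies the size bounds for $k$. I expect the only real difficulty to be organizational rather than conceptual: one must attach to each (side, vertex) pair the correct monotone function, the correct variable, and the correct part of Lemma \ref{RescalAngulo}, and then be careful about the orientation of the Lorentzian hinge comparison in Lemma \ref{HingeLemmaMk}, since a single reversed inequality there would flip the curvature bound from below to above.
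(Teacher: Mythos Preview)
Your proposal is correct and follows essentially the same approach as the paper: both proofs invoke Proposition~\ref{CeviansCriteria}, form the relevant subtriangle cut off by the cevian, identify its comparison angle at a shared vertex as a value of one of the $\theta^k$ functions, use monotonicity together with Lemma~\ref{RescalAngulo} to bound it by the corresponding angle in $\triangle\bar x\bar y\bar z$, and finish with the Hinge Lemma~\ref{HingeLemmaMk}. The only cosmetic difference is the choice of representative case and subtriangle: the paper details $p\in\alpha$ with subtriangle $(x,p,z)$ and the angle at $x$ via $\theta^k_{\alpha,\gamma}$, whereas you detail $p\in\beta$ with subtriangle $(x,y,p)$ and the angle at $y$ via $\theta^k_{\beta,\alpha}$ (and, for the case the paper treats, you would take the \emph{other} half $(p,y,z)$); either half of the split works equally well.
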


\begin{proof}
We rely on Proposition  \ref{CeviansCriteria} in order to prove that $(X,d,\ll,\leq,\tau)$ has timelike curvature bounded below by $k$.  Now suppose $p\in \alpha$, then $p\ll y \ll z$ and $\bar{p}\in\bar{\alpha}$ so that $\tau(x,p)=\bar{\tau}(\bar{x},\bar{p}) = s$. Let $\triangle x_1p_1z_1$ be  a comparison triangle for $(x,p,z)$ in $\mb{M}_{k}^{L}$.
\begin{figure}[ht]
\centering{
\includegraphics[scale=.3]{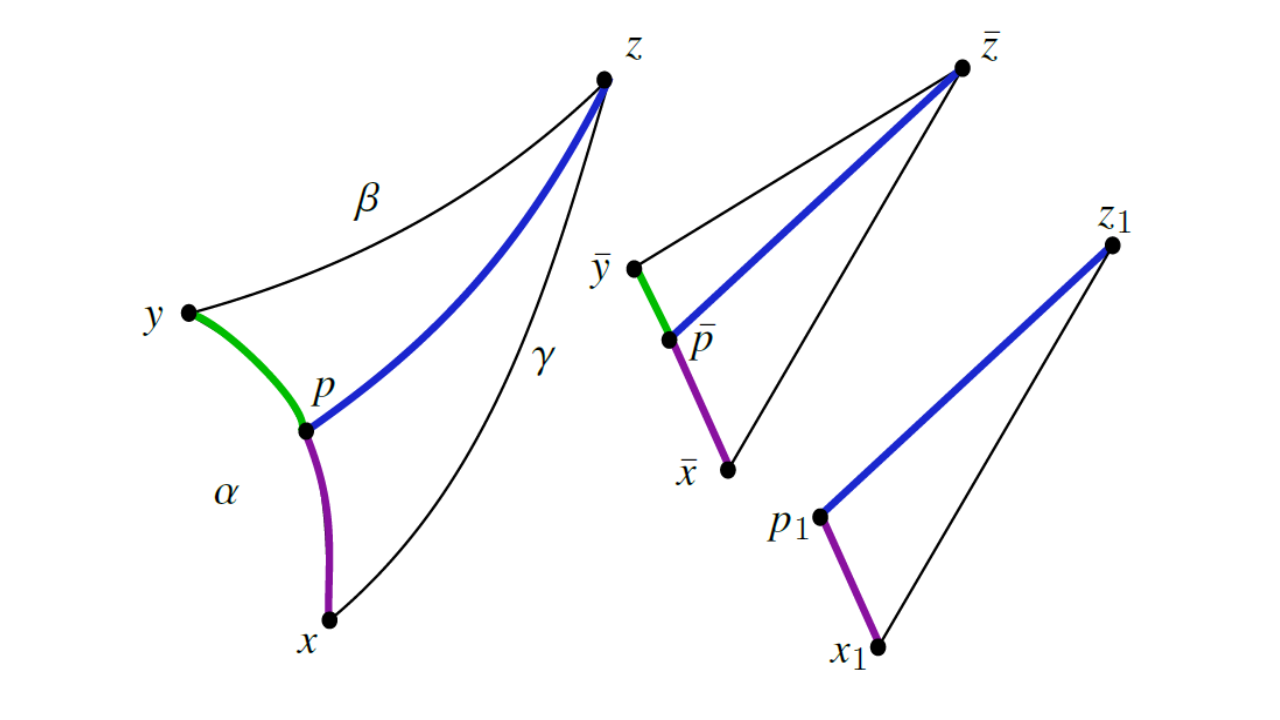}
\caption{Triangle $(x,y,z)$ and comparison triangles $\triangle \bar{x}\bar{y}\bar{z}$ and $\triangle x_1p_1z_1$.}
}
\end{figure}
Note that
\[
\dst\frac{\mangle p_1x_1z_1}{sc} =  \dst\frac{\wangle_{k} pxz }{sc} =\theta^{k}_{\alpha,\gamma}(s,c),
\]
and due to the monotonicity of $\theta^{k}_{\alpha,\gamma}$ we conclude
\[
\dst\frac{\mangle p_1x_1z_1}{sc} = \theta^{k}_{\alpha,\gamma}(s,c)
\leq \theta^{k}_{\alpha,\gamma}(a,c)
= \dst\frac{\wangle_{k} yxz}{ac}
= \dst\frac{\mangle \bar{y}\bar{x}\bar{z}}{ac},
\]
Therefore $\dst\frac{\mangle p_1x_1z_1}{sc} \leq \dst\frac{\mangle \bar{y}\bar{x}\bar{z}}{ac}$. On the other hand, by Lemma \ref{RescalAngulo} it follows that
\[
\mangle \bar{y}\bar{x}\bar{z} = \frac{a}{s}\mangle \bar{p}\bar{x}\bar{z},
    \]
thus
\[
\dst\frac{\mangle p_1x_1z_1}{sc} \leq \dst\frac{\mangle \bar{y}\bar{x}\bar{z}}{ac} = \dst\frac{\mangle \bar{p}\bar{x}\bar{z}}{sc},
\]
then $\mangle p_1x_1z_1\leq \mangle \bar{p}\bar{x}\bar{z}$. Finally, applying Lemma \ref{HingeLemmaMk} in $\mb{M}_{k}^{L}$ we get the desired inequality $\tau(p,z)\leq \bar{\tau}(\bar{p},\bar{z})$. All the remaining cases can be worked out similarly.
\end{proof}


\section{Normalized angles in Lorentzian pre-length spaces}\label{sec:Toponogov}

So far, we have been able to find suitable angle comparison functions in our non-smooth context. Now we are ready to define the main notion of our work, namely, normalized angles for Lorentzian pre-length spaces.

\begin{definition}\label{LorentzianAngle}
Following the same assumptions of Definition \ref{preAngle} we set
\[
\measuredangle yxz = \dst\lim_{s,t\to 0} \theta^{k}_{\alpha,\gamma}(s,t), \ \measuredangle xyz = \dst\lim_{s,t\to 0} \theta^{k}_{\beta,\alpha}(s,t)
\]
\[
\mbox{ and } \ \measuredangle xzy = \dst\lim_{s,t\to 0} \theta^{k}_{\gamma,\beta}(s,t).
\]
\end{definition}

\begin{remark}\label{ExistenceAngle}
Notice that the limits for $\measuredangle yxz$ and $\measuredangle xzy$  described above need not exist, even in the presence of lower curvature bounds, since in this case the limits may diverge to $-\infty$. On the other hand, since $\theta^k_{\alpha ,\beta}(s,t)$ is positive, monotonicity guarantees that $\measuredangle xyz$ always exists.

Furthermore, as we  will prove, all normalized angles exist when the vertices of a geodesic triangle are not conjugate points along their sides. In other words if in a timelike geodesic triangle the sides can be extended past their vertices as maximizing timelike curves then the normalized angles exist (refer to Proposition \ref{SumGreaterCero} below). In the next results we will not be assuming this mild extra  hypothesis, but rather require the existence of the normalized angles.
\end{remark}

As an immediate consequence of the definition of normalized angles we have a property called the \emph{local Lorentzian Toponogov property}.

\begin{theorem}\label{Toponogov}
For any comparison neighborhood $U$ in a Lorentzian pre-length space $(X,d,\ll,\leq,\tau)$ with curvature bounded below by $k$ and any triangle $(x,y,z)$ in $U$ we have
\[
\measuredangle yxz \leq \dst\frac{\wangle_{k}yxz}{ac}, \ \measuredangle xyz \leq \dst\frac{\wangle_{k}xyz}{ab}, \ \measuredangle xzy \leq \dst\frac{\wangle_{k}xzy}{bc}
\]
\end{theorem}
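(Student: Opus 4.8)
The plan is to obtain all three inequalities as immediate consequences of the angle monotonicity of Theorem \ref{MonotonicityCriterion} together with Definition \ref{LorentzianAngle}, by comparing the degenerate limit $(s,t)\to 0$ against the value of the comparison function at the full side lengths.

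I begin with the vertex $x$. Since $x\ll y\ll z$, we have $\alpha(a)=y\ll z=\gamma(c)$, so that $\theta^{k}_{\alpha,\gamma}(a,c)$ is defined, and by the openness of $\ll$ and the continuity of $\tau$ there exist admissible pairs $(s,t)$ with $s,t$ arbitrarily small. Theorem \ref{MonotonicityCriterion} states that $\theta^{k}_{\alpha,\gamma}$ is nondecreasing jointly in its two arguments, so every admissible $(s,t)$ with $s\le a$ and $t\le c$ satisfies $\theta^{k}_{\alpha,\gamma}(s,t)\le\theta^{k}_{\alpha,\gamma}(a,c)$. Passing to the limit $(s,t)\to 0$ --- which by monotonicity coincides with the infimum of the comparison function --- yields $\measuredangle yxz\le\theta^{k}_{\alpha,\gamma}(a,c)$. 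Unwinding Definition \ref{preAngle} gives $\theta^{k}_{\alpha,\gamma}(a,c)=\dst\frac{\wangle_{k}\alpha(a)x\gamma(c)}{ac}=\dst\frac{\wangle_{k}yxz}{ac}$, which is the first claimed inequality.

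The remaining two vertices are handled identically. For $y$, the function $\theta^{k}_{\beta,\alpha}$ is defined on all of $(0,b]\times(0,a]$ without any chronological restriction, so monotonicity gives $\measuredangle xyz\le\theta^{k}_{\beta,\alpha}(b,a)$; evaluating at the endpoints $\beta(b)=z$ and $\alpha^{*}(a)=x$, and using that the non-normalized angle is symmetric in its two sides, $\theta^{k}_{\beta,\alpha}(b,a)=\dst\frac{\wangle_{k}zyx}{ab}=\dst\frac{\wangle_{k}xyz}{ab}$. For $z$, the admissibility condition at the endpoint $(c,b)$ holds because $\gamma^{*}(c)=x\ll y=\beta^{*}(b)$, and the same argument produces $\measuredangle xzy\le\theta^{k}_{\gamma,\beta}(c,b)=\dst\frac{\wangle_{k}xzy}{bc}$.

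I do not expect a genuine obstacle: the entire content is packaged in Theorem \ref{MonotonicityCriterion}, and the Toponogov inequalities are merely its evaluation between the limit $(s,t)\to 0$ and the extreme value at the triangle's actual side lengths. The only points demanding attention are bookkeeping ones: verifying that the limit can be taken monotonically, so the bound persists even when it diverges to $-\infty$ (as flagged in Remark \ref{ExistenceAngle}); that admissible pairs exist arbitrarily close to the origin; and that the chronological admissibility hypotheses of Theorem \ref{MonotonicityCriterion} hold at both endpoints of each comparison.
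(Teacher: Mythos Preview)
Your proposal is correct and matches the paper's approach: the paper states this result as an immediate consequence of the definition of normalized angles (Definition~\ref{LorentzianAngle}) together with the monotonicity established in Theorem~\ref{MonotonicityCriterion}, and gives no further argument. Your write-up simply makes explicit the bookkeeping (admissibility at the endpoints $(a,c)$, $(b,a)$, $(c,b)$ and the passage to the limit) that the paper leaves implicit.
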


Once we have a notion of normalized angle in a Lorentzian length space, we show an adapted version of the hinge theorem.

\begin{theorem}\label{HingeTheorem}
Let $U$ be a comparison neighborhood of a Lorentzian pre-length space $(X,d,\ll,\leq,\tau)$ with curvature bounded below by $k$. Let us consider a timelike geodesic triangle $(x,y,z)$ in $U$, realized by maximal causal curves $\alpha$, $\beta$, $\gamma$ whose lengths satisfy timelike size bounds for $k$ and such that $\measuredangle yxz$ is finite and $\measuredangle \bar{y}\bar{x}\bar{z} = ac \measuredangle yxz$. Then, for any $s,t\in (0,a]\times (0,c]$ satisfaying $\alpha(s)\ll \gamma(t)$ or $\gamma(t)\ll \alpha(s)$, we have
\[
\tau(\alpha(s),\gamma(t)) \geq \bar{\tau}(\bar{\alpha}(s),\bar{\gamma}(t)).
\]
\end{theorem}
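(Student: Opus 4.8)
The plan is to show that the hypothesis forces the angle comparison function $\theta^k_{\alpha,\gamma}$ to be \emph{constant} on its domain, and then to reinterpret this constancy as an equality of included angles between two auxiliary model-space triangles, from which the asserted side comparison follows by the Hinge Lemma.

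First I would note that the pair $(a,c)$ lies in the domain of $\theta^k_{\alpha,\gamma}$: since $\alpha(a)=y\ll z=\gamma(c)$, the defining chronological condition is satisfied and
\[
\theta^k_{\alpha,\gamma}(a,c)=\frac{\wangle_k yxz}{ac}=\frac{\measuredangle\bar y\bar x\bar z}{ac}.
\]
By Definition \ref{LorentzianAngle} we have $\measuredangle yxz=\lim_{s,t\to0}\theta^k_{\alpha,\gamma}(s,t)$, and by the monotonicity established in Theorem \ref{MonotonicityCriterion} this limit is the infimum of $\theta^k_{\alpha,\gamma}$, while $\theta^k_{\alpha,\gamma}(a,c)$ is its supremum (attained at the largest admissible point). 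The hypothesis $\measuredangle\bar y\bar x\bar z=ac\,\measuredangle yxz$ says exactly that these two extreme values coincide; since $\measuredangle yxz$ is finite, the squeeze is legitimate and monotonicity forces
\[
\theta^k_{\alpha,\gamma}(s,t)=\measuredangle yxz=\frac{\measuredangle\bar y\bar x\bar z}{ac}
\]
for every admissible $(s,t)$.

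Next I would fix $(s,t)$ with $\alpha(s)\ll\gamma(t)$, the case $\gamma(t)\ll\alpha(s)$ being entirely symmetric, and choose a comparison triangle $\triangle x_1p_1q_1$ in $\mathbb{M}^L_k$ for $(x,\alpha(s),\gamma(t))$, so that $\wangle_k\alpha(s)x\gamma(t)=\measuredangle p_1x_1q_1$ and $\bar\tau(p_1,q_1)=\tau(\alpha(s),\gamma(t))$. The constancy above gives $\measuredangle p_1x_1q_1=\tfrac{st}{ac}\measuredangle\bar y\bar x\bar z$. Applying Lemma \ref{RescalAngulo} twice inside the comparison triangle $\triangle\bar x\bar y\bar z$ yields $\measuredangle\bar\alpha(s)\bar x\bar\gamma(t)=\tfrac{st}{ac}\measuredangle\bar y\bar x\bar z$. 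Hence the two model triangles $\triangle x_1p_1q_1$ and $\triangle\bar x\,\bar\alpha(s)\,\bar\gamma(t)$ share the same pair of side lengths $s=\tau(x,\alpha(s))$ and $t=\tau(x,\gamma(t))$ issuing from $x_1$ and $\bar x$, together with the same included angle. The Hinge Lemma (Lemma \ref{HingeLemmaMk}), whose "iff" sharpens to an equality of third sides under equal included angles, then forces
\[
\tau(\alpha(s),\gamma(t))=\bar\tau(p_1,q_1)=\bar\tau(\bar\alpha(s),\bar\gamma(t)),
\]
which in particular gives the stated inequality $\tau(\alpha(s),\gamma(t))\ge\bar\tau(\bar\alpha(s),\bar\gamma(t))$ (and in fact reveals it to be an equality, the matching lower bound coming from the curvature hypothesis itself).

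The main obstacle I anticipate is the opening step: recognizing that the hypothesis is precisely the coincidence of the limiting (infimum) value $\measuredangle yxz$ with the top (supremum) value $\theta^k_{\alpha,\gamma}(a,c)$, and that the finiteness assumption is exactly what legitimizes squeezing the monotone function between these equal bounds. Once constancy is secured, the rest is careful bookkeeping: verifying that $(a,c)$ genuinely sits in the domain so Theorem \ref{MonotonicityCriterion} applies with $(a,c)$ as the upper endpoint, transporting the equal-angle information to the auxiliary triangles via Lemma \ref{RescalAngulo}, and treating both chronological orientations of $\alpha(s)$ and $\gamma(t)$ symmetrically before invoking Lemma \ref{HingeLemmaMk}.
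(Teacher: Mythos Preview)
Your proof is correct and follows essentially the same route as the paper: rescale the comparison angle via Lemma~\ref{RescalAngulo}, compare it with $\wangle_k\alpha(s)x\gamma(t)$ through the monotonicity underlying Definition~\ref{LorentzianAngle}, and conclude with the Hinge Lemma~\ref{HingeLemmaMk}. The only difference is that the paper uses just the one-sided bound $\measuredangle yxz\le\theta^k_{\alpha,\gamma}(s,t)$ to obtain $\measuredangle\bar\alpha(s)\bar x\bar\gamma(t)\le\measuredangle y_1x_1z_1$ directly, whereas you first squeeze $\theta^k_{\alpha,\gamma}$ between its infimum $\measuredangle yxz$ and its maximum $\theta^k_{\alpha,\gamma}(a,c)$ to force constancy---a slight detour that is not needed for the stated inequality but does reward you with the sharper conclusion that equality actually holds.
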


\begin{proof}
Let us take $(s,t)\in (0,a]\times (0,c]$ such that $\alpha(s)\ll \gamma(t)$. If $\bar{\tau}(\bar{\alpha}(s),\bar{\gamma}(t))=0$ then we are done. Otherwise, suppose $\bar{\tau}(\bar{\alpha}(s),\bar{\gamma}(t))>0$. Let $triangle x_1y_1z_1$ be a comparison triangle for triangle $(x,\alpha(s),\gamma(t))$. Applying Lemma \ref{RescalAngulo} we get
\[
\measuredangle \bar{\alpha}(s)\bar{x}\bar{\gamma}(t) = \frac{st}{ac} \measuredangle \bar{y}\bar{x}\bar{z} = \frac{st}{ac} \left( ac \measuredangle yxz \right) = st \measuredangle yxz
\]
Because of the definition of the normalized angle we have
\[
\measuredangle \bar{\alpha}(s)\bar{x}\bar{\gamma}(t) = st \measuredangle yxz 
\leq st \theta^{k}_{\alpha,\gamma}(s,t) 
= st \left(\dst\frac{\wangle_{k} \alpha(s)x\gamma(t)}{st} \right) 
= \wangle_{k} \alpha(s)x\gamma(t) = \measuredangle y_1x_1z_1,
\]
therefore $\measuredangle \bar{\alpha}(s)\bar{x}\bar{\gamma}(t) \leq  \measuredangle y_1x_1z_1$. By Lemma \ref{HingeLemmaMk} we conclude $\bar{\tau}(y_1,z_1)\geq \bar{\tau}(\bar{\alpha}(s),\bar{\gamma}(t))$. But $\tau(\alpha(s),\gamma(t)) = \bar{\tau}(y_1,z_1)$, thus
\[
\tau(\alpha(s),\gamma(t)) \geq \bar{\tau}(\bar{\alpha}(s),\bar{\gamma}(t)).
\]
\end{proof}


For metric length spaces of bounded curvature the following fact about the sum of adjacent angles is well known: for any geodesic segment $\gamma$ from $x$ to $y$, a point $p\in\gamma$ and a point $q\not\in\gamma$  we have
\begin{equation}\label{SumAnglesPi}
\measuredangle xpq + \measuredangle qpy = \pi
\end{equation}
In the case of Lorentzian model spaces of constant curvature we have the next lemma. A detailed proof can be found in \cite{Kirch}.

\begin{lemma}\label{SumaAngleCero}
Let us consider a triangle $\triangle pqr$ satisfying size bounds for $k$ in $\mathbb{M}_{k}^{L}$. Let $m$ be a point on the side $\beta$ joining $p$ with $r$ 
and suppose this geodesic $\beta$ is parametrized by $[0,1]$, so there exists $\lambda\in[0,1]$ such that $m=\beta(\lambda)$. Then
\[
(1-\lambda)\measuredangle qmp + \lambda \measuredangle rmq =0.
\]
\end{lemma}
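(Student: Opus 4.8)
~\textbf{Plan.} The statement is a purely model-space identity about non-normalized angles, so the plan is to reduce it to the rescaling relation already recorded in Lemma \ref{RescalAngulo} together with the collinearity of the point $m$ on the geodesic $\beta$. The key observation is that $m=\beta(\lambda)$ lies on the segment from $p$ to $r$, so the two angles $\measuredangle qmp$ and $\measuredangle rmq$ are the non-normalized angles at the \emph{same} vertex $m$ formed by the geodesic $\delta$ from $m$ to $q$ against the two opposite rays of a single geodesic, namely the portion of $\beta$ running from $m$ back to $p$ and the portion running from $m$ forward to $r$. Writing $\delta:[0,1]\to\mathbb{M}_k^L$ for the geodesic from $m$ to $q$ and recalling that $\measuredangle$ is defined as the inner product $\langle\,\cdot\,,\,\cdot\,\rangle$ of initial velocity vectors, the whole claim becomes a statement about two velocity vectors that are negatives of one another up to the length factors $\lambda$ and $1-\lambda$.

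\textbf{Key steps.} First I would parametrize $\beta:[0,1]\to\mathbb{M}_k^L$ as the geodesic from $p$ to $r$, so that $m=\beta(\lambda)$, and let $\beta^+$ denote the geodesic from $m$ to $r$ and $\beta^-$ the geodesic from $m$ to $p$, each parametrized affinely on $[0,1]$. Since $\beta$ is a single geodesic, its velocity at $m$ is a fixed vector $w$, and by the affine reparametrizations we have $(\beta^+)'(0)=(1-\lambda)\,w$ while $(\beta^-)'(0)=-\lambda\,w$; this is exactly the content of the rescaling in Lemma \ref{RescalAngulo}, now applied at the interior vertex $m$ to the two sub-geodesics pointing in opposite directions along $\beta$. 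Second, writing $v=\delta'(0)$ for the initial velocity of the geodesic from $m$ to $q$, the definition of the non-normalized angle gives
\[
\measuredangle rmq=\langle (\beta^+)'(0),v\rangle=(1-\lambda)\langle w,v\rangle,\qquad
\measuredangle qmp=\langle v,(\beta^-)'(0)\rangle=-\lambda\langle w,v\rangle.
\]
Third, substituting these into the left-hand side yields
\[
(1-\lambda)\measuredangle qmp+\lambda\measuredangle rmq
=(1-\lambda)\bigl(-\lambda\langle w,v\rangle\bigr)+\lambda\bigl((1-\lambda)\langle w,v\rangle\bigr)=0,
\]
which is the desired identity.

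\textbf{Main obstacle.} The only genuine subtlety is bookkeeping of signs and parametrizations: one must be careful that the two angles at $m$ are measured against the two \emph{opposite} halves of the single geodesic $\beta$, so that the two initial velocities are positive multiples of $+w$ and $-w$ respectively, producing the cancellation. The size-bounds hypothesis for $k$ is needed only to guarantee that the geodesic $\beta$ and the comparison configuration are well defined (no conjugate points interfering with uniqueness of the segments), so that $w$ and $v$ are unambiguous; once that is secured, the computation is the linear-algebra identity above and requires no case analysis on the sign of $k$. I would therefore structure the write-up so that the geometric reduction to ``two angles against $\pm w$'' is stated cleanly, after which the algebra is immediate.
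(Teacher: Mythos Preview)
Your argument is correct: once one unpacks the definition of the non-normalized angle as $\langle\alpha'(0),\gamma'(0)\rangle$ with the geodesics parametrized on $[0,1]$, the identity reduces exactly to the linear-algebra cancellation you wrote, and the rescaling factors $(1-\lambda)$ and $-\lambda$ on the two sub-geodesic velocities are computed correctly. The paper does not actually supply its own proof of this lemma---it simply refers the reader to \cite{Kirch}---so there is nothing further to compare; your direct computation is precisely the natural argument and would serve as a self-contained replacement for that citation.
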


One of the most important issues that need to be addressed in comparison geometry consists on relating the measures of the angles of the  comparison triangles coming from the subdivision of a given triangle. The following results deals with this situation.

\begin{lemma}[Straightening lemma for shoulder angles]\label{SLemma}
Let $\triangle qpr$, $\triangle q_1p_1m_1$ and $\triangle q_2m_2r_2$ be three timelike geodesic triangles satisfying curvature bounds for $k$ in $\mathbb{M}_{k}^{L}$, and $m$ a point on the side joining $p$ to $r$. Suppose $\overline{\tau}(q,p)=\overline{\tau}(q_1,p_1)$, $\overline{\tau}(q,r)= \overline{\tau}(q_2,r_2)$, $\overline{\tau}(p,m)= \overline{\tau}(p_1,m_1)$, $\overline{\tau}(m,r)= \overline{\tau}(m_2,r_2)$ and $\overline{\tau}(q_1,m_1)= \overline{\tau}(q_2,m_2)$. If
\[
\left(1 - \frac{\overline{\tau}(p,m)}{\overline{\tau}(p,r)}\right)\wangle q_1m_1p_1 + \frac{\overline{\tau}(p,m)}{\overline{\tau}(p,r)} \wangle q_2m_2r_2 \geq 0,
\]
then 
\[
\wangle qpm \geq \wangle q_1p_1m_1 \ and \ \wangle qrm \geq \wangle q_2r_2m_2.
\]
\end{lemma}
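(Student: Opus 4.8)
The plan is to compare the two sub-triangles $\triangle q_1p_1m_1$ and $\triangle q_2m_2r_2$ against the full triangle $\triangle qpr$ by exploiting the fact that $m$ divides the side $pr$ and that the diagonal lengths agree. First I would set $\lambda = \overline{\tau}(p,m)/\overline{\tau}(p,r)$ so that $m = \beta(\lambda)$ when $\beta$ is the affinely parametrized geodesic from $p$ to $r$, and rewrite the hypothesis as
\[
(1-\lambda)\,\wangle q_1m_1p_1 + \lambda\,\wangle q_2m_2r_2 \geq 0.
\]
The natural object to introduce is the comparison angle at $m$ inside the \emph{original} triangle $\triangle qpr$: let $\wangle qmp$ and $\wangle qmr$ denote the non-normalized angles formed at $m$ by the diagonal $mq$ and the two halves of the side $pr$. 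By Lemma \ref{SumaAngleCero} applied to $\triangle qpr$ with the point $m=\beta(\lambda)$, these satisfy the straightening relation
\[
(1-\lambda)\,\wangle qmp + \lambda\,\wangle qmr = 0.
\]

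Next I would combine these two displays. Subtracting the Lemma \ref{SumaAngleCero} identity (which equals $0$) from the hypothesis inequality gives
\[
(1-\lambda)\bigl(\wangle q_1m_1p_1 - \wangle qmp\bigr) + \lambda\bigl(\wangle q_2m_2r_2 - \wangle qmr\bigr)\geq 0.
\]
The key geometric input is that the three triangles share the diagonal length $\overline{\tau}(q_1,m_1)=\overline{\tau}(q_2,m_2)$, but this need not equal $\overline{\tau}(q,m)$ in the original triangle; that is precisely the quantity the lemma controls. I would argue by contradiction: assume the conclusion fails, say $\wangle qpm < \wangle q_1p_1m_1$. Since $\triangle q_1p_1m_1$ and the $p$-half of $\triangle qpr$ agree on the two sides $\overline{\tau}(q,p)=\overline{\tau}(q_1,p_1)$ and $\overline{\tau}(p,m)=\overline{\tau}(p_1,m_1)$, the Hinge Lemma \ref{HingeLemmaMk} (for included angles, at vertex $p$) converts this strict angle inequality into a strict length inequality on the opposite sides, namely $\overline{\tau}(q,m) < \overline{\tau}(q_1,m_1)$. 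Then, again by the Hinge Lemma \ref{HingeLemmaMk}, now reading off the angle at $m$ in each triangle from the shared side lengths $\overline{\tau}(p,m)$ and the diagonal, the strict inequality $\overline{\tau}(q,m)<\overline{\tau}(q_1,m_1)$ forces a strict comparison $\wangle qmp < \wangle q_1m_1p_1$ for the shoulder angles at $m$.

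Running the symmetric argument on the $r$-side would show that \emph{if} $\wangle qrm < \wangle q_2r_2m_2$ then likewise $\wangle qmr < \wangle q_2m_2r_2$. The decisive step is to rule out that both conclusions fail simultaneously: if both $\wangle qpm < \wangle q_1p_1m_1$ and $\wangle qrm < \wangle q_2r_2m_2$ held, the two chains above would yield $\wangle qmp < \wangle q_1m_1p_1$ and $\wangle qmr < \wangle q_2m_2r_2$, whence
\[
(1-\lambda)\wangle q_1m_1p_1 + \lambda\wangle q_2m_2r_2 > (1-\lambda)\wangle qmp + \lambda\wangle qmr = 0,
\]
which is consistent with the hypothesis and so gives no contradiction by itself; the real leverage must come from the fact that the \emph{single} diagonal length $\overline{\tau}(q,m)$ cannot simultaneously be smaller than $\overline{\tau}(q_1,m_1)$ and yield both strict shoulder inequalities unless the weighted combination is forced strictly negative. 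I expect the main obstacle to be exactly this bookkeeping: carefully tracking which comparisons are \emph{strict} and converting the single scalar $\overline{\tau}(q,m)$ (equal on both sub-triangles) into a contradiction with the hypothesized $\geq 0$ via Lemma \ref{SumaAngleCero}. Concretely, assuming the conclusion fails on (say) the $p$-side produces $\overline{\tau}(q,m)<\overline{\tau}(q_1,m_1)=\overline{\tau}(q_2,m_2)$, and feeding this common strict inequality back through Lemma \ref{HingeLemmaMk} on both halves drives $(1-\lambda)\wangle q_1m_1p_1+\lambda\wangle q_2m_2r_2$ strictly below its value at the true diagonal, i.e.\ strictly below $0$, contradicting the hypothesis. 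The technical care needed to make the strictness propagate correctly through the two applications of the Hinge Lemma is where the proof will demand the most attention.
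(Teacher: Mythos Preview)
The paper does not actually prove this lemma; it is stated without proof and attributed to Alexander--Bishop (their Lemma~2.4). So there is no proof in the paper to compare against directly, but your strategy---compare the true diagonal $\overline{\tau}(q,m)$ with the common hypothesized diagonal $d:=\overline{\tau}(q_1,m_1)=\overline{\tau}(q_2,m_2)$ and feed the discrepancy through Lemma~\ref{SumaAngleCero}---is the right one, and is essentially the mirror image of the paper's proof of the converse Lemma~\ref{ConverseSL}.

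The execution, however, contains a sign error that prevents the argument from closing. From $\overline{\tau}(q,m)<d$ you conclude $\wangle qmp < \wangle q_1m_1p_1$; the correct direction is the opposite. With $\overline{\tau}(q,p)$ and $\overline{\tau}(p,m)$ fixed, the Law of Cosines (Lemma~\ref{MinkowskiLawCosines} for $k=0$, and its analogues for $k\neq 0$) gives $\measuredangle qmp=\tfrac12\bigl(\overline{\tau}(q,p)^2-\overline{\tau}(q,m)^2-\overline{\tau}(p,m)^2\bigr)$, which is \emph{decreasing} in the diagonal; hence $\overline{\tau}(q,m)<d$ forces $\wangle qmp>\wangle q_1m_1p_1$, and the same monotonicity on the $r$-half gives $\wangle qmr>\wangle q_2m_2r_2$. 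Together with Lemma~\ref{SumaAngleCero} this yields
\[
(1-\lambda)\,\wangle q_1m_1p_1+\lambda\,\wangle q_2m_2r_2\;<\;(1-\lambda)\,\wangle qmp+\lambda\,\wangle qmr\;=\;0,
\]
the desired contradiction. You assert exactly this in your final paragraph, but only \emph{after} deriving the reversed inequality and observing that it ``gives no contradiction by itself,'' so the written argument never actually reaches the contradiction. A secondary point: the monotonicity you need at $m$ is not one of the bullets of Lemma~\ref{HingeLemmaMk} (which fixes the two sides \emph{adjacent} to the angle), nor does Lemma~\ref{Hinge2} suffice as stated (it gives only a disjunction); invoke the Law of Cosines directly. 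Once $\overline{\tau}(q,m)\geq d$ is established by this contradiction, both conclusions follow immediately from Lemma~\ref{HingeLemmaMk} applied at $p$ and at $r$.
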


\begin{remark}
\begin{enumerate}
    \item The statement obtained by reversing all inequalities in the above Lemma holds true as well.
\item We have analogous results when the point $m$ is located in any of the remaining sides of the triangle $m$.
\end{enumerate}
\end{remark}

We end our analysis on non-normalized angles by proving a converse of the straightening lemma that will prove useful in establishing the existence of angles in 

\begin{lemma}\label{ConverseSL}
Suppose $\triangle {q}{p}{r}$ is a triangle satisfying size bounds for $K$ in a model space of curvature $K$. Let ${m}$ be a point on the side joining ${p}$ to ${r}$, and set $\lambda=\lambda_{{m}} \in[0,1]$ the affine parameter corresponding to point $m$ when the side is parametrized by $[0,1]$. Let $\triangle q_1p_1m_1$ and $\triangle q_2m_2r_2$ be triangles in respective model spaces of curvature $K$, where $\bar\tau (q_1,m_1)=\tau\vert (q_2,m_2)$, $ \overline{\tau}(q_1,p_1)=\overline{\tau}\tilde{q},\tilde{p})$, $\overline{\tau}( q_2,r_2)=\overline{\tau}({q},{r})$, $\overline{\tau}( p_1,m_1)=\overline{\tau}({p},{m})$ and $\overline{\tau}( m_2,r_2)=\overline{\tau}({m},{r})$. If
\[
\measuredangle {q}{p}{m}\geq \measuredangle q_1p_1m_1 \ \mbox{and} \ \measuredangle {q}{r}{m}\geq \measuredangle q_2r_2m_2
\]
then 
\[
(1-\lambda)\measuredangle p_1m_1q_1 + \lambda\measuredangle r_2m_2q_2\geq 0.
\]
\end{lemma}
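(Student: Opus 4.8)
The plan is to argue by contradiction, exploiting the fact that this statement is precisely the converse of Lemma \ref{SLemma} and that the reversed-inequality version of that lemma is available (first item of the remark following it). First I would record that the affine parameter satisfies $\lambda = \overline{\tau}(p,m)/\overline{\tau}(p,r)$, so that the five length-matching hypotheses imposed here are term for term those of Lemma \ref{SLemma}; hence both that lemma and its reversed form apply to the triple $\triangle qpr$, $\triangle q_1p_1m_1$, $\triangle q_2m_2r_2$. I would also use throughout that the non-normalized angle is symmetric in its outer entries, so that $\measuredangle p_1m_1q_1 = \measuredangle q_1m_1p_1$, $\measuredangle r_2m_2q_2=\measuredangle q_2m_2r_2$, and likewise $\measuredangle rmq=\measuredangle qmr$.

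Assume, towards a contradiction, that the conclusion fails, i.e. $(1-\lambda)\measuredangle p_1m_1q_1 + \lambda\measuredangle r_2m_2q_2 < 0$; by the symmetry just noted this reads
\[
(1-\lambda)\measuredangle q_1m_1p_1 + \lambda\measuredangle q_2m_2r_2 < 0.
\]
In particular this quantity is $\le 0$, so the reversed version of Lemma \ref{SLemma} yields $\measuredangle qpm \le \measuredangle q_1p_1m_1$ and $\measuredangle qrm \le \measuredangle q_2r_2m_2$. Combined with the standing hypotheses $\measuredangle qpm \ge \measuredangle q_1p_1m_1$ and $\measuredangle qrm \ge \measuredangle q_2r_2m_2$, this forces the equalities $\measuredangle qpm = \measuredangle q_1p_1m_1$ and $\measuredangle qrm = \measuredangle q_2r_2m_2$.

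Next I would invoke the rigidity contained in the Hinge Lemma (Lemma \ref{HingeLemmaMk}), whose statement is an equivalence. Since $\overline{\tau}(q,p)=\overline{\tau}(q_1,p_1)$ and $\overline{\tau}(p,m)=\overline{\tau}(p_1,m_1)$, and the included angles at $p$ and $p_1$ are now equal, the equality direction of that equivalence gives $\overline{\tau}(q,m)=\overline{\tau}(q_1,m_1)$. Thus all three side lengths of $\triangle qpm$ and $\triangle q_1p_1m_1$ coincide and the two triangles are isometric, whence $\measuredangle qmp=\measuredangle q_1m_1p_1$. The identical argument applied to $\triangle qrm$ and $\triangle q_2m_2r_2$ gives $\measuredangle qmr=\measuredangle q_2m_2r_2$. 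Finally I would apply Lemma \ref{SumaAngleCero} to the point $m$ on the side $pr$ of $\triangle qpr$, which reads $(1-\lambda)\measuredangle qmp + \lambda\measuredangle qmr = 0$; substituting the two equalities just obtained gives $(1-\lambda)\measuredangle q_1m_1p_1 + \lambda\measuredangle q_2m_2r_2 = 0$, contradicting the strict inequality assumed above. Hence the displayed sum must be $\ge 0$, which is the assertion.

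The step I expect to be the main obstacle is the passage from the two angle equalities to the congruence of the sub-triangles: one must be careful to use the equality (rigidity) direction of Lemma \ref{HingeLemmaMk} rather than merely its inequality form, and to verify that the two sides flanking each compared angle genuinely match in length, which is guaranteed precisely by the matching hypotheses. A secondary point worth checking is that the reversed form of Lemma \ref{SLemma} applies verbatim here, i.e. that the hypotheses of that lemma hold with $\lambda=\overline{\tau}(p,m)/\overline{\tau}(p,r)$; this is immediate from the five length identities in the statement.
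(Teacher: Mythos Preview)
Your argument is correct, but it takes a detour compared with the paper's proof. The paper proceeds directly: from $\measuredangle qpm\ge\measuredangle q_1p_1m_1$ and the two matching side lengths, Lemma~\ref{HingeLemmaMk} gives $\bar\tau(q,m)\le\bar\tau(q_1,m_1)$, and then Lemma~\ref{Hinge2} (the shoulder-angle Hinge Lemma) yields $\measuredangle pmq\le\measuredangle p_1m_1q_1$; the analogous chain gives $\measuredangle qmr\le\measuredangle q_2m_2r_2$. Plugging these two inequalities into Lemma~\ref{SumaAngleCero} applied to $\triangle qpr$ at $m$ (which reads $(1-\lambda)\measuredangle pmq+\lambda\measuredangle qmr=0$) immediately produces the desired lower bound. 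Your route instead assumes the conclusion fails, invokes the reversed form of Lemma~\ref{SLemma} to force equality of the angles at $p$ and $r$, and then uses the rigidity (equality) case of Lemma~\ref{HingeLemmaMk} plus Lemma~\ref{SumaAngleCero} to derive a contradiction. Both approaches rest on the same three ingredients (hinge comparison, shoulder-angle behavior, and the adjacent-angle identity at $m$); the paper's direct argument is shorter and avoids both the appeal to Lemma~\ref{SLemma} and the equality case of the hinge lemma, while your contradiction approach has the virtue of making the logical converse relationship with Lemma~\ref{SLemma} explicit.
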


\begin{proof}
Because $\measuredangle {q}{p}{m}\geq \measuredangle q_1p_1m_1$, then applying  Lemma \ref{HingeLemmaMk} we have $\bar\tau (q,m)\leq \bar\tau ( q_1,m_1)$. But by Lemma \ref{Hinge2}, the shoulder angles in triangles $\triangle {q}{p}{m}$ and $\triangle q_1p_1m_1$ satisfy $\measuredangle {p}{m}{q} \leq \measuredangle p_1m_1q_1$. Similarly, $\measuredangle {q}{m}{r} \leq \measuredangle q_2m_2r_2$. In conclusion
\[
(1-\lambda)\measuredangle p_1m_1q_1 + \lambda\measuredangle r_2m_2q_2 \geq (1-\lambda)\measuredangle \tilde{p}\tilde{m}\tilde{q} + \lambda \measuredangle \tilde{q}\tilde{m}\tilde{r} =0.
\]
\end{proof}


We now establish the same inequality as above for Lorentzian pre-length spaces with lower curvature bounds.

\begin{proposition}\label{SumGreaterCero}
Let $(X,d,\ll, \leq,\tau)$ be a Lorentzian pre-length space with timelike curvature bounded below by $k$. For a timelike geodesic triangle $(x,y,z)$ realized by maximal causal curves $\alpha$, $\beta$, $\gamma$ whose side lengths satisfy timelike size bounds for $k$, fix a point $m$ in $\beta$ and a maximal timelike curve $\sigma$ connecting $m$ with $x$. Then
\[
\measuredangle ymx + \measuredangle xmz \geq 0.
\]
Similar inequalities hold if $m$ is on $\alpha$ or $\gamma$.
\end{proposition}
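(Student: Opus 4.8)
The plan is to cut $(x,y,z)$ along the cevian $\sigma$ into the two subtriangles $(x,y,m)$ and $(x,m,z)$, so that $\measuredangle ymx$ is the top-vertex angle of $(x,y,m)$ and $\measuredangle xmz$ the middle-vertex angle of $(x,m,z)$. Writing $\sigma:[0,\ell]\to X$ for the maximal curve from $x$ to $m$ (with $\ell=\tau(x,m)$, where $x\ll y\ll m$ forces $x\ll m$) and $m=\beta(\mu)$, Definition \ref{LorentzianAngle} expresses both angles as limits of comparison functions, namely $\measuredangle ymx=\lim_{s,v\to0}\tfrac{1}{sv}\wangle_k\sigma(\ell-s)\,m\,\beta(\mu-v)$ and $\measuredangle xmz=\lim_{s,w\to0}\tfrac{1}{sw}\wangle_k\beta(\mu+w)\,m\,\sigma(\ell-s)$. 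Here $\measuredangle xmz\ge0$ exists by Remark \ref{ExistenceAngle}, while $\measuredangle ymx\le0$ is precisely the angle whose finiteness we must secure along the way.

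The key device is to feed \emph{shrunken} versions of the two subtriangles into the Converse Straightening Lemma (Lemma \ref{ConverseSL}). Fix a small $s>0$. Since $\sigma(\ell-s)\ll m$ and $\ll$ is open, we have $\sigma(\ell-s)\ll\beta(\mu-v)$ for all sufficiently small $v>0$; hence for small $v,w>0$ the chain $\sigma(\ell-s)\ll\beta(\mu-v)\ll\beta(\mu+w)$ holds and $(\sigma(\ell-s),\beta(\mu-v),\beta(\mu+w))$ is a genuine timelike triangle with $m=\beta(\mu)$ on its base. Its two subtriangles $(\sigma(\ell-s),\beta(\mu-v),m)$ and $(\sigma(\ell-s),m,\beta(\mu+w))$ are small triangles at $m$, so their comparison angles at $m$ are \emph{exactly} the fresh angles $\wangle_k\sigma(\ell-s)m\beta(\mu-v)$ and $\wangle_k\beta(\mu+w)m\sigma(\ell-s)$ above. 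This matching is the whole point of shrinking, and is the step I expect to be the main obstacle: applying the straightening lemma to the full subtriangles instead would only produce, through Toponogov (Theorem \ref{Toponogov}), inequalities pointing the wrong way, whereas the fresh comparison angles defining the normalized angles are recovered only in the limit.

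To apply Lemma \ref{ConverseSL} I must verify its hypotheses, i.e.\ that each base-vertex angle of the comparison triangle of $(\sigma(\ell-s),\beta(\mu-v),\beta(\mu+w))$ dominates the corresponding base-vertex angle of the subtriangle's comparison triangle. This follows from the lower curvature bound: as $m$ lies on the side opposite $\sigma(\ell-s)$, Definition \ref{defi:curvbounds} gives $\tau(\sigma(\ell-s),m)=s\le\bar\tau(\overline{\sigma(\ell-s)},\bar m)$, so the comparison cevian is at least as long as the subtriangle side of length $s$; since the two configurations share their other two sides, the Hinge Lemma \ref{HingeLemmaMk} yields the required domination (size bounds for these shrunken triangles hold for small parameters by Remark \ref{GeodesicPropertiesU}). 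With affine parameter $\lambda=v/(v+w)$, the conclusion reads $\tfrac{w}{v+w}\wangle_k\sigma(\ell-s)m\beta(\mu-v)+\tfrac{v}{v+w}\wangle_k\beta(\mu+w)m\sigma(\ell-s)\ge0$; multiplying by the positive factor $(v+w)/(svw)$ cancels the weights against the normalizations and gives exactly
\[
\frac{\wangle_k\sigma(\ell-s)\,m\,\beta(\mu-v)}{sv}+\frac{\wangle_k\beta(\mu+w)\,m\,\sigma(\ell-s)}{sw}\ge0 .
\]

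It remains to pass to the limit. For fixed $s$, letting $v,w\to0$ and using monotonicity (Theorem \ref{MonotonicityCriterion}) so that each term decreases to its infimum, the displayed inequality yields $\phi_1(s)+\phi_2(s)\ge0$, where $\phi_1(s)$ and $\phi_2(s)$ denote the respective inner limits. Letting $s\to0$, monotonicity identifies these iterated limits with the double limits, $\phi_1(s)\to\measuredangle ymx$ and $\phi_2(s)\to\measuredangle xmz$, and a limit of non-negative quantities is non-negative, so $\measuredangle ymx+\measuredangle xmz\ge0$. Since $\measuredangle xmz$ is finite, this simultaneously forces $\measuredangle ymx>-\infty$, which is the existence asserted in Remark \ref{ExistenceAngle}. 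The cases $m\in\alpha$ and $m\in\gamma$ follow from the analogous cut, with the adjacent pair of angles relabeled accordingly.
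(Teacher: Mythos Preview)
Your argument is correct and is essentially identical to the paper's: both proofs shrink the configuration around $m$ to a small timelike triangle $(\sigma(\ell-s),\beta(\mu-v),\beta(\mu+w))$ inside a comparison neighborhood of $m$, use the lower curvature bound on the cevian $\tau(\sigma(\ell-s),m)$ together with the Hinge Lemma to verify the hypotheses of Lemma~\ref{ConverseSL}, and then cancel the weights to obtain $\theta^{k}_{\sigma,\beta_{my}}(h,s)+\theta^{k}_{\beta_{mz},\sigma}(t,h)\ge 0$ before passing to the limit. Your handling of the limit via monotonicity is a touch more explicit than the paper's, but the route is the same.
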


\begin{proof}
 Let $U$ be a comparison neighborhood around $m$. Let the points $q$, $p$, $r\in U$ lie on sides $\beta_{my}$, $\beta_{mz}$ and $\sigma$, where the latter is a maximal geodesic connecting $m$ and $x$, such that $p\ll q$. Thus, $(p,q,r)$ is a timelike geodesic triangle satisfying curvature bounds for $k$. Furthermore, set $\tau(q,m)=s$, $\tau(m,r)=t$ and $\tau(p,m)=h$.
\begin{figure}[ht]
\centering{
\includegraphics[scale=.3]{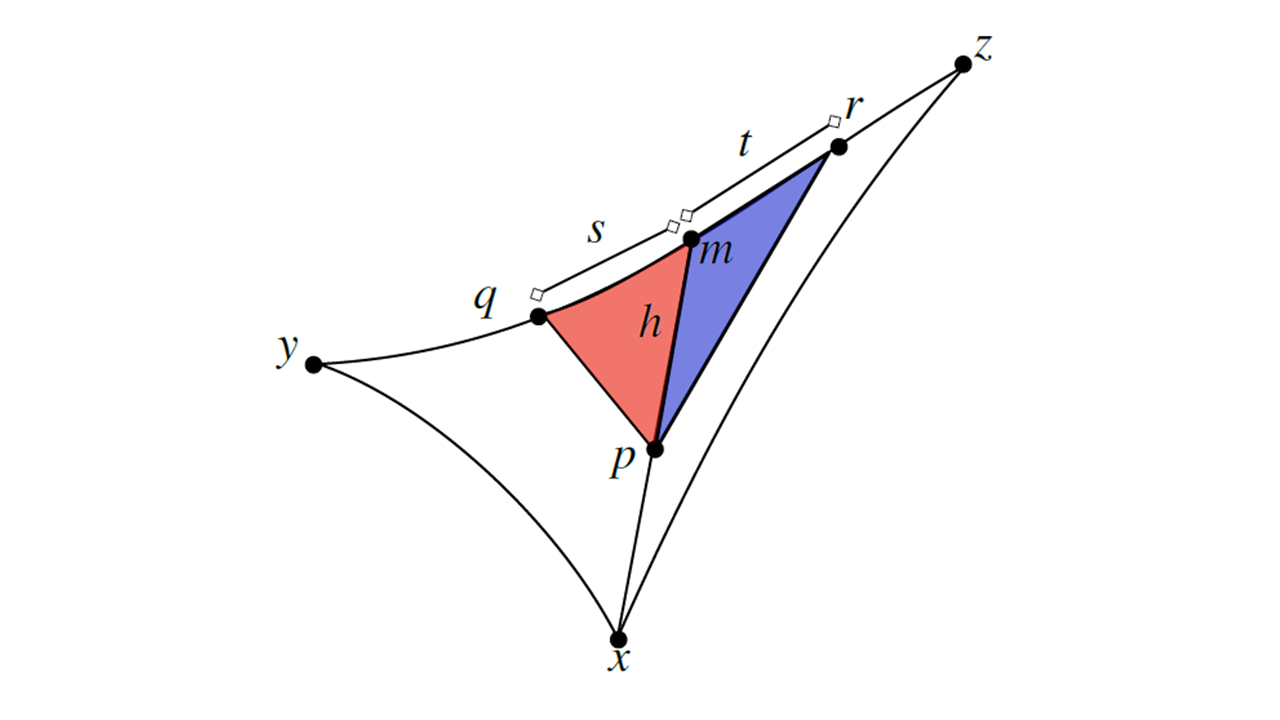}
\caption{Timelike geodesic triangle $(x,y,z)$ in $X$}
}
\end{figure}
Let $\triangle p_1q_1r_1$, $\triangle p_2q_2m_2$ and $\triangle p_3m_3r_3$ comparison triangles in $\mathbb{M}^L_k$ for triangles $(q,p,r)$, $(p,q,m)$ and $(p,m,r)$, respectively. Let us denote $\bar{\tau}(p_1,m_1)=h_1$.
\begin{figure}[h]
\centering{
\includegraphics[scale=.3]{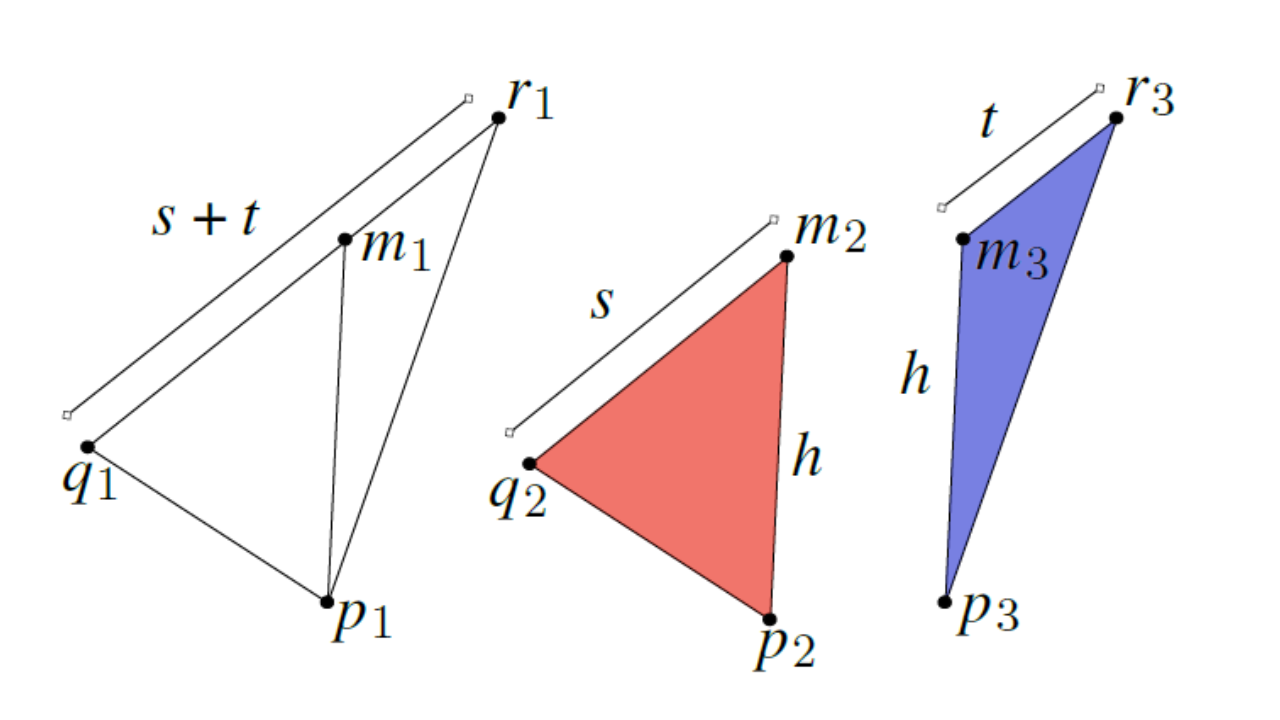}
\caption{Comparison triangles in $\mathbb{M}_{k}^{L}$}
}
\end{figure}
By the curvature condition we have $h_1\geq h$, thus by Lemma \ref{HingeLemmaMk} we have $\measuredangle p_1q_1m_1 \geq \measuredangle p_2q_2m_2$ and $\measuredangle p_1r_1m_1\geq \measuredangle p_3r_3m_3$. Now applying Lemma \ref{ConverseSL} we deduce
\[
\begin{array}{rcl}
\left(1-\frac{s}{s+t}\right)\measuredangle q_2m_2p_2 + \left(\frac{s}{s+t}\right) \measuredangle r_3m_3p_3 &\geq& 0 \\
\left(\frac{t}{s+t}\right)(sh\theta^{k}_{\sigma,\beta_{my}}(h,s)) + \left(\frac{s}{s+t}\right)(th\theta^{k}_{\beta_{mz},\sigma}(t,h)) &\geq& 0\\
\theta^{k}_{\sigma,\beta_{my}}(h,s) + \theta^{k}_{\beta_{mz},\sigma}(t,h) &\geq& 0
\end{array}
\]
Since $\angle xmz=\lim\limits_{{t,h}\to 0}\theta^{k}_{\beta_{mz},\sigma}(t,h)$ is non-negative, the above inequality provides a lower bound for $\theta^{k}_{\sigma,\beta_{my}}(h,s)$. Thus $\angle ymx $ exists also. Taking the limit when $s,t,h$ go to $0$ we conclude the desired inequality
\[
\measuredangle ymx + \measuredangle xmz \geq 0.
\]
\end{proof}

\begin{remark}
We emphasize that Proposition \ref{SumGreaterCero} is in fact a global result (as long as timelike size bounds for $k$ are satisfied). We also would like to remark that in virtue of this result, we can show that in any Lorentzian pre-length space with lower timelike curvature bounds all three normalized angles in a geodesic triangle satisfying timelike size bounds exist and are finite, provided that its sides can be extended past their vertices as maximal timelike curves.
\end{remark}

\begin{remark}
When the normalized angles exist, their value is independent of the comparison model space $\mathbb{M}_k^L$ in which the angle comparison functions $\theta^{k}_{\alpha,\gamma}$, $\theta^{k}_{\gamma,\beta}$,  $\theta^{k}_{\beta,\alpha}$ are defined. Intuitively, this is expected in the Riemannian setting since all Riemannian model spaces are conformally equivalent and angles between curves are preserved under conformal transformations. In practice, the proof relies heavily on the Law of cosines for the model spaces. The same is true for Lorentzian spaceforms, thus following closely the argument in Proposition 2.9 in \cite{Bridson} and using the Lorentzian Laws of cosines (see for example \cite{Birman}, \cite{Laws}) we can show the aforementioned independence in the context of Lorentzian pre-length spaces as well.
\end{remark}

As it turns out, the inequality of Proposition \ref{SumGreaterCero} enables us to have a partial converse to Theorem \ref{Toponogov}.

\begin{theorem}
Let $(X,d,\ll,\leq,\tau)$ be a Lorentzian pre-length space such that for any geodesic triangle $(x,y,z)$, realized by maximal timelike curves $\alpha$, $\beta$, $\gamma$ in a compatible neighborhood $U$, all normalized angles exist and satisfy
    \[
    \measuredangle yxz \leq \dst\frac{\wangle_{k}yxz}{ac}, \ \measuredangle xyz \leq \dst\frac{\wangle_{k}xyz}{ab}, \ \measuredangle xzy \leq \dst\frac{\wangle_{k}xzy}{bc}
    \]
    Furthermore,
    \begin{enumerate}
    \item If $p\in\beta$ then $\measuredangle ypx + \measuredangle xpz \geq 0$.
    \item If $p\in \alpha$ then $\measuredangle xpz + \measuredangle zpy \geq 0$.
    \item If $p\in \gamma$ with $y\ll p$ or $p\ll y$ then $\measuredangle xpy + \measuredangle ypz \geq 0$.
    \end{enumerate}
Then $(X,d,\ll,\leq,\tau)$ has timelike curvature bounded below by $k$.
\end{theorem}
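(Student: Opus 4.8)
The plan is to reduce everything to Proposition~\ref{CeviansCriteria} and verify its hypothesis: that for every point $w$ on a side of a timelike geodesic triangle $(x,y,z)$ in a compatible neighborhood $U$ and every opposite vertex $v$, one has $\tau(v,w)\le\bar\tau(\bar v,\bar w)$ whenever $\tau(v,w)>0$ (and the symmetric statement when $\tau(w,v)>0$). I would carry out only the representative case $v=x$ with $w$ a point on $\beta$ satisfying $x\ll w$; the cevians from $z$ onto $\alpha$ and from $y$ onto $\gamma$ correspond exactly to the sum hypotheses (2) and (3) and follow by the same scheme (the comparability clause in (3) being precisely what guarantees there is something to prove), together with the evident time-reversal symmetry.

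First I would fix a maximal timelike cevian $\sigma$ from $x$ to $w$. Since $x\ll y\ll w\ll z$, this splits $(x,y,z)$ into two timelike geodesic triangles $(x,y,w)$ and $(x,w,z)$, both obeying timelike size bounds for $k$ by Remark~\ref{GeodesicPropertiesU}. Let $\triangle x_1y_1w_1$ and $\triangle x_2w_2z_2$ be their comparison triangles in $\mathbb{M}_k^L$, and let $\bar w$ be the point of $\bar\beta$ in the full comparison triangle $\triangle\bar x\bar y\bar z$ corresponding to $w$. These three model triangles are set up to be fed into the Straightening Lemma~\ref{SLemma} with apex $q=x$, split side from $p=y$ to $r=z$, and foot $m=w$; all side-length matchings it requires hold automatically, since the two sub-triangles share the cevian length $\tau(x,w)$ and each matches the full triangle along $\alpha$, $\beta$ or $\gamma$.

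The decisive step is to verify the shoulder-sum hypothesis of Lemma~\ref{SLemma}, namely $(1-\lambda)\wangle_k xwy+\lambda\wangle_k xwz\ge 0$ with $\lambda=\tau(y,w)/\tau(y,z)$, the two terms being the comparison angles at $w$ in the sub-triangles. Applying the Toponogov inequalities assumed in the present theorem to each sub-triangle at the vertex $w$ (the top vertex of $(x,y,w)$ and the middle vertex of $(x,w,z)$) gives the lower bounds $\wangle_k xwy\ge\tau(x,w)\tau(y,w)\,\measuredangle xwy$ and $\wangle_k xwz\ge\tau(x,w)\tau(w,z)\,\measuredangle xwz$. The key algebraic observation is that, because $\tau(y,z)=\tau(y,w)+\tau(w,z)$ along $\beta$, one has $(1-\lambda)\tau(y,w)=\lambda\tau(w,z)=\tau(y,w)\tau(w,z)/\tau(y,z)$; hence the weighted sum collapses to $\tfrac{\tau(x,w)\tau(y,w)\tau(w,z)}{\tau(y,z)}\bigl(\measuredangle ywx+\measuredangle xwz\bigr)$, which is nonnegative by the sum hypothesis (1). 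This is exactly the shoulder-sum condition.

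With that hypothesis in hand, Lemma~\ref{SLemma} returns the corner inequality $\measuredangle\bar x\bar y\bar w\ge\measuredangle x_1y_1w_1$. Since $\triangle\bar x\bar y\bar w$ and $\triangle x_1y_1w_1$ agree in the two side lengths $\tau(x,y)$ and $\tau(y,w)$ meeting at $y$, the Hinge Lemma~\ref{HingeLemmaMk} converts this into $\bar\tau(\bar x,\bar w)\ge\bar\tau(x_1,w_1)=\tau(x,w)$, which is condition (i) of Proposition~\ref{CeviansCriteria}; that proposition then delivers the lower curvature bound. I expect the main obstacle to be precisely the bookkeeping in this shoulder-sum verification: keeping the sign conventions straight between the (negative) top-vertex angles and the (positive) middle-vertex angle, and checking that the side-length weights produced by Toponogov coincide so that the normalized-angle sum hypothesis is exactly the combination the Straightening Lemma demands.
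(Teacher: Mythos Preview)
Your proposal is correct and follows essentially the same argument as the paper: reduce to Proposition~\ref{CeviansCriteria}, split the triangle at $w\in\beta$ into $(x,y,w)$ and $(x,w,z)$, use the assumed Toponogov inequalities at $w$ in each sub-triangle to turn the sum hypothesis~(1) into the shoulder-sum hypothesis of Lemma~\ref{SLemma}, and finish with Lemma~\ref{HingeLemmaMk}. The paper carries out exactly this computation (with the notation $p$ for your $w$, $b_1=\tau(y,p)$, $b_2=\tau(p,z)$, $h=\tau(x,p)$), and your observation that $(1-\lambda)\tau(y,w)=\lambda\tau(w,z)$ is precisely the cancellation the paper exploits to factor out the common coefficient $b_1b_2h/b$.
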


\begin{proof}
 We show that $U$ satisfies the conditions of Proposition \ref{CeviansCriteria}. Here we handle the case $p\in\beta$ and since the remaining ones are completely analogous. Just to simplify notation set $b_1=\tau(y,p)$, $b_2=\tau(p,z)$ (thus $b=b_1+b_2$), $h=\tau(x,p)$ and $\bar{h}=\bar{\tau}(\bar{x},\bar{p})$. We want to prove that $\bar{h}\geq h$. Let $\triangle x_1y_1p_1$ and $\triangle x_2p_2z_2$ comparison triangles in $\mathbb{M}_{k}^{L}$ for triangles $(x,y,p)$ and $(x,p,z)$, respectively. Thus
\[
\begin{array}{rcl}
\left(1-\frac{b_1}{b}\right)\measuredangle y_1p_1x_1 + \left(\frac{b_1}{b}\right)\measuredangle x_2p_2y_2 &=& \left(\frac{b_2}{b}\right)\measuredangle y_1p_1x_1 + \left(\frac{b_1}{b}\right)\measuredangle x_2p_2y_2 \\
&\geq & \left(\frac{b_2}{b}\right) \left(b_1h\measuredangle ypx\right) + \left(\frac{b_1}{b}\right) \left(b_2h\measuredangle xpz\right) \\
&=& \left(\frac{b_1b_2h}{b}\right) \left(\measuredangle ypx + \measuredangle xpz \right) \geq 0, \\
\end{array}
\]
therefore $\left(1-\frac{b_1}{b}\right)\measuredangle y_1p_1x_1 + \left(\frac{b_1}{b}\right)\measuredangle x_2p_2y_2 \geq 0$. Then by Lemma \ref{SLemma} we get $\measuredangle \bar{x}\bar{y}\bar{p} \geq \measuredangle x_1y_1p_1$. Using Lemma \ref{HingeLemmaMk} we conclude $\bar{\tau}(\bar{x},\bar{p})\geq \bar{\tau}(x_1,p_1)$, which implies $\bar{h}\geq h$.
\end{proof}

\section{First variation for nonnegatively curved Lorentzian length spaces}\label{sec:firstvar}

This section is devoted to proving a first variation formula for globally hyperbolic length spaces with timelike curvature bounded below by zero. We first show a local version for pre-length spaces and then use it for the global result.  Throughout this section, we use repeatedily the Law of Cosines in Minkowski space $\mathbb{R}^2_1$, thus we state it for ease of reference.

\begin{lemma}[Law of Cosines for $\mathbb{R}^{2}_{1}$]\label{MinkowskiLawCosines}
Given a timelike geodesic triangle $\triangle xyz$ satisfying curvature bounds in $\mathbb{R}^{2}_{1}$ with $x\ll y \ll z$ and $\tau_{\mathbb{R}^{2}_{1}}$ the time separation function in $\mathbb{R}^{2}_{1}$ we have
\[
\begin{array}{rcl}
\tau_{\mathbb{R}^{2}_{1}}(y,z)^{2} &=& \tau_{\mathbb{R}^{2}_{1}}(x,y)^2 + \tau_{\mathbb{R}^{2}_{1}}(x,z)^2 + 2\measuredangle yxz, \\
\tau_{\mathbb{R}^{2}_{1}}(x,y)^{2} &=& \tau_{\mathbb{R}^{2}_{1}}(x,z)^2 + \tau_{\mathbb{R}^{2}_{1}}(y,z)^2 + 2\measuredangle xzy, \\
\tau_{\mathbb{R}^{2}_{1}}(x,z)^{2} &=& \tau_{\mathbb{R}^{2}_{1}}(x,y)^2 + \tau_{\mathbb{R}^{2}_{1}}(y,z)^2 + 2\measuredangle zyx, \\
\end{array}
\]
\end{lemma}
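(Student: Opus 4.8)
The plan is to prove the Law of Cosines for $\mathbb{R}^2_1$ by reducing it to the standard algebraic identity that holds for the Minkowski inner product, exploiting the fact that the non-normalized angle $\measuredangle$ is by definition the inner product of the velocity vectors of the two sides meeting at a vertex. Since the model space here is flat Minkowski space $\mathbb{R}^2_1$, every maximal timelike geodesic is an affine segment, so I can work directly with vectors rather than with the intrinsic geometry of a curved model. First I would fix the vertices $x, y, z$ with $x \ll y \ll z$ and realize the three sides as affine segments; concretely I would set $u = y - x$, $w = z - x$, and $v = z - y = w - u$, all future-directed timelike vectors in $\mathbb{R}^2_1$.

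Next I would record the relationship between $\tau_{\mathbb{R}^2_1}$ and the Minkowski inner product. For a future-directed timelike vector $a$, the time separation between its endpoints satisfies $\tau_{\mathbb{R}^2_1}^2 = -\langle a, a\rangle$ (with the signature convention making timelike vectors have negative self-inner-product); thus $\tau(x,y)^2 = -\langle u,u\rangle$, $\tau(x,z)^2 = -\langle w,w\rangle$, and $\tau(y,z)^2 = -\langle v,v\rangle$. The crucial point is to match the definition of the non-normalized angle: $\measuredangle yxz = \langle \alpha'(0), \gamma'(0)\rangle$ where $\alpha, \gamma$ are the sides from $x$ parametrized on $[0,1]$, so $\alpha'(0) = u$ and $\gamma'(0) = w$, giving $\measuredangle yxz = \langle u, w\rangle$. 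With these identifications the first claimed identity becomes the purely algebraic statement
\[
-\langle v, v\rangle = -\langle u,u\rangle - \langle w,w\rangle + 2\langle u,w\rangle,
\]
which is immediate from $v = w - u$ by expanding $\langle w-u, w-u\rangle = \langle w,w\rangle - 2\langle u,w\rangle + \langle u,u\rangle$.

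For the remaining two identities I would repeat the same bilinear expansion, but taking care that the angle at each vertex is the inner product of the two sides emanating from that vertex with the correct orientation dictated by the definition. At vertex $z$ the relevant outgoing-from-$z$ vectors are $-w = x - z$ and $-v = y - z$, so $\measuredangle xzy = \langle -w, -v\rangle = \langle w, v\rangle$; at vertex $y$ the sides give $\measuredangle zyx = \langle v, -u\rangle$ type expressions. The main obstacle, and the only genuinely delicate point, is bookkeeping the parametrizations and orientations so that each $\measuredangle$ carries the sign and vector pair consistent with the stated Definition of the non-normalized angle, since an incorrect orientation would flip a sign in the cross term. Once these are pinned down, each identity is again a one-line expansion of $\langle \cdot,\cdot\rangle$ applied to the appropriate linear combination of $u, v, w$, and I would verify that the parametrization-by-$[0,1]$ normalization used in the angle definition is consistent with treating the full side vectors as the velocities (as opposed to unit-speed parametrizations, which would introduce normalizing factors and is precisely what the term \emph{non-normalized} avoids).
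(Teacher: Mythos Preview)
Your argument is correct. The paper does not actually supply a proof of this lemma: it is stated ``for ease of reference'' immediately before Lemma~\ref{ConvergenTriangleMinkowski} and then used without further justification, so there is nothing to compare against. Your bilinear expansion is the standard (and essentially the only) route: once the non-normalized angle is identified as $\measuredangle yxz=\langle y-x,\,z-x\rangle$ via the $[0,1]$-parametrization in the paper's definition, each identity is a one-line computation with $v=w-u$, $u=w-v$, $w=u+v$. Your caution about orientation at the vertices $y$ and $z$ is well placed; the signs you wrote down ($\measuredangle xzy=\langle w,v\rangle$ and $\measuredangle zyx=-\langle u,v\rangle$) are exactly what the definition gives and make all three formulas check out.
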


As an immediate consequence we have the following result

\begin{lemma}\label{ConvergenTriangleMinkowski}
Let $\{\triangle x_iy_iz_i\}_{i=1}^{\infty}$ a sequence of timelike geodesic triangles in $\mathbb{R}^2_1=\mathbb{M}^L_0$ such that
\begin{enumerate}
\item $\tau_{\mathbb{R}^{2}_{1}}(x_i,y_i)=a$ and $\tau_{\mathbb{R}^{2}_{1}}(x_i,z_i)=c$.
\item 
$\dst\lim_{i\to\infty} x_i = x, \dst\lim_{i\to\infty} y_i = y, \dst\lim_{i\to\infty} z_i = z$,
\item  $\lim\limits_{i\to\infty}\tau_{\mathbb{R}^{2}_{1}}(y_i,z_i)=\tau_{\mathbb{R}^{2}_{1}}(y,z)$.
\end{enumerate}
Then $\dst\lim_{i\to\infty} \measuredangle y_ix_iz_i = \measuredangle yxz$ .
\end{lemma}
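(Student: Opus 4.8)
The plan is to invoke the Law of Cosines (Lemma \ref{MinkowskiLawCosines}) to express the non-normalized angle at the first vertex as an explicit function of the three squared side lengths, and then pass to the limit. Since that function is continuous and two of the three side lengths are held constant while the third converges by hypothesis, the convergence of the angles is immediate.

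Concretely, solving the first identity of Lemma \ref{MinkowskiLawCosines} for each triangle $\triangle x_iy_iz_i$ gives
\[
\measuredangle y_ix_iz_i = \frac{1}{2}\left[\tau_{\mathbb{R}^2_1}(y_i,z_i)^2 - \tau_{\mathbb{R}^2_1}(x_i,y_i)^2 - \tau_{\mathbb{R}^2_1}(x_i,z_i)^2\right] = \frac{1}{2}\left[\tau_{\mathbb{R}^2_1}(y_i,z_i)^2 - a^2 - c^2\right],
\]
where the second equality uses hypothesis (1). By hypothesis (3) the bracketed expression converges, so
\[
\lim_{i\to\infty}\measuredangle y_ix_iz_i = \frac{1}{2}\left[\tau_{\mathbb{R}^2_1}(y,z)^2 - a^2 - c^2\right].
\]
It then remains to identify this limit with $\measuredangle yxz$. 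Using the continuity of $\tau_{\mathbb{R}^2_1}$ together with hypothesis (2), one has $\tau_{\mathbb{R}^2_1}(x,y)=a>0$ and $\tau_{\mathbb{R}^2_1}(x,z)=c>0$, so that $x\ll y$ and $x\ll z$; moreover, passing to the limit in the reverse triangle inequality $c\geq a+\tau_{\mathbb{R}^2_1}(y_i,z_i)$ shows the limiting configuration still obeys the timelike size bounds for $k=0$. Applying Lemma \ref{MinkowskiLawCosines} to the limit triangle $(x,y,z)$ therefore yields
\[
\measuredangle yxz = \frac{1}{2}\left[\tau_{\mathbb{R}^2_1}(y,z)^2 - a^2 - c^2\right],
\]
which coincides with the computed limit, completing the argument.

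The only delicate point is the well-definedness of $\measuredangle yxz$ for the limiting triangle, that is, checking the limit configuration does not degenerate. Because the two side lengths issuing from $x$ are pinned at the positive values $a$ and $c$ and hence do not collapse, the geodesics emanating from $x$ retain definite initial velocities, so the defining inner product $\langle\alpha'(0),\gamma'(0)\rangle$ remains meaningful and no further estimates are needed; the result is genuinely an immediate corollary of the Law of Cosines.
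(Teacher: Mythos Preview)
Your proof is correct and follows exactly the approach the paper intends: the paper does not give a proof at all, merely introducing the lemma with the phrase ``As an immediate consequence we have the following result'' right after stating the Law of Cosines (Lemma~\ref{MinkowskiLawCosines}). You have simply written out that immediate consequence in detail---solving the first identity of Lemma~\ref{MinkowskiLawCosines} for the angle, passing to the limit in the side lengths, and identifying the result with the angle of the limit triangle---which is precisely what was left to the reader.
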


\begin{proposition}[Semi-continuity of angles]\label{SemicontinuityAngles}
Let $(X,d,\ll,\leq,\tau,)$ a Lorentzian pre-length space with timelike curvature bounded by $k=0$ and $U$ a comparison neighborhood with respect to $\mathbb{R}^{2}_{1}$. Let $\{(x_i,y_i,z_i)\}_{i=1}^\infty$ a sequence of timelike geodesic triangles realized by maximal curves $\alpha_i$, $\beta_i$, $\gamma_i$ whose side lengths satisfy timelike size bounds for $0$. Suppose
\[
\dst\lim_{i\to\infty} x_i = x, \dst\lim_{i\to\infty} y_i = y, \dst\lim_{i\to\infty} z_i = z, \dst\lim_{i\to\infty} \alpha_i = \alpha, \dst\lim_{i\to\infty} \beta_i = \beta, \dst\lim_{i\to\infty} \gamma_i = \gamma,
\]
and $\alpha_i\to\alpha$, $\beta_i\to\beta$, $\gamma_i\to\gamma$
 uniformly where $\alpha$, $\beta$, $\gamma$ are maximal timelike curves that satisfy timelike size bounds for $k=0$. Finally, suppose all normalized angles are finite. Then
\[
\dst\limsup_{i\to\infty} \measuredangle y_ix_iz_i \leq \measuredangle yxz.
\]
\end{proposition}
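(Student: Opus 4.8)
The plan is to bound each $\measuredangle y_ix_iz_i$ from above by a single value of an angle comparison function evaluated at a \emph{fixed} pair of parameters, pass to the limit in $i$, and only afterwards shrink those parameters. The structural input is the monotonicity of Theorem \ref{MonotonicityCriterion}: since $\theta^0_{\alpha_i,\gamma_i}(s',t')\le \theta^0_{\alpha_i,\gamma_i}(s,t)$ whenever $s'\le s$ and $t'\le t$, and since by Definition \ref{LorentzianAngle} the normalized angle is the limit of $\theta^0_{\alpha_i,\gamma_i}$ as $(s,t)\to 0$, this limit is in fact the infimum. Hence, for every admissible pair $(s,t)$,
\[
\measuredangle y_ix_iz_i \;\le\; \theta^0_{\alpha_i,\gamma_i}(s,t) \;=\; \frac{\wangle_0\,\alpha_i(s)\,x_i\,\gamma_i(t)}{st}.
\]

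First I would fix a small pair $(s,t)$ for which $\alpha(s)\ll\gamma(t)$ holds in the limiting triangle; such pairs exist by the remark following Definition \ref{preAngle}. Because $\tau$ is lower semicontinuous and the relation $\alpha(s)\ll\gamma(t)$ is equivalent to $\tau(\alpha(s),\gamma(t))>0$, and since $\alpha_i(s)\to\alpha(s)$, $\gamma_i(t)\to\gamma(t)$ by the uniform convergence hypothesis, one obtains $\alpha_i(s)\ll\gamma_i(t)$ for all large $i$; likewise $s$ and $t$ stay below the lengths $\tau(x_i,y_i)$ and $\tau(x_i,z_i)$ eventually, so $\theta^0_{\alpha_i,\gamma_i}(s,t)$ is well defined along the tail of the sequence. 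This settles the admissibility of the chosen parameters.

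Next I would evaluate the apex comparison angle through the Law of Cosines in $\mathbb{R}^2_1$ (Lemma \ref{MinkowskiLawCosines}). As $\alpha_i$ and $\gamma_i$ are parametrized by arc length we have $\tau(x_i,\alpha_i(s))=s$ and $\tau(x_i,\gamma_i(t))=t$, and since the non-normalized angle at the apex is symmetric in its two rays, independently of whether $\alpha_i(s)\ll\gamma_i(t)$ or $\gamma_i(t)\ll\alpha_i(s)$,
\[
\wangle_0\,\alpha_i(s)\,x_i\,\gamma_i(t) \;=\; \frac{1}{2}\left(\tau(\alpha_i(s),\gamma_i(t))^2 - s^2 - t^2\right).
\]
By continuity of $\tau$ on the comparison neighborhood $U$ together with $x_i\to x$, $\alpha_i(s)\to\alpha(s)$ and $\gamma_i(t)\to\gamma(t)$, the right-hand side converges to $\frac{1}{2}\left(\tau(\alpha(s),\gamma(t))^2-s^2-t^2\right)=\wangle_0\,\alpha(s)\,x\,\gamma(t)$; this is exactly Lemma \ref{ConvergenTriangleMinkowski} applied to the comparison triangles. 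Taking $\limsup$ in the first display then gives
\[
\limsup_{i\to\infty}\measuredangle y_ix_iz_i \;\le\; \frac{\wangle_0\,\alpha(s)\,x\,\gamma(t)}{st} \;=\; \theta^0_{\alpha,\gamma}(s,t).
\]

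Finally I would let $(s,t)\to 0$ along admissible pairs and invoke Definition \ref{LorentzianAngle}, so the right-hand side tends to $\measuredangle yxz$, yielding the claim. I expect the main obstacle to be the bookkeeping of admissibility along the sequence, namely guaranteeing that the fixed $(s,t)$ keeps $\theta^0_{\alpha_i,\gamma_i}(s,t)$ defined for all large $i$; this is precisely where the openness of $\ll$ (equivalently, lower semicontinuity of $\tau$) and the uniform convergence hypotheses are indispensable, while the assumed finiteness of $\measuredangle yxz$ ensures the final bound is a genuine real number rather than $-\infty$.
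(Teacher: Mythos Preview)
Your proposal is correct and follows essentially the same route as the paper: fix admissible parameters $(s,t)$, use the monotonicity of $\theta^0$ to bound $\measuredangle y_ix_iz_i\le \theta^0_{\alpha_i,\gamma_i}(s,t)$, pass to the limit in $i$ via continuity of $\tau$ on $U$ and the Law of Cosines (the paper packages this step as Lemma~\ref{ConvergenTriangleMinkowski}), and then let $(s,t)\to 0$. The only cosmetic difference is that the paper phrases the final step as an $\varepsilon$-argument rather than invoking the infimum characterization directly.
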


\begin{proof}
Fix $\varepsilon>0$. Since the definition of $\measuredangle yxz$, there exists $\delta>0$ with the next property: for every $t,s<\delta$ such that $\alpha(s)\ll \gamma(t)$ or $\gamma(t)\ll \alpha(s)$ we have
\[
\measuredangle yxz > \theta^{k}_{\alpha,\gamma}(s,t) - \varepsilon.
\]
Take $s,t<\delta$ with such property and denote $p=\alpha(s)$, $q=\gamma(t)$, $p_i=\alpha_i(s)$ and $q_i=\gamma_i(t)$. If there exists a subsequence $\{p_{i_j},q_{i_j}\}$ such that $\tau(p_{i_j},q_{i_j})=0$, then by the continuity of $\tau$ in $U$ we would have $\tau(p,q)=0$, which is a contradiction. Thus, we can find $N\in\mb{N}$ such that $\tau(p_i,q_i)>0$ for all $i\geq N$. Let $\triangle \bar{x}_{i}\bar{p}_{i}\bar{q}_{i}$ and $\triangle \bar{x}\bar{p}\bar{q}$ comparison triangles for $(x_i,p_i,q_i)$ and $(x,p,q)$, respectively. Further $\dst\lim_{i\to\infty}\tau(p_i,q_i)=\tau(p,q)$. On the other hand observe that $\dst\lim_{i\to\infty} \theta^{k}_{\alpha_i,\gamma_i}(s,t) = \theta^{k}_{\alpha,\gamma}(s,t)$ since $\measuredangle p_ix_iq_i\to \measuredangle pxq$ as $i\to\infty$ because of Lemma \ref{ConvergenTriangleMinkowski}. So there exists $M\in\mb{N}$ such that
\[
\theta^{k}_{\alpha,\gamma}(s,t)> \theta^{k}_{\alpha_i,\gamma_i}(s,t)-\varepsilon,
\]
for all $i\geq M$. In conclusion, for all $i\geq \max\{N,M\}$ we have
\[
\measuredangle yxz \geq \theta^{k}_{\alpha,\gamma}(s,t) - \varepsilon > \theta^{k}_{\alpha_i,\gamma_i}(s,t)-2\varepsilon \geq \measuredangle y_ix_iz_i - 2\varepsilon.
\]
Hence
\[
\dst\limsup_{i\to\infty} \measuredangle y_ix_iz_i \leq \measuredangle yxz.
\]
\end{proof}

\begin{proposition}\label{LimInfVF}
Let $(X,d,\ll,\leq,\tau)$ be a Lorentzian pre-length space with timelike curvature bounded by $0$ and $U$ a comparison neighborhood with respect to $\mathbb{R}^{2}_{1}$. Set a timelike geodesic triangle $(x,y,z)$ in $U$ realized by maximal curves $\alpha$, $\beta$, $\gamma$ whose side lengths satisfy timelike size bounds for $0$. For every $0\leq t \leq \tau(x,y)$ we define $\ell(t)=\tau(\alpha(t),z)$. Then
\[
\dst\liminf_{t\to 0^{+}} \dst\frac{\ell(t)-\ell(0)}{t} \geq \measuredangle yxz.
\]
\end{proposition}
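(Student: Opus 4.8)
The plan is to reduce the statement to a single application of the Law of Cosines in $\mathbb{R}^2_1$, followed by the monotonicity of the angle comparison function. First I would record the elementary facts: since $\alpha$ is parametrized by arc length, $\tau(x,\alpha(t))=t$, and $\ell(0)=\tau(x,z)=c>0$; moreover, because $x\ll\alpha(t)\ll y\ll z$, the push-up property gives $\alpha(t)\ll z$, so that for each $t\in(0,a]$ the triple $(x,\alpha(t),z)$ is a genuine timelike geodesic triangle in $U$ whose sides satisfy timelike size bounds for $0$ (for $k=0$ this is merely the reverse triangle inequality; cf. Remark \ref{GeodesicPropertiesU}). In particular $\theta^{0}_{\alpha,\gamma}(t,c)$ is well defined for every such $t$.

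Next I would apply Lemma \ref{MinkowskiLawCosines} to a comparison triangle $\triangle\bar{x}\bar{p}\bar{q}$ of $(x,\alpha(t),z)$ in $\mathbb{R}^2_1$, whose comparison side lengths are $t$, $c$ and $\ell(t)$, and whose comparison angle at the vertex corresponding to $x$ is exactly $\wangle_{0}\alpha(t)xz=tc\,\theta^{0}_{\alpha,\gamma}(t,c)$ by Definition \ref{preAngle}. This yields the key identity
\[
\ell(t)^2 = t^2 + c^2 + 2tc\,\theta^{0}_{\alpha,\gamma}(t,c).
\]
Subtracting $\ell(0)^2=c^2$ and factoring, I would rewrite the difference quotient as
\[
\frac{\ell(t)-\ell(0)}{t} = \frac{\ell(t)^2-\ell(0)^2}{t\,(\ell(t)+\ell(0))} = \frac{t + 2c\,\theta^{0}_{\alpha,\gamma}(t,c)}{\ell(t)+\ell(0)},
\]
which is legitimate because $\ell(0)=c>0$ keeps the denominator bounded away from $0$.

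The decisive step is the monotonicity of Theorem \ref{MonotonicityCriterion}: since $\measuredangle yxz=\lim_{s,t\to0}\theta^{0}_{\alpha,\gamma}(s,t)$ is the infimum of the comparison function, we have $\theta^{0}_{\alpha,\gamma}(t,c)\ge\measuredangle yxz$ for every admissible $t$. Using $c>0$ to substitute this lower bound and letting $t\to0^{+}$, continuity of $\tau$ (so that $\ell(t)\to\ell(0)=c$) gives
\[
\liminf_{t\to0^{+}}\frac{\ell(t)-\ell(0)}{t} \ge \liminf_{t\to0^{+}}\frac{t+2c\,\measuredangle yxz}{\ell(t)+\ell(0)} = \frac{2c\,\measuredangle yxz}{2c} = \measuredangle yxz,
\]
as claimed; if $\measuredangle yxz=-\infty$ the inequality is vacuous.

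The main obstacle I anticipate is bookkeeping rather than conceptual: one must verify that the comparison angle produced by the Law of Cosines is precisely $\wangle_{0}\alpha(t)xz$ and that it equals $tc\,\theta^{0}_{\alpha,\gamma}(t,c)$, which hinges on the arc-length normalization and a careful reading of Definition \ref{preAngle}. One must also confirm the chronological hypothesis $\alpha(t)\ll z$ needed for $\theta^{0}_{\alpha,\gamma}(t,c)$ to be defined, and note that the \emph{second} argument is held fixed at $c$ while only the first tends to $0$; thus it is monotonicity---not the existence of the genuine two-variable limit---that furnishes the crucial bound $\theta^{0}_{\alpha,\gamma}(t,c)\ge\measuredangle yxz$.
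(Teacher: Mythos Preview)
Your proof is correct and follows essentially the same route as the paper: apply the Law of Cosines in $\mathbb{R}^2_1$ to the comparison triangle of $(x,\alpha(t),z)$ to express the difference quotient in terms of $\theta^0_{\alpha,\gamma}(t,c)$, bound this below by $\measuredangle yxz$, and pass to the limit. The only difference is cosmetic: the paper introduces an auxiliary point $\gamma(s)$ and re-derives inline (via the curvature condition and the Hinge Lemma) the inequality $\theta^0_{\alpha,\gamma}(t,s)\le\theta^0_{\alpha,\gamma}(t,c)$ before invoking $\theta^0_{\alpha,\gamma}(t,s)\ge\measuredangle yxz$, whereas you cite Theorem~\ref{MonotonicityCriterion} directly to get $\theta^0_{\alpha,\gamma}(t,c)\ge\measuredangle yxz$ in one step---a harmless and slightly cleaner shortcut.
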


\begin{proof}
Let $c=\tau (x,z)$. We only consider the case when $s\in(0,c]$ satisfies $\alpha(t)\ll \gamma(s)$, since  the case $\gamma(s)\ll \alpha(t)$ is similar. Let $\triangle x_1p_1q_1$ and $\triangle x_2p_2q_2$ be comparison triangles for $(x,\alpha(t),z)$ and $(x,\alpha(t),\gamma(s))$.
\begin{figure}[h]
\centering{
\includegraphics[scale=.3]{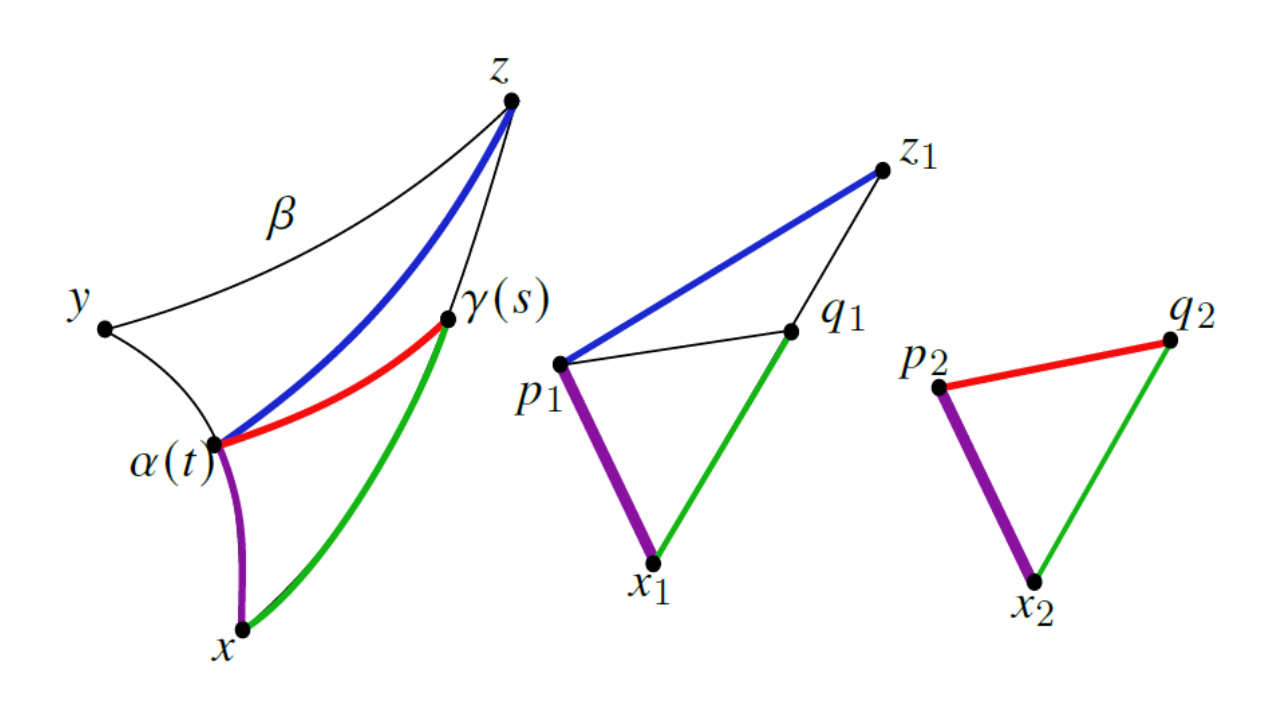}
\caption{Variation Formula.}
}
\end{figure}
Now observe that 
\[
\theta_{\alpha,\gamma}(t,s)=\dst\frac{\measuredangle p_2x_2q_2}{ts}.,
\]
and using Lemma \ref{RescalAngulo} we also have $\measuredangle p_1x_1q_1 = \frac{s}{c}\measuredangle p_1x_1z_1$. Thus, applying Lemma \ref{MinkowskiLawCosines} in triangle $\triangle p_1x_1z_1$ we get
\[
\begin{array}{rcl}
\dst\frac{\measuredangle p_1x_1q_1}{ts} &=& \dst\frac{ \frac{s}{c}\measuredangle p_1x_1z_1 }{ts} \\
&=& \dst\frac{\measuredangle p_1x_1z_1}{ct} \\
&=& \dst\frac{\ell(t)^2 - c^2-t^2}{2ct} \\
&=& \left(\dst\frac{\ell(t)-c}{t}\right) \left(\dst\frac{\ell(t)+c}{2c}\right) - \dst\frac{t}{2c},
\end{array}
\]
therefore $\dst\frac{\measuredangle p_1x_1q_1}{ts} = \left(\dst\frac{\ell(t)-c}{t}\right) \left(\dst\frac{\ell(t)+c}{2c}\right) - \dst\frac{t}{2c}$. On the other hand, by the curvature conditions we  have 
\[
\overline{\tau}(p_2,q_2) = \tau(\alpha(t), \gamma(s)) \leq \overline{\tau}(p_1,q_1),
\]
which means that $\measuredangle p_2x_2q_2\leq \measuredangle p_1x_1q_1$ because of Lemma \ref{HingeLemmaMk}. In conclusion
\[
\begin{array}{rcl}
\left(\dst\frac{\ell(t)-c}{t}\right) \left(\dst\frac{\ell(t)+c}{2c}\right) - \dst\frac{t}{2c} &=& \dst\frac{\measuredangle p_1x_1q_1}{ts} \\
&\geq& \dst\frac{\measuredangle p_2x_2q_2}{ts} \\
&=& \theta_{\alpha,\gamma}(s,t) \\
&\geq& \theta_{\alpha,\gamma}(s,t)- \dst\frac{t}{2c} \\
&\geq& \measuredangle yxz- \dst\frac{t}{2c}.
\end{array}
\]
Taking the limit when $t\to 0^{+}$ we have that $\dst\frac{t}{2c}$ goes to $0$ and the value of $\ell(t)$ tends to $c$, thus $\dst\frac{\ell(t)+c}{2c}$ approximates $1$, thus establishing the desired inequality. 
\end{proof}

\begin{remark}\label{maxangle}
Here we  have to note two important things. First, in Proposition \ref{LimInfVF} we can obtain similar inequalities for angles $\measuredangle xyz$ and $yzx$ using the corresponding distance functions $\ell$. Second, observe that angle $\measuredangle yxz$ is a function depending on the geodesics $\alpha$ and $\gamma$, but the function $\ell$ does not depend on $\gamma$. So, if $\max_{\gamma}(\measuredangle yxz)$ denotes the supremum of angles $\measuredangle yxz$ over all the timelike  maximal geodesics $\gamma$ connecting $x$ with $z$ we conclude
\[
\dst\liminf_{t\to 0^{+}} \dst\frac{\ell(t)-\ell(0)}{t} \geq \max_{\gamma}(\measuredangle yxz).
\]
\end{remark}

\begin{theorem}[First variation formula] \label{FVFLocal}
Let $(X,d,\ll,\leq,\tau)$ be a Lorentzian pre-length space with timelike curvature bounded by $0$ and $U$ a comparison neighborhood. Set a timelike geodesic triangle $(x,y,z)$ in $U$ realized by maximal curves $\alpha$, $\beta$, $\gamma$ whose side lengths satisfy timelike size bounds for $0$. For every $0\leq t \leq \tau(x,y)=a$ we define $\ell(t)=\tau(\alpha(t),z)$. Assume that a sequence of future directed causal curves $\{\gamma_i\}_{i=1}^{\infty}$ converges uniformly to $\gamma$, where $\gamma_i(0)=\alpha (t_i)$  for some sequence $\{t_i\}_{i=1}^{\infty}$, $t_i\to 0$ as $i\to \infty$. Then 
\[
\dst\lim_{i\to \infty} \dst\frac{\ell(t_i)-\ell(0)}{t_i} = \measuredangle yxz.
\]
\end{theorem}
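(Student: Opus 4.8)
The plan is to squeeze the limit between a lower bound that is already in hand and a matching upper bound manufactured out of the converging curves $\gamma_i$. Write $c=\tau(x,z)$, so that $\ell(0)=c$, and set $D_i=\tfrac{\ell(t_i)-\ell(0)}{t_i}$. Since $t_i\to 0^+$, Proposition \ref{LimInfVF} gives immediately $\liminf_{i\to\infty}D_i\ge\measuredangle yxz$; hence the whole difficulty lies in establishing the reverse inequality $\limsup_{i\to\infty}D_i\le\measuredangle yxz$.

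To obtain the upper bound I would work with the timelike triangles $(x,\alpha(t_i),z)$, which are genuine for $i$ large because $x\ll z$ forces $\alpha(t_i)\ll z$ once $\alpha(t_i)$ enters the open set $I^-(z)$; their side lengths are $t_i$, $\ell(t_i)$ and $c$. The middle-vertex form of the Law of Cosines (Lemma \ref{MinkowskiLawCosines}) gives $\wangle_{0}x\alpha(t_i)z=\tfrac12\bigl(c^2-t_i^2-\ell(t_i)^2\bigr)$, so that
\[
\frac{\wangle_{0}x\alpha(t_i)z}{t_i\,\ell(t_i)}=-\,D_i\cdot\frac{c+\ell(t_i)}{2\ell(t_i)}-\frac{t_i}{2\ell(t_i)},
\]
where the coefficient $\tfrac{c+\ell(t_i)}{2\ell(t_i)}\to 1$ and $\tfrac{t_i}{2\ell(t_i)}\to 0$ as $i\to\infty$. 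Toponogov's inequality for the middle vertex (Theorem \ref{Toponogov}) reads $\measuredangle x\alpha(t_i)z\le \wangle_{0}x\alpha(t_i)z/(t_i\ell(t_i))$, and thus controls the (always existing) normalized middle-vertex angle in terms of $-D_i$.

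The two remaining ingredients both feed on the hypothesis $\gamma_i\to\gamma$. First, using $\gamma_i$ as the maximal curve joining $\alpha(t_i)$ to $z$, the adjacent-angle inequality of Proposition \ref{SumGreaterCero}, in its form for a point on the side $\alpha$, gives $\measuredangle x\alpha(t_i)z+\measuredangle y\alpha(t_i)z\ge 0$ and at the same time secures the existence and finiteness of the forward angle $\measuredangle y\alpha(t_i)z$. Second, the triangles $(\alpha(t_i),y,z)$, realized by $\alpha|_{[t_i,a]}$, $\beta$ and $\gamma_i$, converge---vertices and realizing curves uniformly---to $(x,y,z)$ realized by $\alpha,\beta,\gamma$, so the semicontinuity of angles (Proposition \ref{SemicontinuityAngles}) yields $\limsup_{i}\measuredangle y\alpha(t_i)z\le\measuredangle yxz$. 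Chaining $-\measuredangle y\alpha(t_i)z\le\measuredangle x\alpha(t_i)z$ with the Toponogov bound and the displayed identity, and discarding the non-positive term $-\tfrac{t_i}{2\ell(t_i)}$, produces the pointwise estimate $D_i\,\tfrac{c+\ell(t_i)}{2\ell(t_i)}\le\measuredangle y\alpha(t_i)z$. Dividing by the positive factor (which tends to $1$) and passing to the upper limit gives $\limsup_i D_i\le\limsup_i\measuredangle y\alpha(t_i)z\le\measuredangle yxz$, which together with the first step forces $\lim_i D_i=\measuredangle yxz$.

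The main obstacle is precisely the sharpness of this upper bound. A direct appeal to the lower timelike curvature condition only bounds $\ell(t_i)$ by its value in the comparison triangle of $(x,y,z)$, hence $D_i$ by the first variation computed there, a quantity that in positively curved situations strictly exceeds the normalized angle $\measuredangle yxz$. Recovering the exact angle requires the limiting direction singled out by the maximal curves $\gamma_i$, and the delicate technical point is to route this single convergence hypothesis simultaneously through Proposition \ref{SemicontinuityAngles} and Proposition \ref{SumGreaterCero} while ensuring that every normalized angle entering the chain exists and is finite.
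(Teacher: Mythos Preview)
Your proposal is correct and follows essentially the same route as the paper: apply Proposition \ref{LimInfVF} for the lower bound, then use the Law of Cosines at the middle vertex of $(x,\alpha(t_i),z)$ together with the Toponogov inequality, Proposition \ref{SumGreaterCero}, and the semicontinuity of angles (Proposition \ref{SemicontinuityAngles}) to obtain the matching upper bound. The only cosmetic difference is that the paper does not name Theorem \ref{Toponogov} explicitly and keeps the term $\tfrac{t_i}{2\ell(t_i)}$ through to the limit rather than discarding it.
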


\begin{proof}
Set $p_i=\alpha(t_i)$ and take a comparison triangle $\triangle \tilde{x}_i\tilde{p}_i\tilde{z}_i$ for $(x,p_i,z)$. Thus, by Lemma \ref{MinkowskiLawCosines} applied to triangle $\triangle \tilde{x}_i\tilde{p}_i\tilde{z}_i$ we get
\[
\measuredangle zp_ix \leq \dst\frac{\measuredangle \tilde{z}_i\tilde{p}_{i} \tilde{x}_i}{\ell(t_i)\cdot t_i} = \dst\frac{c^2-\ell(t_i)^2-t_i^{2} }{2\ell(t_i)\cdot t_i},
\]
therefore
\[
\left( \dst\frac{\ell(t_i)-c}{t_i} \right) \left( \dst\frac{\ell(t_i)+c}{2\ell(t_i)} \right) + \dst\frac{t_i}{2\ell(t_i)} \leq -\measuredangle zp_ix.
\]
Because of Proposition \ref{SumGreaterCero} we conclude 
\[
\left( \dst\frac{\ell(t_i)-c}{t_i} \right) \left( \dst\frac{\ell(t_i)+c}{2\ell(t_i)} \right) + \dst\frac{t_i}{2\ell(t_i)} \leq -\measuredangle zp_ix \leq \measuredangle yp_iz,
\]
hence
\[
\begin{array}{rcl}
\dst\limsup_{i\to\infty} \dst\frac{\ell(t)-\ell(0)}{t_i} &=& \dst\limsup_{i\to\infty} \left[\left( \dst\frac{\ell(t_i)-c}{t_i} \right) \left( \dst\frac{\ell(t_i)+c}{2\ell(t_i)} \right) + \dst\frac{t_i}{2\ell(t_i)}\right] \\
&\leq& \dst\limsup_{i\to\infty} \measuredangle yp_iz \leq \measuredangle yxz,
\end{array}
\]
this last inequality holds because of Proposition \ref{SemicontinuityAngles}. Finally, the result follows from  Proposition \ref{LimInfVF}.
\end{proof}

\begin{remark}
As in Remark \ref{maxangle} we are able to conclude that in fact
\[
\dst\lim_{i\to \infty} \dst\frac{\ell(t_i)-\ell(0)}{t_i} = \max_{\gamma}{\measuredangle yxz},
\]
because 
\[
\dst\limsup_{i\to\infty} \dst\frac{\ell(t)-\ell(0)}{t_i} \leq \measuredangle yxz \leq \max_{\gamma}(\measuredangle yxz).
\]
\end{remark}

We emphasize the first variation formula holds for timelike geodesic triangles of arbitrary size. Recall that in globally hyperbolic length spaces, the time separation $\tau$ is continuous and any two causally related points can be joined by a maximal causal curve.

\begin{theorem}\label{FVFGlobal}
Let $(X,d,\ll ,\leq, \tau)$ be a globally hyperbolic Lorentzian length space with timelike curvature bounded below by $0$. Set a timelike geodesic triangle $(x,y,z)$ in $X$ realized by maximal curves $\alpha$, $\beta$, $\gamma$ whose side lengths satisfy timelike size bounds for $0$. For every $0\leq t \leq \tau(x,y)=a$ we define $\ell(t)=\tau(\alpha(t),z)$. Assume that a sequence of future directed causal curves $\{\gamma_i\}_{i=1}^{\infty}$ converges uniformly to $\gamma$, where $\gamma_i(0)=\alpha (t_i)$ for some sequence $\{t_i\}_{i=1}^{\infty}$, $t_i\to 0$ as $i\to \infty$. Then 
\[
\dst\lim_{i\to \infty} \dst\frac{\ell(t_i)-\ell(0)}{t_i} = \measuredangle yxz.
\]
\end{theorem}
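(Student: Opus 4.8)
The plan is to reduce the global statement to its local counterpart, Theorem \ref{FVFLocal}, by exploiting two facts: global hyperbolicity supplies all the regularity used in the local proof (continuity and finiteness of $\tau$, the Avez--Seifert existence of maximizers, limit curves, and a comparison neighborhood around every point), and both the derivative of $\ell$ at $0$ and the angle $\measuredangle yxz$ are governed by arbitrarily small sub-triangles at the vertex $x$. Since the normalized angle depends only on the initial directions of $\alpha$ and $\gamma$, for any small $c_0>0$ and any $a_0>0$ with $\alpha(a_0)\ll\gamma(c_0)$ (which holds for $a_0$ small because $x\ll\gamma(c_0)$ and $I^-(\gamma(c_0))$ is open) one has $\measuredangle \alpha(a_0)\,x\,\gamma(c_0)=\measuredangle yxz$; and for $a_0,c_0$ small enough the sub-triangle $(x,\alpha(a_0),\gamma(c_0))$ lies in a fixed comparison neighborhood $U$ of $x$ and satisfies timelike size bounds for $0$.

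For the lower bound I would fix such a point $\hat z=\gamma(c_0)\in U$ and set $\hat\ell(t)=\tau(\alpha(t),\hat z)$. Proposition \ref{LimInfVF} applied to the sub-triangle in $U$ gives $\dst\liminf_{t\to0^{+}}\frac{\hat\ell(t)-\hat\ell(0)}{t}\ge\measuredangle yxz$, using $\measuredangle \alpha(a_0)x\hat z=\measuredangle yxz$. Because $\gamma$ is maximal, $\tau(\hat z,z)=\tau(x,z)-c_0$ is constant, and the reverse triangle inequality along $\alpha(t)\ll\hat z\ll z$ yields $\ell(t)\ge\hat\ell(t)+\tau(\hat z,z)$ with equality at $t=0$. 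Hence $\ell(t)-\ell(0)\ge\hat\ell(t)-\hat\ell(0)$, and therefore $\dst\liminf_{i\to\infty}\frac{\ell(t_i)-\ell(0)}{t_i}\ge\measuredangle yxz$.

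For the upper bound I would retrace the proof of Theorem \ref{FVFLocal}. Writing $p_i=\alpha(t_i)$ and $c=\tau(x,z)$, the Law of Cosines \ref{MinkowskiLawCosines} in the comparison triangle $\triangle\tilde x_i\tilde p_i\tilde z_i$ for $(x,p_i,z)$ is a pure computation in $\mathbb{R}^2_1$ and relates the difference quotient, modulo factors tending to $1$, to the comparison angle $\measuredangle\tilde z_i\tilde p_i\tilde x_i=\tfrac{1}{2}(c^2-\ell(t_i)^2-t_i^2)$. The inequality $-\measuredangle z p_i x\le\measuredangle y p_i z$ is Proposition \ref{SumGreaterCero}, which is global. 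The bound $\dst\limsup_{i}\measuredangle y p_i z\le\measuredangle yxz$ is the semicontinuity Proposition \ref{SemicontinuityAngles}; since its proof only compares small sub-triangles at the moving vertices $p_i$ (through $\theta^{0}_{\alpha_i,\gamma_i}(s,t)$ for small $s,t$ and Lemma \ref{ConvergenTriangleMinkowski}), and $p_i\to x$, those sub-triangles sit inside the fixed $U$ for large $i$, so this argument is insensitive to the global size of $(x,y,z)$.

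The one step that genuinely refers to the arbitrarily large triangle is the Toponogov comparison $\measuredangle z p_i x\le\dst\frac{\wangle_{0} z p_i x}{\ell(t_i)\,t_i}$ (Theorem \ref{Toponogov}), and this is the step I expect to be the main obstacle, since Theorem \ref{Toponogov} is only available inside a comparison neighborhood while here $(x,p_i,z)$ need not fit into one. To secure it for a triangle of arbitrary size I would globalize along the maximizer $\gamma$: taking $\hat z_i=\gamma_i(c_0)$ and using additivity $\ell(t_i)=\tau(p_i,\hat z_i)+\tau(\hat z_i,z)=c_0+\tau(\hat z_i,z)$ along the maximal $\gamma_i$, one rewrites $\ell(t_i)-c=\tau(\hat z_i,z)-\tau(\hat z,z)$ and is reduced to a first-variation estimate based at $\hat z$; iterating this across a subdivision $x=w_0\ll w_1\ll\cdots\ll w_n=z$ of $\gamma$ into segments lying in successive comparison neighborhoods, and invoking the locality of the angle at each stage so that the relevant angle stays equal to $\measuredangle yxz$, propagates the local comparison along the whole of $\gamma$. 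Controlling the accumulated error as the subdivision is refined---in effect a Lorentzian Toponogov globalization along a single maximizer---is the crux of the argument. Once this comparison is in hand, combining it with Propositions \ref{SumGreaterCero} and \ref{SemicontinuityAngles} gives $\dst\limsup_{i}\frac{\ell(t_i)-\ell(0)}{t_i}\le\measuredangle yxz$, which together with the lower bound establishes the claimed equality.
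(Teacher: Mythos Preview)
Your lower bound argument is essentially the paper's: truncate $\gamma$ at a point $\hat z=z_0\in U$, apply the local result to the sub-triangle (the paper invokes Theorem \ref{FVFLocal} rather than Proposition \ref{LimInfVF}, but either works), and use the reverse triangle inequality $\ell(t)\ge\hat\ell(t)+\tau(\hat z,z)$ with equality at $t=0$.

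For the upper bound you correctly isolate the obstacle---Theorem \ref{Toponogov} is only available inside a comparison neighborhood---but you overcomplicate its resolution. No subdivision of $\gamma$ and no Toponogov globalization is needed. The paper's device is a single truncation of the \emph{variation} curves: for a fixed small $r>0$, pick $z_i\in\gamma_i$ with $\tau(p_i,z_i)=r$, so that the small triangle $(x,p_i,z_i)$ lies in $U$ for large $i$. Local Toponogov applied at the vertex $p_i$ gives $\measuredangle z_ip_ix\le\dst\frac{c_i^{2}-r^{2}-t_i^{2}}{2rt_i}$ with $c_i=\tau(x,z_i)$, and then Proposition \ref{SumGreaterCero} together with $\measuredangle y p_i z_i=\measuredangle y p_i z$ (since $z_i$ lies on $\gamma_i$, the angle depends only on initial directions) bounds this by $\measuredangle y p_i z$. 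The passage from the local quantity $r-c_i$ to the global $\ell(t_i)-c$ is the single elementary inequality
\[
\ell(t_i)-c\le r-c_i:
\]
indeed $\gamma_i$ is maximal, so $\ell(t_i)=r+\tau(z_i,z)$, while the reverse triangle inequality along $x\ll z_i\ll z$ gives $c\ge c_i+\tau(z_i,z)$. One then concludes with Proposition \ref{SemicontinuityAngles} exactly as you describe. Your proposed iteration across a subdivision of $\gamma$, with its attendant error control, is therefore unnecessary; the whole globalization collapses to one application of the reverse triangle inequality.
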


\begin{proof}
The proof is divided in two parts. First, let us take $U$ a comparison neighborhood with respect to $\mathbb{R}^{2}_{1}$ around $x$. Fix to points $y_0\in\alpha $ and $z_0\in\gamma$ in $U$ such that $y_0\ll z_0$ and define $\ell_{0}(t)=\tau(\alpha(t),z_0)$ for $0\leq t \leq \tau(x,y_0)$. Then by Theorem \ref{FVFLocal} we have
\[
\dst\lim_{i\to\infty} \dst\frac{\ell_0(t_i)-\ell_0(0)}{t_i} = \measuredangle y_0xz_0 = \measuredangle yxz.
\]
Now observe that $\ell(t_i)\geq \ell_0(t_i)+\tau(z_0,z)$, then
\[
\dst\frac{\ell(t_i)-\tau(x,z)}{t_i} \geq \dst\frac{\ell_0(t_i)-(\tau(x,z)-\tau(z_0,z))}{t_i},
\]
and therefore
\[
\dst\lim_{i\to\infty} \dst\frac{\ell(t_i)-\ell(0)}{t_i}\geq 
\dst\lim_{i\to\infty} \dst\frac{\ell_0(t_i)-\ell_0(0)}{t_i} = \measuredangle yxz.
\]
For the second part and using de continuity of $\tau$ with respect to $d$ take a large enough $i$ such that $\alpha(t_i)\in U$ and $z_i\in \gamma(i)$ with $\tau(\alpha(t_i),z_i)=r$ for a fix $r>0$. Let $(\bar{x}_i,\bar{p}_i,\bar{z}_i)$ a comparison triangle in $\mathbb{R}^{2}_{1}$ for triangle $(x,\alpha(t_i),z_i)$. Set $c_i=\tau(x,z_i)$, then by curvature conditions in $U$ and applying The Law of Cosines in $\mathbb{R}^{2}_{1}$ we have
\[
\measuredangle z_i\alpha(t_i)x \leq \dst\frac{\measuredangle \bar{x}_i\bar{p}_i\bar{z}_i}{rt_i} = \dst\frac{c_i^2-r^2-t_i^2}{2rt_i},
\]
which is equivalent to
\[
\left(\dst\frac{r-c_i}{t_i}\right) \left(\dst\frac{r+c_i}{2r}\right) + \dst\frac{t_i}{2r} \leq -\measuredangle z_i\alpha(t_i)x \leq \measuredangle y\alpha(t_i)z_i = \measuredangle y\alpha(t_i)z,
\]
this last inequalities on behalf of Proposition \ref{SumGreaterCero}. On the other hand, since $x\ll \alpha(t_i)\ll z_i \ll z$ we obtain $(\ell(t_i)-r)+ c_i\leq c$ and $r\leq c_i$, thus
\[
\left(\dst\frac{\ell(t_i)-c}{t_i}\right)\left(\dst\frac{r+r}{2r}\right) + \dst\frac{t_i}{2r} \leq \left(\dst\frac{r-c_i}{t_i}\right)\left(\dst\frac{r+c_i}{2r}\right) + \dst\frac{t_i}{2r} \leq \measuredangle y\alpha(t_i)z.
\]
When $t_i\to 0$ and using Proposition \ref{SemicontinuityAngles} we have
\[
\dst\lim\sup_{i\to \infty} \dst\frac{\ell(t_i)-c}{t_i} \leq \dst\lim\sup_{i\to \infty} \measuredangle y\alpha(t_i)z \leq \measuredangle yxz,
\]
and we are done.
\end{proof}

\begin{remark}
Here we have the same situations as in Remarks \ref{maxangle}, namely
\[
\dst\lim_{i\to \infty} \dst\frac{\ell(t_i)-\ell(0)}{t_i} = \max_{\gamma}(\measuredangle yxz).
\]
\end{remark}

\section*{Acknowledgements}
W. Barrera was partially supported by Conacyt under grants SNI 45382 and Ciencia de Frontera 21100. D. Solis was partially supported by Conacyt under grant SNI 38368. The authors are very thankful to T. Beran for insightful comments on an earlier version of this work. The authors are very thankful to the organizers of \emph{SCRI21, a tribute to Roger Penrose} for this outstanding event.




\vspace{1cm}

\noindent \textbf{Waldemar Barrera}. Facultad de Matem\'aticas, Universidad Aut\'onoma de Yucat\'an, Perif\'erico Norte Tablaje 13615,  C.P. 97110, M\'erida, M\'exico. \\
bvargas@correo.uady.mx

\vspace{.3cm}


\noindent \textbf{Didier A. Solis}.  Facultad de Matem\'aticas, Universidad Aut\'onoma de Yucat\'an, Perif\'erico Norte Tablaje 13615,  C.P. 97110, M\'erida, M\'exico. \\
didier.solis@correo.uady.mx

\vspace{.3cm}

\noindent \textbf{Luis M. Montes de Oca}. Facultad de Matem\'aticas, Universidad Aut\'onoma de Yucat\'an, Perif\'erico Norte Tablaje 13615,  C.P. 97110, M\'erida, M\'exico. \\
mauricio.montesdeoca@alumnos.uady.mx

\end{document}